\documentclass[11pt]{article}

\usepackage{enumerate}
\usepackage{amsmath}
\usepackage{amssymb,latexsym}
\usepackage{amsthm}
\usepackage{color}
\usepackage{cancel}
\usepackage{graphicx}

\usepackage{hyperref}

\usepackage{amscd}

\DeclareMathOperator{\C}{\mathcal{C}}

\newtheorem{theorem}{Theorem}[section]

\newtheorem{corollary}[theorem]{Corollary}

\newtheorem{proposition}[theorem]{Proposition}

\newtheorem{remark}[theorem]{Remark}

\newcommand{\fqn}{\mathbb{F}_{q^n}}

\newcommand{\cC}{{\mathcal C}}

\newcommand{\cL}{{\mathcal L}}

\newcommand{\F}{{\mathbb F}}

\newcommand{\Tr}{{\mathrm Tr}}

\newcommand{\K}{{\mathbb K}}
\newcommand{\fq}{{\mathbb F}_{q}}

\newcommand{\la}{\langle}
\newcommand{\ra}{\rangle}

\newcommand{\PG}{\mathrm{PG}}
\newcommand{\N}{\mathrm{N}}


\title{On the intersection problem for linear sets in the projective line}

\author{Giovanni Zini and Ferdinando Zullo\thanks{
The second author was supported by the project ''VALERE: VAnviteLli pEr la RicErca" of the University of Campania ''Luigi Vanvitelli''.
The research was also supported by the Italian National Group for Algebraic and Geometric Structures and their Applications (GNSAGA
- INdAM). }}
\date{}

\begin{document}
\maketitle

\begin{abstract}
The aim of this paper is to investigate the intersection problem between two linear sets in the projective line over a finite field.
In particular, we analyze the intersection between two clubs with eventually different maximum fields of linearity.
Also, we analyze the intersection between the linear set defined by the polynomial $\alpha x^{q^k}+\beta x$ and other linear sets having the same rank; this family contains the linear set of pseudoregulus type defined by $x^q$.
The strategy relies on the study of certain algebraic curves whose rational points describe the intersection of the two linear sets.
Among other geometric and algebraic tools, function field theory and the Hasse-Weil bound play a crucial role.
As an application, we give asymptotic results on semifields of BEL-rank two.
\end{abstract}

\bigskip
{\it AMS subject classification:} 51E20, 05B25, 51E22

\bigskip
{\it Keywords:} Linear set, linearized polynomial, algebraic curve

\section{Introduction}

Let $q$ be a prime power and $r,n$ be two positive integers.
Consider two $\F_q$-linear sets $L_1$ and $L_2$ in the projective space $\PG(r,q^n)$. Clearly, $L_1 \cap L_2$ is still an $\F_q$-linear set of $\PG(r,q^n)$, whenever $L_1 \cap L_2$ is non-empty.
Hence, the {\bf intersection problem} can be formulated as follows:
\begin{enumerate}
    \item do $L_1$ and $L_2$ meet in at least one point?
    \item if $L_1\cap L_2\ne\emptyset$, what is the size of $L_1\cap L_2$?
\end{enumerate}
An answer to these questions turn out to be difficult in general.
The intersection problem for linear sets has been investigated in the following cases:
\begin{itemize}
    \item two subspaces;
    \item two subgeometries, see \cite{DD2008};
    \item one $\F_q$-linear set with one $\F_q$-subline in $\PG(1,q^n)$, see \cite{LVdV2010,Pepe};
    \item two scattered linear sets of rank $n+1$ in $\PG(2,q^n)$, \cite{DD2014};
    \item one scattered linear set of rank $3n$ with either one line or one plane in $\PG(2n-1,q^3)$, see \cite{LVdV2013};
    \item two clubs, see \cite{ShVVdV}.
\end{itemize}

The intersection problem for linear sets appears in several contexts, such as blocking sets, KM-arcs, PN-functions, irreducible polynomials and semifields, see e.g. \cite{Card,DeBoeckVdV2016,Lavrauw,Polverino,ShVVdV}.

This paper is devoted to the problem of intersecting two linear sets of rank $n$ in the projective line $\PG(1,q^n)$.
The aim is to fix one linear set $L_1$ and then to provide sufficient conditions on $L_2$ ensuring at least one common point.
More precisely, fix a $q$-polynomial $g(x)$ over $\F_{q^n}$ and let $f(y)$ be another $q$-polynomial over $\F_{q^n}$ and  $h$ be a non negative integer.
Consider the following $\F_q$-linear sets:
\[ L_g=\{ \langle (x,g(x)) \rangle_{\F_{q^n}} \colon x \in \F_{q^{n}}^* \}, \quad  L_f=\{ \langle (y^{q^{h}},f(y)) \rangle_{\F_{q^n}} \colon y \in \F_{q^{n}}^* \}, \]
\[ L_f'=\{ \langle (f(y),y^{q^{h}}) \rangle_{\F_{q^n}} \colon y \in \F_{q^{n}}^* \}. \]
We translate the intersection problem into the study of certain algebraic curves, as follows.
\begin{itemize}
    \item $L_g\cap L_f \ne \emptyset$ if and only if the curve with plane model
    \[\frac{g(X)}X-\frac{f(Y)}{Y^{q^h}}=0 \]
    has at least one $\F_{q^n}$-rational affine point with nonzero coordinates.
    \item $L_g\cap L_f' \ne \emptyset$ if and only if the curve with plane model
    \[\frac{g(X)}X-\frac{Y^{q^h}}{f(Y)}=0 \]
    has at least one $\F_{q^n}$-rational affine point with nonzero coordinates.
\end{itemize}
We then use algebraic and geometric machinery, such as polynomial manipulations, extensions of function fields (see Theorems \ref{th:kummer} and \ref{th:artinschreier}) and the Hasse-Weil bound (see Theorem \ref{th:hasseweil}), to obtain conditions involving $\deg f$, $\deg g$, $h$ and $n$, which ensure nonempty intersection between $L_g$ and either $L_f$ or $L_f'$.

The use of algebraic curves in the study of linear sets has already proved in the literature to be fruitful to get classification and asymptotic results; see e.g. \cite{BM,BZ,BZ2}.

The paper is organized as follows.
The original results are resumed in Section \ref{sec:results}.
Section \ref{sec:prelim} recalls  preliminary results and tools about linear sets (Section \ref{sec:linearset}) and algebraic curves (Section \ref{sec:algebraiccurve}).
Section \ref{sec:intersections} contains the main results of the paper. In particular, Section \ref{sec:clubs} deals with the intersection of two clubs, namely  $h=0$, $g(x)=\mathrm{Tr}_{q^n/q^{r_1}}(x)$, and $f(y)=\alpha \mathrm{Tr}_{q^n/q^{r_2}}(y)$. Section \ref{sec:binomial} investigates the case $g(x)=\alpha x^{q^k}+\beta x$; note that, when $g(x)=x^q$, $L_g$ is a linear set of so-called pseudoregulus type.
Finally, Section \ref{sec:semifields} applies results of Section \ref{sec:intersections} to give asymptotic results about semifields with BEL-rank two.

\subsection{Original results of the paper}\label{sec:results}
Firstly, we study the intersection between two distinct clubs in $\PG(1,q^n)$, that is, we choose $h=0$, $g(x)=\mathrm{Tr}_{q^n/q^{r_1}}(x)$ and $f(y)=\alpha \mathrm{Tr}_{q^n/q^{r_2}}(y)$, with $r_1,r_2$ two positive integers such that $r_1,r_2 \mid n$ and $\alpha \in \F_{q^n}^*$.
Without loss of generality, we may assume $(r_1,r_2)=1$ (see Remark \ref{rk:d=1}).
With this notation, we prove the following results.
\begin{itemize}
    \item (Theorem \ref{th:primo}) If there exists $a\in \F_{q^n}$ such that $\mathrm{Tr}_{q^n/q^{r_1}}(a)=-1$ and $\mathrm{Tr}_{q^n/q^{r_2}}(\alpha a)=1$, or such that $\mathrm{Tr}_{q^n/q^{r_2}}(a)=-1$ and $\mathrm{Tr}_{q^n/q^{r_1}}(a/\alpha)=1$,
then $|L_g\cap L_f|\geq 2$.
\item (Theorem \ref{th:secondo}) If $\alpha=a\cdot b$, with $a \in \F_{q^{r_1}}$ and $b \in \F_{q^{r_2}}$, then $|L_{g} \cap L_{f}|\geq 2$  if and only if there exist
$\gamma_1,\gamma_2\in\F_{q^n}$ such that $\mathrm{Tr}_{q^n/q^{r_1}}(\gamma_1)=\mathrm{Tr}_{q^n/q^{r_2}}(\gamma_2)=1$ and $\mathrm{Tr}_{q^n/q}(a\gamma_1-\frac{\gamma_2}{b})=0$.
\item (Theorem \ref{th:Cr1r2}) If $r_1+r_2+1 \leq \frac{n}2$, then $|L_g\cap L_f'|\geq 1$.
\end{itemize}

Secondly, we study the case $g(x)=\alpha x^{q^k}+\beta x$, with $\alpha,\beta\in\F_{q^n}$, $\alpha\ne0$, $k\geq1$.
Write $f(y)=\sum_{i=0}^d a_i y^{q^i}$, with $d<n$, $a_d\ne0$. Define $\ell=\min\{i\colon a_i\ne0\}$; when $f(y)$ is not a monomial, define \[\ell_2=\min\{i>\ell\colon a_i\ne0\},\quad \ell_3=\max\{i<d\colon a_i\ne0\}.\]
If $h=d$, define $t=\ell$; if $h=\ell$, define $t=d$.
We prove the following necessary and sufficient conditions.
\begin{itemize}
    \item (Proposition \ref{prop:h=dmon} and Theorem \ref{th:mon}) Let $f(y)$ be a monomial and suppose $h=d$ or $\beta=0$. Then $L_g\cap L_f\ne\emptyset$ if and only if $\mathrm{N}_{q^n/q^e}\left(\frac{a_d-\beta}{\alpha}\right)=1$, where $e=\gcd(n,k,d-h)$.
    \item (Proposition \ref{prop:sabbassa}) Let $f(y)$ be a binomial and suppose that either $d=h$ and $a_d=\beta$, or $\ell=h$ and $a_\ell=\beta$. Then $L_g\cap L_f\ne\emptyset$ if and only if $\mathrm{N}_{q^n/q^e}\left(\frac{a_t}{\alpha}\right)=1$, where $e=\gcd(n,k,t-h)$.
    \item (Proposition \ref{prop:pointinv}) Let $f(y)$ be a monomial and suppose $h=d$ or $\beta=0$. Then $L_g\cap L_f^\prime\ne\emptyset$ if and only if $\mathrm{N}_{q^n/q^e}\left(\frac{1-\beta a_d}{\alpha a_d}\right)=1$, where $e=\gcd(n,k,d-h)$.
\end{itemize}
For the remaining cases of $f(y)$, we prove that $L_g\cap L_f\ne\emptyset$ if one of the following conditions holds.
\begin{itemize}
    \item (Theorem \ref{th:mon}) $f(y)$ is a monomial, $h\ne d$, $\beta\ne0$ and $k+|d-h|\leq n/2$.
    \item (Theorem \ref{th:h<ell_dritto}) $f(y)$ is not a monomial; $h\leq\ell$; when $f(y)$ is a binomial and $h=\ell$, assume $a_h\ne\beta$; and
    \[
\max\left\{ k+d-h-m_h,\frac{d-h}{2}\right\}\leq\begin{cases} \frac{n}{2} & \textrm{if}\quad m_h\leq\frac{d-h}{2}, \\ \frac{n}{2}-1 & \textrm{if}\quad m_h>\frac{d-h}{2}, \end{cases}
\]
 where
    \[
m_h=\begin{cases}
0,&\textrm{if}\quad a_h\ne\beta, \\
\ell-h,&\textrm{if}\quad a_h=\beta=0, \\
\ell_2-h,&\textrm{if}\quad a_h=\beta\ne0. \\
\end{cases}
\]
    \item (Theorem \ref{th:h>ell}) $f(y)$ is not a monomial; $h>\ell$; when $f(y)$ is a binomial and $h=\ell$, assume $a_h\ne\beta$; and $k+m_h-\ell\leq n/2$, where
    \[
 m_h=\begin{cases} \max\{d,h\} & \textrm{if}\quad a_d\ne\beta\;\textrm{ or }\;d\ne h, \\ \ell_3 & \textrm{if}\quad a_d=\beta\;\textrm{ and }\; d=h. \end{cases}
\]
\end{itemize}
Also, $L_g\cap L_f^\prime\ne\emptyset$ if one of the following cases holds.
\begin{itemize}
    \item (Proposition \ref{prop:pointinv}) $f(y)$ is a monomial, $h\ne d$, $\beta\ne0$ and $k+|d-h|\leq n/2$.
    \item (Theorem \ref{th:h<ell_girato}) $f(y)$ is not a monomial, $h\leq\ell$ and $k+d-\min\{m_h,\ell\}+1\leq n/2$, where
    \[
m_h=\begin{cases}
\ell_2 & \textrm{if}\quad \beta a_h=1, \\ h & \textrm{if}\quad \beta\ne0\quad \textrm{and}\quad\beta a_h\ne1, \\
d & \textrm{if}\quad \beta=0.
\end{cases}
\]
    \item (Theorem \ref{th:h>lrib}) $f(y)$ is not a monomial, $h>\ell$ and $k+\max\{m_h,d\}-\ell+1\leq n/2$, where
\[
m_h=\begin{cases}
\ell, & \textrm{if}\quad \beta=0;\\
\max\{d,h\}, & \textrm{if}\quad \beta\ne0\quad \textrm{and either}\quad h\ne d\quad\textrm{or}\quad \beta a_h\ne1;\\
\ell_3,&\textrm{if}\quad h=d\quad\textrm{and}\quad \beta a_h=1.\\
\end{cases}
\]
\end{itemize}

As pointed out in Remark \ref{rem:aggiunto}, we make use of the adjoint operation on $f(y)$ to prove Corollaries \ref{cor:h<ell_dritto_aggiunto}, \ref{cor:h>elldual}, \ref{cor:rovultimo}, \ref{cor:ultimissimo}, where sufficient conditions for $L_g\cap L_f\ne\emptyset$ and $L_g\cap L_f^\prime\ne\emptyset$ are obtained as the application of the results listed above.

As a consequence of the above results, we can describe the intersection between the linear set $L_{x^q}=\{\langle (x,x^q) \rangle_{\F_{q^n}} \colon x \in \F_{q^n}^*\}$ of pseudoregulus type and the linear sets $L_f$ and $L_f^\prime$.

\smallskip

If $f(y)$ is a monomial, then $L_{x^q}\cap L_{f}\ne\emptyset$ is equivalent to $\mathrm{N}_{q^n/q}(a_d)=1$, and this is also equivalent to $L_{x^q}\cap L_f^\prime\ne\emptyset$.

\smallskip

We have $L_{x^q}\cap L_f\ne\emptyset$ whenever $f(y)$ is not a monomial and one of the following conditions is satisfied.

\noindent{\bf Condition (I):}  $h\leq\ell$ and one of the following cases holds.

\begin{itemize}
    \item[(I.1)] $\max\left\{ d-h+1-m_h,\frac{d-h}{2}\right\}\leq\begin{cases} \frac{n}{2} & \textrm{if}\quad m_h\leq\frac{d-h}{2}, \\ \frac{n}{2}-1 & \textrm{if}\quad m_h>\frac{d-h}{2}, \end{cases}$ \\
        where $m_h=\begin{cases}
0,&\textrm{if}\quad a_h\ne0, \\
\ell-h,&\textrm{if}\quad a_h=0. \\
\end{cases}
$
    \item[(I.2)] $h=\ell=0$ and $\ell_2-1\geq n/2$.
\end{itemize}
\noindent{\bf Condition (II):} $h>\ell$ and one of the following cases holds.
\begin{itemize}
    \item[(II.1)] $\max\{d,h\}-\ell+1\leq n/2$.
    \item[(II.2)] $\ell\ne0$, $h\geq d$, and $\max\left\{ h-\ell+1-\hat{m}_h,\frac{h-\ell}{2}\right\}\leq\begin{cases} \frac{n}{2} & \textrm{if}\quad \hat{m}_h\leq\frac{h-\ell}{2}, \\ \frac{n}{2}-1 & \textrm{if}\quad \hat{m}_h>\frac{h-\ell}{2}, \end{cases}$ \\
        where $\hat{m}_h=\begin{cases}
0,&\textrm{if}\quad a_h\ne0, \\
h-d,&\textrm{if}\quad a_h=0. \\
\end{cases}$
    \item[(II.3)] $\ell=0$ and   $\min\{h,\ell_2\}-1\geq n/2$.
\end{itemize}

We have $L_{x^q}\cap L_f^\prime\ne\emptyset$ whenever $f(y)$ is not a monomial and one of the following conditions is satisfied: either $d-\ell+2\leq n/2$, or $\ell=0$ and $n-\ell_2+2\leq n/2$.


\section{Preliminaries results and tools}\label{sec:prelim}

Through this paper, $q$ is a power of a prime $p$, $\F_q$ denotes the finite field of order $q$, $\F_{q^n}$ denotes its finite extension of degree $n\geq 2$ and $\K$ is its algebraic closure.
Also, if $a,b,c$ are integers, we denote by $(a,b)$ (resp. $(a,b,c)$) the greatest common divisor of $a$ and $b$ (resp. of $a$, $b$, and $c$), taken as a positive integer.

\subsection{Linear sets}\label{sec:linearset}

Let $\Lambda=\PG(V,\F_{q^n})=\PG(1,q^n)$, where $V$ is a vector space of dimension $2$ over $\F_{q^n}$.
A point set $L$ of $\Lambda$ is said to be an \emph{$\F_q$-linear set} of $\Lambda$ of rank
$k$ if $L$ is
defined by the non-zero vectors of a $k$-dimensional $\F_q$-vector subspace $U$ of $V$, i.e.
\[L=L_U=\{\la {\bf u} \ra_{\mathbb{F}_{q^n}} \colon {\bf u}\in U\setminus \{{\bf 0} \}\}.\]
The number of points of $L_U$ is upper bounded by $\frac{q^k-1}{q-1}$ and $L_U$ is called \emph{scattered} if it has the maximum number of points (w.r.t. its rank).
Also, recall that the \emph{weight of a point} $P=\langle \mathbf{u} \rangle_{\F_{q^n}}$ is $w_{L_U}(P)=\dim_{\F_q}(U\cap\langle \mathbf{u} \rangle_{\F_{q^n}})$. A linear set is scattered if and only if each of its points has weight one.

Let $L_U$ be an $\F_q$-inear set of rank $n$ of $\PG(1,q^n)$. Up to projectivity, we can assume that $\la (0,1) \ra_{\F_{q^n}}\notin L_U$. Then $U=U_f:=\{(x,f(x))\colon x\in \F_{q^n}\}$ for some $q$-polynomial $f(x)=\sum_{i=0}^{n-1}a_ix^{q^i}$; we consider $f(x)$ as an element of $\tilde{\cL}_{n,q}=\cL_{n,q}/(x^{q^n}-x)$, where $\cL_{n,q}\subset\F_{q^n}[x]$ is the $\F_q$-algebra of $q$-polynomials. In this case, we denote $L_U$ by $L_f$.

For any positive divisor $r$ of $n$, let $\mathrm{Tr}_{q^n/q^r}(x)=\sum_{i=0}^{n/r-1} x^{q^{ir}}$ and $\mathrm{N}_{q^n/q^r}(x)=x^{\frac{q^n-1}{q^r-1}}$.
Consider the non-degenerate symmetric bilinear form of $\F_{q^n}$ over $\F_q$ defined for every $x,y \in \F_{q^n}$ by
\begin{equation}\label{eq:bilform} \la x,y\ra= \mathrm{Tr}_{q^n/q}(xy). \end{equation}
\noindent Then the \emph{adjoint} $\hat{f}$ of the linearized polynomial $\displaystyle f(x)=\sum_{i=0}^{n-1} a_ix^{q^i} \in \tilde{\mathcal{L}}_{n,q}$ with respect to the bilinear form $\la\cdot,\cdot\ra$ is
\[ \hat{f}(x)=\sum_{i=0}^{n-1} a_i^{q^{n-i}}x^{q^{n-i}}, \]
i.e.
\[ \mathrm{Tr}_{q^n/q}(xf(y))=\mathrm{Tr}_{q^n/q}(y\hat{f}(x)), \]
for every $x,y \in \F_{q^n}$.

\begin{proposition}\label{prop:adjoint}{\rm (\cite[Lemma 2.6]{BGMP2015},\cite[Lemma 3.1]{CsMP})}
Let $L_f$ be an $\F_q$-linear set of rank $n$ in $\PG(1,q^n)$, with $f(x)\in \tilde{\mathcal{L}}_{n,q}$, and let $\hat{f}(x)$ be its adjoint w.r.t. the bilinear form \eqref{eq:bilform}. Then $L_f=L_{\hat{f}}$.
\end{proposition}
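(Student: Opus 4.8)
The plan is to realise both $L_f$ and $L_{\hat f}$ as the linear sets of two $\F_q$-subspaces that are orthogonal to one another with respect to a well-chosen symplectic form on $V$, and then to show that passing to the orthogonal space preserves the weight of every point. Write $V=\F_{q^n}\times\F_{q^n}$ and introduce the $\F_q$-bilinear form
\[ \psi\big((x_1,y_1),(x_2,y_2)\big)=\mathrm{Tr}_{q^n/q}(x_1y_2-y_1x_2). \]
This $\psi$ is alternating, since it vanishes on the diagonal, and nondegenerate, since $\mathrm{Tr}_{q^n/q}$ is nondegenerate and $(x_1,y_1),(x_2,y_2)\mapsto x_1y_2-y_1x_2$ is a nondegenerate $\F_{q^n}$-symplectic form. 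For an $\F_q$-subspace $W\subseteq V$ let $W^\perp$ denote its orthogonal space with respect to $\psi$. Recall $U_f=\{(x,f(x))\colon x\in\F_{q^n}\}$ and, analogously, $U_{\hat f}=\{(x,\hat f(x))\colon x\in\F_{q^n}\}$, both of $\F_q$-dimension $n=\tfrac12\dim_{\F_q}V$.

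The first step is to identify $U_f^\perp=U_{\hat f}$. A pair $(a,b)$ lies in $U_f^\perp$ exactly when $\mathrm{Tr}_{q^n/q}(xb-f(x)a)=0$ for every $x\in\F_{q^n}$. Invoking the defining property of the adjoint, $\mathrm{Tr}_{q^n/q}(af(x))=\mathrm{Tr}_{q^n/q}(x\hat f(a))$, this rewrites as $\mathrm{Tr}_{q^n/q}\big(x(b-\hat f(a))\big)=0$ for all $x$, which by nondegeneracy of the trace form forces $b=\hat f(a)$. Hence $U_f^\perp=U_{\hat f}$.

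The key observation is that every projective point corresponds to a Lagrangian subspace. If $P=\langle v\rangle_{\F_{q^n}}$ and $W\subseteq V$ is the corresponding $1$-dimensional $\F_{q^n}$-subspace, which has $\F_q$-dimension $n$, then $\psi$ vanishes identically on $W$ because an $\F_{q^n}$-symplectic form vanishes on any $1$-dimensional $\F_{q^n}$-subspace; thus $W\subseteq W^\perp$, and comparing dimensions ($\dim_{\F_q}W=n$ and $\dim_{\F_q}W^\perp=2n-n=n$) yields $W=W^\perp$. Now the weight of $P$ can be computed on either side:
\[ \dim_{\F_q}(U_{\hat f}\cap W)=\dim_{\F_q}(U_f^\perp\cap W^\perp)=\dim_{\F_q}\big((U_f+W)^\perp\big)=2n-\dim_{\F_q}(U_f+W), \]
whereas Grassmann's identity gives $\dim_{\F_q}(U_f\cap W)=\dim_{\F_q}U_f+\dim_{\F_q}W-\dim_{\F_q}(U_f+W)=2n-\dim_{\F_q}(U_f+W)$ as well. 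Therefore $w_{L_{\hat f}}(P)=w_{L_f}(P)$ for every $P$, so in particular $P\in L_f$ (weight at least one) if and only if $P\in L_{\hat f}$, which proves $L_f=L_{\hat f}$.

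I expect the only genuinely delicate points to be the choice of the correct symplectic form—one making each projective point self-orthogonal—and the identification $U_f^\perp=U_{\hat f}$ via the adjoint relation; once $\psi$ is fixed, the weight comparison is a formal application of $(A+B)^\perp=A^\perp\cap B^\perp$, the equality $W=W^\perp$, and Grassmann's formula. The remaining verifications, namely the nondegeneracy and alternating property of $\psi$ and the dimension counts, are routine.
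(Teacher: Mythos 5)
Your proof is correct. The paper does not supply its own argument for this proposition---it is quoted from \cite[Lemma 2.6]{BGMP2015} and \cite[Lemma 3.1]{CsMP}---but the route you take (identifying $U_{\hat f}$ as the orthogonal complement $U_f^\perp$ under the alternating form $\mathrm{Tr}_{q^n/q}(x_1y_2-y_1x_2)$, noting that each $1$-dimensional $\F_{q^n}$-subspace is Lagrangian, and comparing weights via $(U_f+W)^\perp=U_f^\perp\cap W^\perp$ and Grassmann) is exactly the standard argument behind those cited lemmas, and it in fact yields the slightly stronger conclusion that $L_f$ and $L_{\hat f}$ have the same weight at every point, not merely the same point set. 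All the individual steps (nondegeneracy and alternation of $\psi$, the computation $U_f^\perp=U_{\hat f}$ from the adjoint identity, and the dimension counts) check out.
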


Recently, the class of scattered linear sets has been intensively studied because of their connections with MRD-codes, see \cite{CsMPZ2019,CSMPZ2016,Lunardon2017,Sheekey2016,ShVdV}.
One of the first families of scattered linear sets was found by Blokhuis and Lavrauw in  \cite{BL2000}. They are known as linear sets of \emph{pseudoregulus type} and
 can be defined as any linear set which is
 $\mathrm{P\Gamma L}(2,q^n)$-equivalent to
\[ L_{p}=\{\la (x,x^q) \ra_{\mathbb{F}_{q^n}} \colon x \in \F_{q^n}^*\}, \]
see \cite[Section 4]{LuMaPoTr2014}.
A further important class of linear sets is given by the clubs.
A \emph{club} $L_U$ in $\PG(1,q^n)$ is an $\F_q$-linear set of rank $n$ such that all but one point of $L_U$ have weight one, and one point (called the \emph{head} of $L_U$) has weight $n-1$, see \cite{Szi}.
Clubs appear in the construction of KM-arcs, see \cite{DeBoeckVdV2016}.

For further details on linear sets we refer to \cite{Lavrauw,LVdV2015,Polverino}.

\subsection{Algebraic curves and function fields}\label{sec:algebraiccurve}

Let $\mathbb{L}$ be either $\mathbb{F}_{q^n}$ or $\mathbb{K}=\overline{\F}_{q^n}$; hence, $\mathbb{L}$ is a perfect field.
An algebraic function field in one variable (briefly a function field) over $\mathbb{L}$ is an extension field $\mathbb{F}$ of $\mathbb{L}$ with transcendence degree $1$.
Details on the theory of function fields may be found in \cite{Sti}, to which we refer for basic definitions and results.
We assume that $\mathbb{L}$ is the full constant field of $\mathbb{F}$, i.e. $\mathbb{L}$ is algebraically closed in $\mathbb{F}$.
We denote by $\mathbb{P}(\mathbb{F})$ the set of places of $\mathbb{F}$, and by $v_P(z)\in\mathbb{Z}$ the valuation of $z\in\mathbb{F}$ at $P\in\mathbb{P}(\mathbb{F})$.
We denote by $\deg(P)$ the degree of $P\in\mathbb{P}(\mathbb{F})$, i.e. the degree $\left[\mathbb{F}_P\colon\mathbb{L}\right]$, where $\mathbb{F}_P$ is the residue class field of $P$; if $\mathbb{L}=\mathbb{K}$, then $\deg(P)=1$.
For any finite extension $\mathbb{F}^\prime$ of $\mathbb{F}$, we write $P^\prime\mid P$ when the place $P^\prime\in\mathbb{P}(\mathbb{F}^\prime)$ lies over the place $P\in\mathbb{P}(\mathbb{F})$, and we denote by $e(P^\prime\mid P)\geq1$ the ramification index of $P^\prime$ over $P$; if $e(P^\prime\mid P)=1$ for every $P^\prime$ lying over $P$, then $P$ is said to be unramified in $\mathbb{F}^\prime\colon\mathbb{F}$.

We recall two important types of extensions of function fields over $\mathbb{L}$.

\begin{theorem}\label{th:kummer}{\rm \cite[Corollary 3.7.4]{Sti}}
Let $m$ be a positive integer such that $p\nmid m$ and $\mathbb{L}$ contains a primitive $m$-th root of unity.
Let $u\in\mathbb{F}$ be such that there exists $Q\in\mathbb{P}(\mathbb{F})$ satisfying $(m,v_Q(u))=1$.
Let $\mathbb{F}^\prime=\mathbb{F}(x)$, where $x$ is a root of $\Phi(T)=T^m-u\in\mathbb{F}[T]$.
Then
\begin{itemize}
    \item $\Phi(T)$ is irreducible over $\mathbb{F}$, and the field extension $\mathbb{F}^\prime\colon\mathbb{F}$ of degree $m$ is called a \emph{Kummer extension};
    \item for any $P\in\mathbb{P}(\mathbb{F})$ and $P^\prime\in\mathbb{P}(\mathbb{F}^\prime)$ with $P^\prime\mid P$, we have $e(P^\prime\mid P)=m/r_P$, where $r_P=(m,v_P(u))$;
    \item $\mathbb{L}$ is the full constant field of $\mathbb{F}^\prime$;
    \item the genera $g$ and $g^\prime$ of $\mathbb{F}$ and $\mathbb{F}^\prime$ (respectively) satisfy
    \[g^\prime=1+m(g-1)+\frac{1}{2}\sum_{P\in\mathbb{P}(\mathbb{F})}(m-r_P)\deg(P).\]
\end{itemize}
\end{theorem}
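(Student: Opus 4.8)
The plan is to derive all four assertions from the behaviour of the valuation $v_P$ under extraction of an $m$-th root, together with the tameness guaranteed by $p\nmid m$. The starting point is purely local: if $x^m=u$ and $P^\prime\mid P$, then $m\,v_{P^\prime}(x)=v_{P^\prime}(u)=e(P^\prime\mid P)\,v_P(u)$, so $m$ divides $e(P^\prime\mid P)\,v_P(u)$; writing $r_P=(m,v_P(u))$ and dividing by $r_P$, the coprimality of $m/r_P$ and $v_P(u)/r_P$ forces $m/r_P\mid e(P^\prime\mid P)$. Applied at the distinguished place $Q$, where $r_Q=(m,v_Q(u))=1$, this gives $m\mid e(Q^\prime\mid Q)$; since $e(Q^\prime\mid Q)\leq[\mathbb{F}^\prime:\mathbb{F}]\leq\deg\Phi=m$, all these inequalities are equalities, which simultaneously shows that $\Phi(T)$ is irreducible, that $[\mathbb{F}^\prime:\mathbb{F}]=m$, and that $\mathbb{F}^\prime:\mathbb{F}$ is totally ramified at $Q$. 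Because $\mathbb{L}$ contains a primitive $m$-th root of unity $\zeta$, the $\mathbb{F}$-conjugates of $x$ are the elements $\zeta^i x$, so $\mathbb{F}^\prime:\mathbb{F}$ is cyclic Galois of degree $m$, and $p\nmid m$ makes it tame at every place.

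For an arbitrary $P$ I would compute $e(P^\prime\mid P)$ through a tower that isolates an unramified part from a totally ramified part. Set $r=r_P$, $\mu=m/r$, and interpose $\mathbb{F}_0=\mathbb{F}(x^{\mu})$, the subextension of degree $r$. Since $(x^{\mu})^{r}=u$ with $r\mid v_P(u)=s$, writing $u=t^{s}w$ with $t$ a uniformizer at $P$ and $w$ a $P$-unit gives $(x^{\mu}t^{-s/r})^{r}=w$, so $\mathbb{F}_0$ is generated over $\mathbb{F}$ by an $r$-th root of the unit $w$ and is therefore unramified at $P$. Over $\mathbb{F}_0$ the element $x$ is a $\mu$-th root of $x^{\mu}$, and at the place $P_0$ of $\mathbb{F}_0$ below $P^\prime$ one has $v_{P_0}(x^{\mu})=\tfrac1r\,e(P_0\mid P)\,v_P(u)=s/r$, which is coprime to $\mu$; hence the local argument of the first paragraph applies verbatim to $\mathbb{F}^\prime:\mathbb{F}_0$ and yields total ramification of degree $\mu$. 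Multiplying the two ramification indices gives $e(P^\prime\mid P)=\mu\cdot1=m/r_P$, as claimed.

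The statement on the constant field follows from the total ramification at $Q$: a nontrivial extension of the full constant field would produce an unramified constant-field subextension of $\mathbb{F}^\prime:\mathbb{F}$, which would bound $e(Q^\prime\mid Q)$ strictly below $m$, contradicting total ramification; hence $\mathbb{L}$ is algebraically closed in $\mathbb{F}^\prime$. The genus is then obtained from the Riemann--Hurwitz formula. Tameness gives different exponent $d(P^\prime\mid P)=e(P^\prime\mid P)-1$, and grouping the places of $\mathbb{F}^\prime$ over a fixed $P$ (all sharing the same $e_P=m/r_P$ and the same residue degree by the Galois property, so that $(e_P-1)\deg(P)\sum_{P^\prime\mid P}f(P^\prime\mid P)=m\,\frac{e_P-1}{e_P}\deg(P)=(m-r_P)\deg(P)$) turns $2g^\prime-2=m(2g-2)+\sum_{P^\prime}(e(P^\prime\mid P)-1)\deg(P^\prime)$ into $2g^\prime-2=m(2g-2)+\sum_{P}(m-r_P)\deg(P)$; halving yields the displayed formula.

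I expect the ramification computation to be the main obstacle: the clean splitting of $\mathbb{F}^\prime:\mathbb{F}$ into an unramified ``root of a unit'' part and a totally ramified ``coprime valuation'' part is exactly what makes both the index $m/r_P$ and, via tameness, the genus contribution $(m-r_P)\deg(P)$ come out correctly, and it is the step that must be set up carefully to avoid the naive (and insufficient) argument giving only the divisibility $m/r_P\mid e(P^\prime\mid P)$.
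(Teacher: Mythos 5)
The paper does not prove this statement: it is quoted verbatim as \cite[Corollary 3.7.4]{Sti}, so there is no internal proof to compare against. Your argument is correct and is essentially the standard proof from Stichtenoth — the valuation identity $m\,v_{P^\prime}(x)=e(P^\prime\mid P)\,v_P(u)$ giving $m/r_P\mid e(P^\prime\mid P)$, total ramification at $Q$ forcing irreducibility and pinning down the constant field, the unramified--totally ramified tower through $\mathbb{F}(x^{m/r_P})$ to get the exact ramification index, and tame Riemann--Hurwitz for the genus — so there is nothing to flag beyond noting that the unramifiedness of the ``$r$-th root of a unit'' step silently uses $p\nmid r$, which indeed follows from $p\nmid m$.
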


\begin{theorem}\label{th:artinschreier}{\rm \cite[Theorem 3.7.10]{Sti}}
Let $L(T)\in\mathbb{L}[T]$ be a separable $p$-polynomial of degree $\bar{q}$ with all its roots in $\mathbb{L}$.
Let $u\in\mathbb{F}$ be such that for every $P\in\mathbb{P}(\mathbb{F})$ there exists $z\in\mathbb{F}$ (depending on $P$) satisfying either $v_P(u-L(z))\geq0$ or $v_P(u-L(z))=-m$ with $m>0$ and $p\nmid m$.
Define $m_P=-1$ in the former case and $m_P=m$ in the latter case.
Let $\mathbb{F}^\prime=\mathbb{F}(x)$, where $x$ is a root of $\Phi(T)=L(T)-u\in\mathbb{F}[T]$.
If there exists $Q\in\mathbb{P}(\mathbb{F})$ such that $m_Q>0$, then
\begin{itemize}
    \item $\Phi(T)$ is irreducible over $\mathbb{F}$, and the field extension $\mathbb{F}^\prime\colon\mathbb{F}$ of degree $\bar{q}$ is called a \emph{generalized Artin-Schreier extension};
    \item each place $P\in\mathbb{P}(\mathbb{F})$ is either unramified or totally ramified in $\mathbb{F}^\prime\colon\mathbb{F}$ according to $m_P=-1$ or $m_P>0$, respectively;
    \item $\mathbb{L}$ is the full constant field of $\mathbb{F}^\prime$;
    \item the genera $g$ and $g^\prime$ of $\mathbb{F}$ and $\mathbb{F}^\prime$ (respectively) satisfy
    \[g^\prime= \bar{q}\cdot g+\frac{\bar{q}-1}{2}\left(-2+\sum_{P\in\mathbb{P}(\mathbb{F})}(m_P+1)\deg(P)\right).\]
\end{itemize}
\end{theorem}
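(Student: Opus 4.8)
The plan is to treat $\mathbb{F}' \colon \mathbb{F}$ as a Galois extension and to extract all four conclusions from a single local analysis at the places where $m_P > 0$. First I would record the structural properties of the additive (separable, with nonzero linear coefficient) polynomial $L(T)=\sum_{i=0}^{s}c_iT^{p^i}$ of degree $\bar q=p^s$: its set of roots $G=\{\gamma\colon L(\gamma)=0\}$ is an $\mathbb{F}_p$-subspace of $(\mathbb{L},+)$ of cardinality $\bar q$, hence an elementary abelian $p$-group, and by hypothesis $G\subseteq\mathbb{L}\subseteq\mathbb{F}$. Consequently, if $x$ is one root of $\Phi(T)=L(T)-u$, the full root set is $\{x+\gamma\colon\gamma\in G\}$, so $\mathbb{F}'=\mathbb{F}(x)$ is already the splitting field of $\Phi$ and $\mathbb{F}'\colon\mathbb{F}$ is Galois; the map $\sigma\mapsto\sigma(x)-x$ embeds $\mathrm{Gal}(\mathbb{F}'\colon\mathbb{F})$ into $G$, whence $[\mathbb{F}':\mathbb{F}]=|H|$ for some subgroup $H\leq G$. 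The entire theorem then reduces to showing $H=G$, which I will get from the ramification at $Q$.

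The key step is the local computation at an arbitrary place $P$. Replacing $x$ by $x-z$ (which generates the same field since $L(x-z)=u-L(z)$) lets me assume $v_P(u)\geq0$ when $m_P=-1$, and $v_P(u)=-m_P$ with $p\nmid m_P$ when $m_P>0$. Fix $P'\mid P$ with ramification index $e=e(P'\mid P)$, a divisor of $[\mathbb{F}':\mathbb{F}]$ and hence a power of $p$. Whenever $v_{P'}(x)<0$, the top term $c_sx^{\bar q}$ strictly dominates in $L(x)=\sum c_ix^{p^i}$, so $v_{P'}(L(x))=\bar q\,v_{P'}(x)$. If $m_P=-1$ this forces $v_{P'}(x)\geq0$, and since $\Phi'(x)=c_0\neq0$ the reduction of $\Phi$ is separable, so $P$ is unramified. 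If $m_P=m>0$, comparing $\bar q\,v_{P'}(x)=v_{P'}(u)=-me$ with $v_{P'}(x)=-w$ gives $w\bar q=me$; writing $e=p^j$ and using $p\nmid m$ forces $j=s$, i.e.\ $e=\bar q$ and $w=m$. Applying this at $Q$ yields $e(Q'\mid Q)=\bar q\leq[\mathbb{F}':\mathbb{F}]\leq\bar q$, so $[\mathbb{F}':\mathbb{F}]=\bar q$, $H=G$, $\Phi$ is irreducible, and every place is unramified or totally ramified according to $m_P=-1$ or $m_P>0$. Since a constant-field extension is unramified at every place, the total ramification at $Q$ forces the full constant field of $\mathbb{F}'$ to remain $\mathbb{L}$.

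It remains to obtain the genus from Riemann--Hurwitz, $2g'-2=\bar q(2g-2)+\deg(\mathrm{Diff})$, for which I need the different exponent at the (wildly) totally ramified places. I would claim $d(P'\mid P)=(\bar q-1)(m_P+1)$ and prove it through the ramification filtration of $G\cong\mathrm{Gal}(\mathbb{F}'\colon\mathbb{F})$: because the conjugates of $x$ differ by the constants $\gamma\in G$ and $v_{P'}(x)=-m_P$ with $\gcd(m_P,\bar q)=1$, the filtration has a single common jump, $G_0=\cdots=G_{m_P}=G$ and $G_{m_P+1}=\{1\}$, so Hilbert's formula $d=\sum_{i\geq0}(|G_i|-1)$ gives $d=(m_P+1)(\bar q-1)$. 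Substituting into Riemann--Hurwitz, using $\deg(P')=\deg(P)$ at totally ramified places and noting that unramified places contribute $m_P+1=0$, yields $g'-1=\bar q(g-1)+\frac{\bar q-1}{2}\sum_P(m_P+1)\deg(P)$, which is the asserted formula. The main obstacle is exactly this different computation: the ramification index $\bar q$ is a power of $p$, so the tame formula fails, and one must either control the full ramification filtration as above or decompose $\mathbb{F}'\colon\mathbb{F}$ into a tower of degree-$p$ Artin--Schreier steps and sum the contributions via the conductor--discriminant relation.
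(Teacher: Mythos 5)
This statement is quoted by the paper as a known result (Stichtenoth, Theorem 3.7.10) and the paper supplies no proof of its own, so there is nothing internal to compare against; your argument is essentially the standard one and its main lines are correct. The Galois-theoretic setup (roots of $L$ forming an elementary abelian group $G\subseteq\mathbb{L}$, the embedding $\sigma\mapsto\sigma(x)-x$ of the Galois group into $G$), the local translation $x\mapsto x-z$, the valuation comparison $v_{P'}(L(x))=\bar q\,v_{P'}(x)$ when $v_{P'}(x)<0$, and the arithmetic $w\bar q=me$ with $p\nmid m$ forcing $e=\bar q$ are exactly how one proves irreducibility, the ramification dichotomy, and (via total ramification at $Q$) the preservation of the constant field. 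The one step you should flesh out is the claim that the ramification filtration at a totally ramified $P$ has its single jump at $m_P$: since $x$ has a pole at $P'$, you cannot read off $i_{P'}(\sigma)$ from $\sigma(x)-x=\gamma$ directly, but must evaluate $v_{P'}(\sigma(\pi)-\pi)$ for an actual prime element, e.g.\ $\pi=x^{a}t^{b}$ with $t$ a $P$-prime and $-am_P+b\bar q=1$; expanding $(1+\gamma/x)^a-1$ and noting $p\nmid a$ gives $i_{P'}(\sigma)=m_P+1$ for every $\sigma\ne1$, and then Hilbert's formula yields $d(P'\mid P)=(\bar q-1)(m_P+1)$ and the stated genus via Riemann--Hurwitz. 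You correctly identify this wild-different computation as the crux and name the two standard ways to handle it (the filtration argument above, or a tower of degree-$p$ Artin--Schreier steps with transitivity of the different), so the proposal is sound once that computation is written out.
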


Let $\cC$ be a projective algebraic plane curve over $\mathbb{K}$ with affine equation $F(X,Y)=0$. Suppose that $\cC$ is defined over $\mathbb{F}_{q^n}$, i.e. the ideal of $\cC$ is generated by a polynomial over $\mathbb{F}_{q^n}$, and hence we can assume $F(X,Y)\in\mathbb{F}_{q^n}[X,Y]$.
If $F(X,Y)$ is irreducible over $\mathbb{K}$, we say that $\cC$ is absolutely (or geometrically) irreducible; in this case, we denote respectively by $\mathbb{K}(\cC)$ and $\mathbb{F}_{q^n}(\cC)$ the $\mathbb{K}$- and $\mathbb{F}_{q^n}$-rational function field of $\cC$, i.e. the fields of rational functions on $\cC$ defined over $\mathbb{K}$ and $\mathbb{F}_{q^n}$.
The fields $\mathbb{K}(\cC)$ and $\mathbb{F}_{q^n}(\cC)$ are algebraic function fields whose full constant field $\mathbb{L}$ is equal to $\K$ and $\F_{q^n}$, respectively; they are generated over $\mathbb{L}$ by the coordinate functions $x$ and $y$, which are transcendental over $\mathbb{L}$ and are algebraically dependent by $F(x,y)=0$.
Note that $F(x,T)$ and $F(T,y)$ are the minimal polynomials of $y$ and $x$ over $\mathbb{L}(x)$ and $\mathbb{L}(y)$, respectively.

The genus of the absolutely irreducible curve $\cC$ coincides with the genus of the function field $\K(\cC)$, and with the genus of the function field $\F_{q^n}(\cC)$.

Let $\mathbb{P}(\cC)=\cC(\mathbb{K})$ be the set of places of $\cC$, i.e. the set of (rational) places of $\K(\cC)$; let $\cC(\F_{q^n})$ be the set of $\mathbb{F}_{q^n}$-rational places of $\cC$, i.e. the set of rational places of $\F_{q^n}(\cC)$.
The extension $\mathbb{K}(\cC)\colon\mathbb{F}_{q^n}(\cC)$ is a constant field extension, and the $\F_{q^n}$-rational places of $\cC$ are the restriction to $\F_{q^n}(\cC)$ of places of $\K(\cC)$ which are fixed by the Frobenius map on $\mathbb{K}(\cC)$.
The center of an $\F_{q^n}$-rational place is an $\F_{q^n}$-rational point of $\cC$, that is, a point with coordinates in $\F_{q^n}$.
Conversely, if $P$ is a simple $\F_{q^n}$-rational point of $\cC$, then $P$ is the center of just one place $\mathcal{P}$ of $\cC$, and $\mathcal{P}$ is $\F_{q^n}$-rational (hence, we may identify $P$ and $\mathcal{P}$).

We now recall the well-known Hasse-Weil bound.

\begin{theorem}\label{th:hasseweil}{\rm \cite[Theorem 5.2.3]{Sti} (Hasse-Weil bound)}
Let $\mathcal{C}$ be an absolutely irreducible curve of genus $g$ defined over the finite field $\mathbb{F}_{q^n}$.
Then
\[q^n+1-2g\sqrt{q^n}\leq|\mathcal{C}(\mathbb{F}_{q^n})|\leq q^n+1+2g\sqrt{q^n}.\]
\end{theorem}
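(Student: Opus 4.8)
Since the paper only cites this bound, I will sketch how I would prove it from scratch; write $Q=q^{n}$ and set $N=|\mathcal{C}(\mathbb{F}_{Q})|$. The cleanest self-contained route I know is Weil's geometric argument, which realises the rational points as fixed points of Frobenius. Let $\phi\colon\mathcal{C}\to\mathcal{C}$ be the $Q$-power Frobenius endomorphism, so that $P\in\mathcal{C}(\mathbb{F}_{Q})$ if and only if $\phi(P)=P$. On the surface $S=\mathcal{C}\times\mathcal{C}$ I would work with four divisor classes: the diagonal $\Delta$, the graph $\Gamma_{\phi}=\{(P,\phi(P))\}$, and the two rulings $h=\{P_{0}\}\times\mathcal{C}$ and $v=\mathcal{C}\times\{P_{0}\}$. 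Because the differential of Frobenius vanishes, $\phi-\mathrm{id}$ is separable and all fixed points are transverse; hence $N$ equals the intersection number $\Gamma_{\phi}\cdot\Delta$.

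The next step is bookkeeping of intersection numbers. Directly from the definitions one gets $h^{2}=v^{2}=0$, $h\cdot v=1$, $\Delta\cdot h=\Delta\cdot v=1$, $\Gamma_{\phi}\cdot h=1$ and $\Gamma_{\phi}\cdot v=Q$, while $\Delta\cdot\Gamma_{\phi}=N$. Using the adjunction formula together with $K_{S}\equiv(2g-2)(h+v)$, the self-intersections of the two copies of $\mathcal{C}$ come out as $\Delta^{2}=2-2g$ and $\Gamma_{\phi}^{2}=(2-2g)Q$. I would then project $\Delta$ and $\Gamma_{\phi}$ onto the orthogonal complement $V=\langle h,v\rangle^{\perp}$ by setting $\Delta'=\Delta-h-v$ and $\Gamma_{\phi}'=\Gamma_{\phi}-Qh-v$; a short computation gives
\[
\Delta'^{2}=-2g,\qquad \Gamma_{\phi}'^{2}=-2gQ,\qquad \Delta'\cdot\Gamma_{\phi}'=N-Q-1.
\]
The decisive input is the Hodge index theorem: the intersection form on $S$ has signature $(1,\rho-1)$, and since $\langle h,v\rangle$ already carries a hyperbolic plane of signature $(1,1)$, the form is negative definite on $V$. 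Applying the Cauchy--Schwarz inequality for the positive definite form $-(\,\cdot\,)$ on $V$ to the two vectors above yields
\[
(N-Q-1)^{2}=(\Delta'\cdot\Gamma_{\phi}')^{2}\le(\Delta'^{2})(\Gamma_{\phi}'^{2})=4g^{2}Q,
\]
which is exactly $|N-(Q+1)|\le 2g\sqrt{Q}$, the asserted two-sided bound.

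I expect the single genuinely hard step to be the negative-definiteness of the form on $V$ (equivalently the Castelnuovo--Severi inequality): everything else is formal once one has a working intersection theory on the surface, but this positivity statement is where the arithmetic content really sits. An alternative that avoids algebraic surfaces is the elementary Stepanov--Bombieri method, which bounds $N$ by constructing auxiliary functions on $\mathcal{C}$ with prescribed high-order zeros and deducing the estimate for sufficiently large $Q$, then upgrading to the stated inequality by passing to constant field extensions and combining with the rationality and functional equation of the zeta function $Z(t)=\exp\bigl(\sum_{r\ge1}|\mathcal{C}(\mathbb{F}_{Q^{r}})|\,t^{r}/r\bigr)$. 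In that approach the delicate polynomial estimates play exactly the role of the positivity argument, so either way the difficulty is concentrated in a single inequality.
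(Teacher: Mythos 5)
The paper gives no proof of this statement: it is quoted verbatim from Stichtenoth's book, so there is nothing internal to compare against. Your sketch is the standard Weil--Mattuck--Tate argument on the surface $\mathcal{C}\times\mathcal{C}$, and the bookkeeping checks out: the adjunction computations $\Delta^{2}=2-2g$ and $\Gamma_{\phi}^{2}=(2-2g)Q$ are right, the projections $\Delta'=\Delta-h-v$ and $\Gamma_{\phi}'=\Gamma_{\phi}-Qh-v$ do land in $\langle h,v\rangle^{\perp}$ with $\Delta'^{2}=-2g$, $\Gamma_{\phi}'^{2}=-2gQ$, $\Delta'\cdot\Gamma_{\phi}'=N-Q-1$, and Cauchy--Schwarz for the negative semidefinite form then gives $|N-Q-1|\le 2g\sqrt{Q}$. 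You also correctly isolate the one nonformal ingredient (negative definiteness on $\langle h,v\rangle^{\perp}$, i.e.\ Hodge index / Castelnuovo--Severi) and correctly note that transversality of $\Gamma_{\phi}\cap\Delta$ comes from $d\phi=0$. It is worth saying that the route actually taken in the cited source is your second alternative, not your first: Stichtenoth derives Theorem 5.2.3 from the rationality and functional equation of the zeta function together with Bombieri's version of the Stepanov method, which keeps everything inside function-field theory and avoids intersection theory on surfaces; the geometric proof you lead with buys conceptual transparency (the bound literally is Cauchy--Schwarz) at the cost of needing a working intersection theory and the Hodge index theorem. One small caution for use in this paper: the curves $\mathcal{C}_{g,f}$ here are singular plane models, so $|\mathcal{C}(\mathbb{F}_{q^n})|$ must be read as the number of rational places of the function field (equivalently, points of the nonsingular model), which is how the authors set things up in Section 2.2; your argument, phrased for a smooth projective model, is consistent with that reading.
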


\section{Intersections of linear sets on the projective line}\label{sec:intersections}

Let $L_f$ and $L_g$ two $\F_q$-linear set of rank $n$ in $\PG(1,q^n)$, for some $q$-polynomials $f(x)=\sum_{i=0}^d a_i x^{q^i}$ and $g(x)=\sum_{i=0}^{k} b_i x^{q^i}$ in $\tilde{\mathcal{L}}_{n,q}$.
Then $L_f$ and $L_g$ intersect if and only if
\[ \bar{x}f(\bar{y})-\bar{y}g(\bar{x})=0 \]
for some $\bar{x},\bar{y}\in \F_{q^n}^*$; equivalently, if and only if the $\fqn$-rational plane curve $\mathcal{C}_{g,f}$ with affine equation
\begin{equation}\label{eq:curve}
\mathcal{C}_{g,f} \colon\quad  \frac{g(X)}X-\frac{f(Y)}Y=0
\end{equation}
has at least one $\F_{q^n}$-rational affine point with nonzero coordinates.

One of the tools that will be used is the Hasse-Weil lower bound, which can be successfully applied when the underlying curve has low degree.
Since the degree of $\C_{g,f}$ heavily depends on the degrees of $g(x)$ and $f(x)$, it is sometimes convenient to write one of the two linear sets, say $L_f$, as
\[ L_f=\{ \langle (x^{q^h},f'(x)) \rangle_{\F_{q^n}} \colon x \in \F_{q^n}^* \}, \]
for some positive integer $h$.
In this way, $L_f$ and $L_g$ meet if and only if the $\fqn$-rational curve $\mathcal{C}_{g,f}^h$ with plane model
\begin{equation}\label{eq:curve2}
\mathcal{C}_{g,f}^h \colon\quad  \frac{g(X)}X-\frac{f'(Y)}{Y^{q^h}}=0
\end{equation}
has at least one $\F_{q^n}$-rational affine point with nonzero coordinates.

We are also interested in the intersection of the two $\fq$-linear sets $L_g$ and $\sigma(L_f)=\{ \langle (f'(x),x^{q^h}) \rangle_{\F_{q^n}} \colon x \in \F_{q^n}^*\}$, where
\[\sigma\in{\rm PGL}(2,q^n),\quad \sigma:\langle(X_0,X_1)\rangle_{\F_{q^n}}\mapsto\langle(X_1,X_0)\rangle_{\F_{q^n}}.\]
The intersection of $L_g$ and $\sigma(L_f)$ is given by the $\F_{q^n}$-rational affine points with nonzero coordinates of the curve with plane model
\[
\mathcal{X}_{g,f}^h \colon\quad  \frac{g(X)}X-\frac{Y^{q^h}}{f'(Y)}=0.
\]

\subsection{Intersection of clubs}\label{sec:clubs}

Every $\F_q$-linear club of rank $n$ in $\PG(1,q^n)$ with head $\langle (1,0) \rangle_{\F_{q^n}}$ and not passing through $\langle (0,1) \rangle_{\F_{q^n}}$ has shape
\begin{equation}\label{eq:club}
\{ \langle (x, \alpha \mathrm{Tr}_{q^n/q}(x))\rangle_{\F_{q^n}} \colon x \in \F_{q^n}^* \},
\end{equation}
for some $\alpha \in \F_{q^n}^*$, see \cite[Proposition 6.1]{ShVVdV}.

We investigate the intersection between two clubs of shape \eqref{eq:club} with possibly different maximum fields of linearity $\F_{q^{r_1}},\F_{q^{r_2}}\subseteq \F_{q^n}$.
Consider the following linear sets in $\PG(1,q^n)$:
\[\{\langle ( x, \alpha_1\mathrm{Tr}_{q^n/q^{r_1}}(x) ) \rangle_{\F_{q^n}} \colon x \in \F_{q^n}^* \}, \quad \{\langle ( x, \alpha_2\mathrm{Tr}_{q^n/q^{r_2}}(x) ) \rangle_{\F_{q^n}} \colon x \in \F_{q^n}^* \}, \]
with $\alpha_1,\alpha_2\in\mathbb{F}_{q^n}^*$.
The projectivity $\langle(X_0,X_1)\rangle_{\F_{q^n}}\mapsto \langle(X_0,\alpha_1^{-1}X_1)\rangle_{\F_{q^n}}$ maps the aforementioned linear sets to
\[L_{r_1}=\{\langle ( x,\mathrm{Tr}_{q^n/q^{r_1}}(x) ) \rangle_{\F_{q^n}} \colon x \in \F_{q^n}^* \},\]
\[L_{r_2}= \{\langle ( x, \alpha\mathrm{Tr}_{q^n/q^{r_2}}(x) ) \rangle_{\F_{q^n}} \colon x \in \F_{q^n}^* \}, \]
where $\alpha=\alpha_2/\alpha_1$.
Both $L_{r_1}$ and $L_{r_2}$ are clubs with head $H=\langle(1,0)\rangle_{\F_{q^n}}$.
We investigate whether $L_{r_1}$ and $L_{r_2}$ share at least one point other than the head.

\begin{remark}\label{rk:d=1}
Let $d=(r_1,r_2)$. Then $L_{r_1},L_{r_2},\sigma(L_{r_2})$ are $\mathbb{F}_{q^d}$-linear, and hence we can replace $r_1$ with $r_1/d$, $r_2$ with $r_2/d$, and $n$ with $n/d$. Therefore, in the study of $L_{r_1}\cap L_{r_2}$ and $L_{r_1}\cap\sigma(L_{r_2})$ we will assume without restriction that $(r_1,r_2)=1$.
\end{remark}

The curve given in \eqref{eq:curve} which describes $L_{r_1}\cap L_{r_2}$ has high degree, namely $q^{n-\min\{r_1,r_2\}}-1$. The following remark allows to study a curve of lower degree, namely $q^{\max\{r_1,r_2\}}$.

\begin{remark}\label{rk:Hilbert}
Let $n$ and $r$ be positive integers with $r\mid n$.
The elements $u \in \F_{q^n}$ of shape $\frac{v}{\mathrm{Tr}_{q^n/q^r}(v)}$, where $v$ ranges over the elements of $\F_{q^n}^*$ with $\mathrm{Tr}_{q^n/q^r}(v)\ne 0$, are exactly the elements of $\F_{q^n}^*$ satisfying $\mathrm{Tr}_{q^n/q^r}(u)=1$.
By Hilbert's 90 theorem, they coincide with the elements $w^{q^r}-w+\gamma$, where $w$ ranges in $\F_{q^n}$ and $\gamma$ is a fixed element in $\F_{q^n}$ with $\mathrm{Tr}_{q^n/q^r}(\gamma)=1$.
\end{remark}

\begin{theorem}\label{th:primo}
Let $r_1,r_2\mid n$ with $(r_1,r_2)=1$. If there exists $a\in \F_{q^n}$ such that $\mathrm{Tr}_{q^n/q^{r_1}}(a)=-1$ and $\mathrm{Tr}_{q^n/q^{r_2}}(\alpha a)=1$, or such that $\mathrm{Tr}_{q^n/q^{r_2}}(a)=-1$ and $\mathrm{Tr}_{q^n/q^{r_1}}(a/\alpha)=1$,
then $L_{r_1}$ and $L_{r_2}$ share at least one point other than their head $H$.
\end{theorem}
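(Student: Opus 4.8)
The plan is to translate ``$L_{r_1}$ and $L_{r_2}$ share a point other than the head $H=\langle(1,0)\rangle_{\F_{q^n}}$'' into a trace condition on a single field element, and then to realize such an element from the hypothesis. By the discussion around \eqref{eq:curve} (taking $g(x)=\mathrm{Tr}_{q^n/q^{r_1}}(x)$, $f(y)=\alpha\,\mathrm{Tr}_{q^n/q^{r_2}}(y)$ and $h=0$), the points $\langle(x,\mathrm{Tr}_{q^n/q^{r_1}}(x))\rangle_{\F_{q^n}}$ and $\langle(y,\alpha\,\mathrm{Tr}_{q^n/q^{r_2}}(y))\rangle_{\F_{q^n}}$ coincide exactly when
\[
\alpha\,x\,\mathrm{Tr}_{q^n/q^{r_2}}(y)=y\,\mathrm{Tr}_{q^n/q^{r_1}}(x),
\]
and this common point differs from $H$ precisely when $\mathrm{Tr}_{q^n/q^{r_1}}(x)\neq0$ (equivalently $\mathrm{Tr}_{q^n/q^{r_2}}(y)\neq0$). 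First I would normalize through the first club, setting $u=x/\mathrm{Tr}_{q^n/q^{r_1}}(x)$, so that $\mathrm{Tr}_{q^n/q^{r_1}}(u)=1$ by $\F_{q^{r_1}}$-linearity of the trace; substituting reduces the coincidence condition to $y=\alpha u\,\mathrm{Tr}_{q^n/q^{r_2}}(y)$, which upon a second normalization forces $\mathrm{Tr}_{q^n/q^{r_2}}(\alpha u)$ to a prescribed value. Thus a non-head common point exists if and only if there is $u\in\F_{q^n}$ realizing a prescribed pair of trace values $\bigl(\mathrm{Tr}_{q^n/q^{r_1}}(u),\,\mathrm{Tr}_{q^n/q^{r_2}}(\alpha u)\bigr)$; normalizing instead through $L_{r_2}$ (replacing $u$ by $v/\alpha$ with $\mathrm{Tr}_{q^n/q^{r_2}}(v)$ prescribed) produces the symmetric formulation and should account for the two alternative hypotheses, one phrased through $a$ and the other through $a/\alpha$.

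Next I would produce the required $u$ from the hypothesis using the additive description of trace-prescribed elements furnished by Remark~\ref{rk:Hilbert}: the elements with a fixed value of $\mathrm{Tr}_{q^n/q^{r_1}}$ form a coset of $\{w^{q^{r_1}}-w\colon w\in\F_{q^n}\}=\ker\mathrm{Tr}_{q^n/q^{r_1}}$ (additive Hilbert~90), and likewise for $r_2$. Writing $u$ and $\alpha u$ in the respective cosets determined by fixed trace-one elements $\gamma_1,\gamma_2$, I would reduce the search to solving the additive equation
\[
\alpha\bigl(w_1^{q^{r_1}}-w_1\bigr)-\bigl(w_2^{q^{r_2}}-w_2\bigr)=\gamma_2-\alpha\gamma_1
\]
for $w_1,w_2\in\F_{q^n}$, i.e.\ to showing that $\gamma_2-\alpha\gamma_1$ lies in $\alpha\,\ker\mathrm{Tr}_{q^n/q^{r_1}}+\ker\mathrm{Tr}_{q^n/q^{r_2}}$. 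The element $a$ of the hypothesis is exactly what I would use to certify this membership; once $w_1,w_2$ are found, $u$ (and hence $x,y$) is explicit, and the normalization $\mathrm{Tr}_{q^n/q^{r_1}}(x)\neq0$ guarantees that the resulting common point is not the head.

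The main obstacle I expect is precisely this last step: matching the two trace conditions simultaneously and verifying that the explicit hypothesis on $a$ indeed forces solvability of the additive equation above without collapsing the candidate point onto $H$. This is where the coprimality $(r_1,r_2)=1$ must genuinely be exploited, since it governs the interaction of $\ker\mathrm{Tr}_{q^n/q^{r_1}}$ and $\ker\mathrm{Tr}_{q^n/q^{r_2}}$ inside $\F_{q^n}$ (cf.\ Remark~\ref{rk:d=1}) and hence the size of the image in which $\gamma_2-\alpha\gamma_1$ must lie. By contrast, the determinant reformulation and the two Hilbert~90 normalizations are routine $\F_q$-linear algebra; the substance of the argument is the conversion of the explicit trace data carried by $a$ into an actual solution, and a careful sign/characteristic bookkeeping to see that each of the two alternative hypotheses yields a bona fide second point of $L_{r_1}\cap L_{r_2}$.
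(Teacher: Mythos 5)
Your reduction is the same as the paper's: via Remark~\ref{rk:Hilbert}, a non-head common point exists if and only if one can solve
\[
\left(w_1^{q^{r_1}}-w_1+\gamma_1\right)-\frac{1}{\alpha}\left(w_2^{q^{r_2}}-w_2+\gamma_2\right)=0
\]
with $\mathrm{Tr}_{q^n/q^{r_1}}(\gamma_1)=\mathrm{Tr}_{q^n/q^{r_2}}(\gamma_2)=1$, which is exactly your additive equation after multiplying by $\alpha$. The gap is that you stop precisely where the proof happens: you never say how the hypothesis on $a$ certifies solvability, and you predict that this step requires analyzing the subspace $\alpha\ker\mathrm{Tr}_{q^n/q^{r_1}}+\ker\mathrm{Tr}_{q^n/q^{r_2}}$ and genuinely exploiting $(r_1,r_2)=1$. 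Neither is needed. The paper's resolution is a one-line specialization: take $w_1\in\F_{q^{r_1}}$, so that $w_1^{q^{r_1}}-w_1=0$ and the candidate common point is $\langle(\gamma_1,1)\rangle_{\F_{q^n}}$ with $\gamma_1=-a$; the equation then collapses to $w_2^{q^{r_2}}-w_2=\alpha\gamma_1-\gamma_2$, which by additive Hilbert 90 is solvable exactly when $\mathrm{Tr}_{q^n/q^{r_2}}(\alpha\gamma_1-\gamma_2)=0$, i.e.\ when $\mathrm{Tr}_{q^n/q^{r_2}}(\alpha\gamma_1)=1$ --- and that is (up to the sign normalization of $a$) the first alternative of the hypothesis. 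The second alternative is handled symmetrically by instead killing the $w_2$-summand. In particular the coprimality $(r_1,r_2)=1$ plays no role in this argument; it is only the harmless normalization of Remark~\ref{rk:d=1}, so the ``interaction of the two kernels'' you expect to have to control never enters.

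Equivalently, and more transparently: the non-head points of $L_{r_1}$ are exactly the points $\langle(u,1)\rangle_{\F_{q^n}}$ with $\mathrm{Tr}_{q^n/q^{r_1}}(u)=1$, and those of $L_{r_2}$ are the points $\langle(u,1)\rangle_{\F_{q^n}}$ with $\mathrm{Tr}_{q^n/q^{r_2}}(\alpha u)=1$, so the whole theorem amounts to exhibiting one element $u$ with both prescribed traces equal to $1$, and the hypothesis hands you the candidate $u=-a$ (resp.\ $u=a/\alpha$) directly, with no equation left to solve. Your warning about ``sign/characteristic bookkeeping'' is apt exactly here: $u=-a$ gives $\mathrm{Tr}_{q^n/q^{r_2}}(\alpha u)=-\mathrm{Tr}_{q^n/q^{r_2}}(\alpha a)$, so the signs in the two alternatives must be tracked against the statement --- but this is arithmetic, not the structural obstacle you anticipate. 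As written, your proposal correctly identifies the target equation but does not contain the step that solves it, so it is a setup rather than a proof.
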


\begin{proof}
Let $\beta=-1/\alpha$.
Suppose that there exists $a\in \F_{q^n}$ such that $\mathrm{Tr}_{q^n/q^{r_1}}(a)=-1$ and $\mathrm{Tr}_{q^n/q^{r_2}}(\alpha a)=1$.
By Remark \ref{rk:Hilbert}, $L_{r_1}\cap L_{r_2}\ne\{H\}$ if and only if there exist $\bar{x},\bar{y}\in\F_{q^n}$ such that $F(\bar{x},\bar{y})=0$, where
\[F(X,Y)= X^{q^{r_1}}-X+\beta(Y^{q^{r_2}}-Y)+\gamma_1+\beta\gamma_2, \]
for $\gamma_1=-a$ and any fixed $\gamma_2\in\F_{q^n}$ with $\mathrm{Tr}_{q^n/q^{r_2}}(\gamma_2)=1$.
As $X^{q^{r_1}}-X=\prod_{\varepsilon \in \F_{q^{r_1}}} (X-\varepsilon)$, if there exists $\bar{y}\in \F_{q^n}$ such that
\begin{equation}\label{eq:traceinty} \beta(\bar{y}^{q^{r_2}}-\bar{y})+\gamma_1+\beta \gamma_2=0,
\end{equation}
then $F(\bar{x},\bar{y})=0$ for every $\bar{x}\in \F_{q^{r_1}}$. Clearly, \eqref{eq:traceinty} admits a solution if and only if
\[ \mathrm{Tr}_{q^n/q^{r_2}}(\beta^{-1}\gamma_1+\gamma_2)=0, \]
i.e. $\mathrm{Tr}_{q^n/q^{r_2}}(\beta^{-1}\gamma_1)=-1$.
Suppose that there exists $a\in \F_{q^n}$ such that $\mathrm{Tr}_{q^n/q^{r_2}}(a)=-1$ and $\mathrm{Tr}_{q^n/q^{r_1}}(a/\alpha)=1$.
Arguing as above, if $\bar{y}\in\F_{q^{r_2}}$ and $\gamma_2=-a$ satisfies $\mathrm{Tr}_{q^n/q^{r_1}}(\beta\gamma_2)=-1$, then the claim follows.
\end{proof}

In the following result, we characterize the condition for $L_{r_1}$ and $L_{r_2}$ to share a further point other then their head when $\alpha \in \F_{q^{r_1}}\cdot \F_{q^{r_2}}=\{ a\cdot b \colon a \in \F_{q^{r_1}}, b \in \F_{q^{r_2}} \}$.

\begin{theorem}\label{th:secondo}
Let $\alpha=a\cdot b$, with $a \in \F_{q^{r_1}}$ and $b \in \F_{q^{r_2}}$, let $r_1,r_2\mid n$ with $(r_1,r_2)=1$.
The linear sets $L_{r_1}$ and $L_{r_2}$ share at least one point other than their head $H$ if and only if there exist
$\gamma_1,\gamma_2\in\F_{q^n}$ such that $\mathrm{Tr}_{q^n/q^{r_1}}(\gamma_1)=\mathrm{Tr}_{q^n/q^{r_2}}(\gamma_2)=1$ and $\mathrm{Tr}_{q^n/q}(a\gamma_1-\frac{\gamma_2}{b})=0$.
\end{theorem}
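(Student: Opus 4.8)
The plan is to reduce the intersection statement to the solvability of a pair of trace equations, and then to resolve it via Hilbert's Theorem~90 (as in Remark~\ref{rk:Hilbert}) together with a dimension count that uses the coprimality $(r_1,r_2)=1$.

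First I would describe the non-head points of the two clubs. Since $L_{r_1}$ and $L_{r_2}$ are clubs with head $H=\langle(1,0)\rangle_{\F_{q^n}}$, every point distinct from $H$ is represented by a unique vector with second coordinate $1$. Dividing $(x,\mathrm{Tr}_{q^n/q^{r_1}}(x))$ by $\mathrm{Tr}_{q^n/q^{r_1}}(x)\in\F_{q^{r_1}}^*$ and using the $\F_{q^{r_1}}$-linearity of $\mathrm{Tr}_{q^n/q^{r_1}}$, a non-head point of $L_{r_1}$ has the form $\langle(u,1)\rangle_{\F_{q^n}}$ with $\mathrm{Tr}_{q^n/q^{r_1}}(u)=1$, and conversely each such $u$ gives a point of $L_{r_1}\setminus\{H\}$ by Remark~\ref{rk:Hilbert}. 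Likewise a non-head point of $L_{r_2}$ is $\langle(u,1)\rangle_{\F_{q^n}}$ with $\mathrm{Tr}_{q^n/q^{r_2}}(\alpha u)=1$. Hence $L_{r_1}\cap L_{r_2}\ne\{H\}$ if and only if there exists $u\in\F_{q^n}$ with $\mathrm{Tr}_{q^n/q^{r_1}}(u)=1$ and $\mathrm{Tr}_{q^n/q^{r_2}}(\alpha u)=1$.

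Next I would symmetrize using $\alpha=ab$ with $a\in\F_{q^{r_1}}^*$ and $b\in\F_{q^{r_2}}^*$. Pulling $a$ and $b$ out of the respective traces and setting $v=au$ (a bijection of $\F_{q^n}$), the two conditions become $\mathrm{Tr}_{q^n/q^{r_1}}(v)=a$ and $\mathrm{Tr}_{q^n/q^{r_2}}(v)=1/b$. Fix $\gamma_1,\gamma_2$ with $\mathrm{Tr}_{q^n/q^{r_1}}(\gamma_1)=\mathrm{Tr}_{q^n/q^{r_2}}(\gamma_2)=1$, so that $\mathrm{Tr}_{q^n/q^{r_1}}(a\gamma_1)=a$ and $\mathrm{Tr}_{q^n/q^{r_2}}(\gamma_2/b)=1/b$; these exist because the traces are surjective. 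By Remark~\ref{rk:Hilbert}, the solution sets of the two equations are the cosets $a\gamma_1+K_1$ and $\gamma_2/b+K_2$, where $K_i=\{w^{q^{r_i}}-w : w\in\F_{q^n}\}=\ker\mathrm{Tr}_{q^n/q^{r_i}}$. These cosets meet if and only if $a\gamma_1-\gamma_2/b\in K_1+K_2$.

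The crux is the subspace identity $K_1+K_2=\ker\mathrm{Tr}_{q^n/q}$, and this is the step I expect to be the main obstacle, as well as the only place where $(r_1,r_2)=1$ is used. I would prove it by orthogonality with respect to the nondegenerate form $\langle x,y\rangle=\mathrm{Tr}_{q^n/q}(xy)$ of \eqref{eq:bilform}: for each $i$, transitivity of the trace gives $\F_{q^{r_i}}\subseteq K_i^{\perp}$, and since $\dim_{\F_q}K_i=n-r_i$ one gets $K_i^{\perp}=\F_{q^{r_i}}$. Hence $(K_1+K_2)^{\perp}=K_1^{\perp}\cap K_2^{\perp}=\F_{q^{r_1}}\cap\F_{q^{r_2}}=\F_{q^{(r_1,r_2)}}=\F_q$, so $K_1+K_2=\F_q^{\perp}=\ker\mathrm{Tr}_{q^n/q}$. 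Therefore a common point exists if and only if $\mathrm{Tr}_{q^n/q}(a\gamma_1-\gamma_2/b)=0$. Finally, by transitivity $\mathrm{Tr}_{q^n/q}(a\gamma_1-\gamma_2/b)=\mathrm{Tr}_{q^{r_1}/q}(a)-\mathrm{Tr}_{q^{r_2}/q}(1/b)$ is independent of the chosen $\gamma_1,\gamma_2$, so the displayed condition of the statement is equivalent to $L_{r_1}\cap L_{r_2}\ne\{H\}$, as required.
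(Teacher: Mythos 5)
Your proof is correct, and it takes a genuinely different route from the paper's at the decisive step. Both arguments begin the same way: after normalizing representatives, $L_{r_1}\cap L_{r_2}\ne\{H\}$ is equivalent to the existence of $u\in\F_{q^n}$ with $\mathrm{Tr}_{q^n/q^{r_1}}(u)=1$ and $\mathrm{Tr}_{q^n/q^{r_2}}(\alpha u)=1$, which via Remark \ref{rk:Hilbert} becomes the solvability of $\bar{x}^{q^{r_1}}-\bar{x}-\frac{1}{ab}(\bar{y}^{q^{r_2}}-\bar{y})+\gamma_1-\frac{\gamma_2}{ab}=0$. From there the paper stays geometric: it factors $aF(X,Y)$ into $q$ components $\mathrm{Tr}_{q^{r_1}/q}(aX)-\mathrm{Tr}_{q^{r_2}/q}(Y/b)+\varepsilon_i=0$, where the $\varepsilon_i$ are the roots of $T^q-T-(a\gamma_1-\frac{\gamma_2}{b})$, and then runs an iterated sequence of rational maps (effectively the Euclidean algorithm on $r_1,r_2$) to show each component has affine $\F_{q^n}$-rational points precisely when $\varepsilon_i\in\F_{q^n}$, i.e.\ when $\mathrm{Tr}_{q^n/q}(a\gamma_1-\frac{\gamma_2}{b})=0$. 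You instead recast the solvability as the nonemptiness of the intersection of the cosets $a\gamma_1+K_1$ and $\frac{\gamma_2}{b}+K_2$ of the trace kernels, and settle it with the single linear-algebra identity $K_1+K_2=\ker\mathrm{Tr}_{q^n/q}$, proved by duality for the form \eqref{eq:bilform} via $K_i^{\perp}=\F_{q^{r_i}}$ and $\F_{q^{r_1}}\cap\F_{q^{r_2}}=\F_{q^{(r_1,r_2)}}=\F_q$. The hypothesis $(r_1,r_2)=1$ enters at the analogous point in both proofs, but your version is shorter, avoids the curve-by-curve analysis entirely, and has the added benefit of making explicit that $\mathrm{Tr}_{q^n/q}(a\gamma_1-\frac{\gamma_2}{b})=\mathrm{Tr}_{q^{r_1}/q}(a)-\mathrm{Tr}_{q^{r_2}/q}(1/b)$ does not depend on the choice of $\gamma_1,\gamma_2$, so the existential quantifier in the statement is really a universal one; the paper's factorization argument, on the other hand, is the template it reuses elsewhere for curves that do not split into linear components.
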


\begin{proof}
Assume $r_2 \leq r_1$.
As already noted in the proof of Theorem \ref{th:primo}, $L_{r_1}\cap L_{r_2}\ne\{H\}$ if and only if there exist $\bar{x},\bar{y}\in \F_{q^n}$ such that
\[ F(\bar{x},\bar{y})=\bar{x}^{q^{r_1}}-\bar{x}-\frac{1}{ab}(\bar{y}^{q^{r_2}}-\bar{y})+\gamma_1-\frac{\gamma_2}{ab}=0, \]
for some $\gamma_1,\gamma_2 \in \F_{q^n}$ with $\mathrm{Tr}_{q^{n}/q^{r_1}}(\gamma_1)=\mathrm{Tr}_{q^n/q^{r_2}}(\gamma_2)=1$.
Let $\varepsilon_1,\ldots,\varepsilon_q\in\mathbb{K}$ be the distinct roots of $T^q-T-(a\gamma_1-\frac{\gamma_2}{b})\in\F_{q^n}[T]$. Then
\[ \prod_{i=1}^q \left(\mathrm{Tr}_{q^{r_1}/q}(aX)-\mathrm{Tr}_{q^{r_2}/q}\left(\frac{Y}b \right)+\varepsilon_i\right)
= \left( \mathrm{Tr}_{q^{r_1}/q}(aX)-\mathrm{Tr}_{q^{r_2}/q}\left(\frac{Y}b \right)\right)^q +\]
\[- \left( \mathrm{Tr}_{q^{r_1}/q}(aX)-\mathrm{Tr}_{q^{r_2}/q}\left(\frac{Y}b \right)\right) -\prod_{i=1}^q \varepsilon_{i} = aF(X,Y). \]
Let $\mathcal{C}_i$ be the plane curve with affine equation $F_i(X,Y)=0$, where
\[
F_i(X,Y)=\mathrm{Tr}_{q^{r_1}/q}(aX)-\mathrm{Tr}_{q^{r_2}/q}\left(\frac{Y}b \right)+\varepsilon_i.
\]
The change of coordinates $(X,Y)\mapsto(aX,-Y/b)$ maps $\mathcal{C}_i$ to the curve $\mathcal{X}_i$ with equation $G_i(X,Y)=0$ where $G_i(X,Y)=F_i(X/a,-bY)$.

If $r_2=r_1=1$, then $G_i(X,Y)=X+Y+\varepsilon_i$. Hence, $\mathcal{X}_i$ has affine $\F_{q^n}$-rational points if and only if $\varepsilon_i\in\F_{q^n}$.

Suppose $r_2<r_1$ and write $r_1=\ell r_2+k$ with $0\leq k<r_2$.
Consider the rational map $\varphi_{r_2}:(X,Y)\mapsto(X+Y,X^{q^{r_2}})$. Then $\varphi_{r_2}$ maps $\mathcal{X}_i$ to the curve $\mathcal{X}_i^1$ with affine equation $G_i^1(X,Y)=0$, where
\[
G_i^1(X,Y)=\mathrm{Tr}_{q^{(\ell-1)r_2+k}/q}(X)+\mathrm{Tr}_{q^{r_2}/q}(Y)+\varepsilon_i.
\]
Note that $\varphi_{r_2}$ induces a bijection between the affine $\F_{q^n}$-rational points of $\mathcal{X}_i$ and the affine $\F_{q^n}$-rational points of $\mathcal{X}_i^1$.
Let $\mathcal{X}_i^0=\mathcal{X}_i$. For every $j=1,\ldots,\ell$, let $\mathcal{X}_i^j$ be the curve $\varphi_{r_2}(\mathcal{X}_i^{j-1})$ having equation $G_i^j(X,Y)=0$, where
\[
G_i^j(X,Y)=\mathrm{Tr}_{q^{(\ell-j)r_2+k}/q}(X)+\mathrm{Tr}_{q^{r_2}/q}(Y)+\varepsilon_i,
\]
so that
$\mathcal{X}_i^\ell$ has equation $\mathrm{Tr}_{q^k/q}(X)+\mathrm{Tr}_{q^{r_2}/q}(Y)+\varepsilon_i=0$.
If $k=1$, then
\[ \mathcal{X}_i^{\ell}\colon\quad X=-\mathrm{Tr}_{q^{r_2}/q}(Y)-\varepsilon_i.
\]
If $k>1$, then perform the change of coordinates $\psi:(X,Y)\mapsto(Y,X)$, put $\mathcal{Y}_i^0=\psi(\mathcal{X}_i^\ell)$, and define $\mathcal{Y}_i^j=\varphi_k(\mathcal{Y}_i^{j-1})$ for every $j=1,\ldots,h$, where $h=\lfloor r_2/k\rfloor$.
The iterated application of this argument provides the curve $\mathcal{Z}_i$ with affine equation
\[
\mathcal{Z}_i\colon\quad X=-\mathrm{Tr}_{q^d/q}(Y)-\varepsilon_i,
\]
for some $d\geq1$; the affine $\F_{q^n}$-rational points of $\mathcal{C}_i$ are in one-to-one correspondence with the affine $\F_{q^n}$-rational points of $\mathcal{Z}_i$.

Therefore, $\mathcal{C}_i$ has an affine $\F_{q^n}$-rational points if and only if $\varepsilon_i\in\F_{q^n}$. Also, $\varepsilon_i\in\F_{q^n}$ for some $i\in\{1,\ldots,q\}$ if and only if $\varepsilon_i\in\F_{q^n}$ for all $i\in\{1,\ldots,q\}$.

Thus, $L_{r_1}\cap L_{r_2}\ne \{H\}$ is equivalent to the existence of some $\gamma_1,\gamma_2\in\F_{q^n}$ such that $\mathrm{Tr}_{q^n/q^{r_1}}(\gamma_1)=\mathrm{Tr}_{q^n/q^{r_2}}(\gamma_2)=1$ and $\mathrm{Tr}_{q^n/q}(a\gamma_1-\frac{\gamma_2}{b})=0$.



\end{proof}

Note that, when the maximum fields of linearity of $L_{r_1}$ and $\sigma(L_{r_2})$ coincide, the question whether $L_{r_1}\cap\sigma(L_{r_2})$ is non-empty has been analyzed in \cite[Section 6.1.1]{ShVVdV}: if $r_1=r_2$, then $L_{r_1}\cap \sigma(L_{r_2})\ne\emptyset$ if and only if $\alpha\in T=\{xy \colon x,y\in\mathbb{F}_{q^n},\mathrm{Tr}_{q^n/q^{r_1}}(x)=\mathrm{Tr}_{q^n/q^{r_1}}(y)=1\}$; see \cite[Proposition 6.3]{ShVVdV}.
Therefore, we suppose without restriction that $r_1\ne r_2$.

The linear sets $L_{r_1}$ and $\sigma(L_{r_2})$ share a point if and only if there exist $\bar{x}, \bar{y} \in \F_{q^n}^*$ such that $\Tr_{q^n/q^{r_1}}(\bar{x})\ne0$, $\Tr_{q^n/q^{r_2}}(\bar{y})\ne0$ and
\[ \frac{\bar{x}}{\Tr_{q^n/q^{r_1}}(\bar{x})} \frac{  \bar{y}}{\Tr_{q^n/q^{r_2}}(\bar{y})} = \alpha. \]
By Remark \ref{rk:Hilbert}, this is equivalent to the existence of an $\F_{q^n}$-rational affine point of the curve $\mathcal{C}_{r_1,r_2}$ with affine equation
\[ (U^{q^{r_1}}-U+\gamma_1) (V^{q^{r_2}}-V+\gamma_2)=\alpha, \]
for some $\gamma_1, \gamma_2 \in \F_{q^n}$ satisfying $\mathrm{Tr}_{q^n/q^{r_1}}(\gamma_1)=\mathrm{Tr}_{q^n/q^{r_2}}(\gamma_2)=1$.

\begin{theorem}\label{th:Cr1r2}
If $\frac{n}2\geq r_1+r_2+1$, then $L_{r_1}\cap\sigma(L_{r_2})\ne \emptyset$.
\end{theorem}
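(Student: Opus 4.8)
The plan is to realise $\mathcal{C}_{r_1,r_2}$ as a generalized Artin--Schreier cover of the projective line, to compute its genus via Theorem \ref{th:artinschreier}, and then to invoke the Hasse--Weil bound (Theorem \ref{th:hasseweil}) to force an affine $\F_{q^n}$-rational point. Write $A(U)=U^{q^{r_1}}-U+\gamma_1$ and $B(V)=V^{q^{r_2}}-V+\gamma_2$, so that the defining equation $A(U)B(V)=\alpha$ becomes $V^{q^{r_2}}-V=u$, with $u=\alpha/A(U)-\gamma_2\in\K(U)$. Since $T^{q^{r_2}}-T$ is a separable $p$-polynomial of degree $q^{r_2}$ whose roots $\F_{q^{r_2}}$ all lie in $\K$, the field $\K(\mathcal{C}_{r_1,r_2})=\K(U)(V)$ arises from $\F=\K(U)$ exactly in the shape required by Theorem \ref{th:artinschreier}. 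Every affine $\F_{q^n}$-rational point $(\bar U,\bar V)$ of the curve then yields a common point of $L_{r_1}$ and $\sigma(L_{r_2})$: indeed $A(\bar U)$ and $B(\bar V)$ have trace $1$ over $\F_{q^{r_1}}$ and $\F_{q^{r_2}}$ respectively, so Remark \ref{rk:Hilbert} produces $\bar x,\bar y\in\F_{q^n}^*$ of nonzero trace with $A(\bar U)B(\bar V)=\alpha$.

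First I would analyse the poles of $u$ over $\F=\K(U)$. As $A$ is separable of degree $q^{r_1}$, it has $q^{r_1}$ distinct roots in $\K$; at each such place $u$ has a simple pole, so $m_P=1$ there and $p\nmid1$. At $U=\infty$ one has $v_\infty(\alpha/A(U))=q^{r_1}>0$, so $u$ is regular and this place is unramified ($m_\infty=-1$), as is every remaining finite place. Applying Theorem \ref{th:artinschreier} with $\bar q=q^{r_2}$ and $g=0$ (the genus of $\K(U)$), the polynomial $T^{q^{r_2}}-T-u$ is irreducible over $\F$, hence $\mathcal{C}_{r_1,r_2}$ is absolutely irreducible, and its genus is
\[ g'=\frac{q^{r_2}-1}{2}\Big(-2+2\,q^{r_1}\Big)=(q^{r_1}-1)(q^{r_2}-1), \]
the only contribution to the ramification sum coming from the $q^{r_1}$ simple poles of $u$, each with $m_P=1$ and degree $1$.

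Next I would bound the number of non-affine $\F_{q^n}$-rational places, namely those to be discarded from the Hasse--Weil count. The poles of $V$ lie exactly above the $q^{r_1}$ zeros of $A$ (each totally ramified, giving one place), and above $U=\infty$ there are at most $q^{r_2}$ places; all other rational places are centred at genuine affine points $(\bar U,\bar V)\in\F_{q^n}^2$. Hence the number of usable affine points is at least
\[ q^n+1-2(q^{r_1}-1)(q^{r_2}-1)\,q^{n/2}-q^{r_1}-q^{r_2}, \]
and it remains to show that this quantity is positive under $n/2\geq r_1+r_2+1$.

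The delicate point, and the step I expect to be the main obstacle, is precisely this final inequality, because the crude estimate $2g\,q^{n/2}\le 2q^{\,r_1+r_2+n/2}\le 2q^{\,n-1}$ only yields $q^n-2q^{\,n-1}=q^{\,n-1}(q-2)\ge0$, which degenerates for $q=2$. To circumvent this I would retain the lower-order terms of $g$: rewriting the bound as
\[ q^{\,n-1}(q-2)+2q^{n/2}\big(q^{r_1}+q^{r_2}-1\big)+1-q^{r_1}-q^{r_2}, \]
the middle term is at least $6q^{n/2}$ since $q^{r_1}+q^{r_2}\ge 4$, while $q^{r_1}+q^{r_2}\le 2q^{\max\{r_1,r_2\}}\le q^{n/2}/2$ because $\max\{r_1,r_2\}\le r_1+r_2-1\le n/2-2$. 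Thus the whole expression is strictly positive for every prime power $q$, producing an affine $\F_{q^n}$-rational point of $\mathcal{C}_{r_1,r_2}$ and hence $L_{r_1}\cap\sigma(L_{r_2})\ne\emptyset$.
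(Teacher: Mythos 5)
Your proof is correct and follows essentially the same route as the paper: the same generalized Artin--Schreier realization of $\mathcal{C}_{r_1,r_2}$ (with the roles of the two variables swapped, which is immaterial), the same genus $(q^{r_1}-1)(q^{r_2}-1)$, and the Hasse--Weil bound. The only divergence is in the endgame: the paper notes that the poles of $u$ and of $v$ cannot be $\F_{q^n}$-rational, since their residues would be roots of $T^{q^{r_i}}-T+\gamma_i$ while $\mathrm{Tr}_{q^n/q^{r_i}}(\gamma_i)=1\ne 0$, so no rational places need to be discarded and the crude estimate $2g\sqrt{q^n}<2q^{r_1+r_2+n/2}\leq q^n$ already concludes, whereas your subtraction of up to $q^{r_1}+q^{r_2}$ non-affine places forces the finer (but correct) numerical verification you carry out to cover $q=2$.
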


\begin{proof}
Consider the rational function field $\K(v)$, and define
\[ F(v) = \frac{\alpha}{v^{q^{r_2}}-v+\gamma_2}-\gamma_1. \]
The valuation of $F(v)$ is $-1$ at the $q^{r_2}$ distinct zeros of $v^{q^{r_2}}-v+\gamma_2$ in $\K(v)$, and $0$ at any other place of $\K(v)$.
Then, by Theorem \ref{th:artinschreier}, the equation $u^{q^{r_1}}-u=F(v)$ defines a generalized Artin-Schreier extension $\K(u,v)\colon\K(v)$ of degree $q^{r_1}$, with genus
\[ g(\K(u,v))=\frac{q^{r_1}-1}2(-2+2q^{r_2})=(q^{r_1}-1)(q^{r_2}-1). \]
Clearly $\K(u,v)$ is the function field of the curve $\cC_{r_1,r_2}$, and hence $\cC_{r_1,r_2}$ is absolutely irreducible with genus $g(\cC_{r_1,r_2})=(q^{r_1}-1)(q^{r_2}-1)$.

From Theorem \ref{th:hasseweil} and the numerical assumption follows
\[
|\C_{r_1,r_2}(\F_{q^n})| \geq q^n+1-2 g(\C_{r_1,r_2})\sqrt{q^n} >0.
\]
Then there exists an $\F_{q^n}$-rational place of $\cC_{r_1,r_2}$, centered at an $\F_{q^n}$-rational point $P$ of $\cC_{r_1,r_2}$.
The poles $Q$ of $u$ are not $\F_{q^n}$-rational. In fact, $Q$ is not a pole of $v$ and $\bar{v}=v(Q)$ satisfies $\bar{v}^{q^{r_2}}-\bar{v}+\gamma_2=0$; hence $\bar{v}\notin\F_{q^n}$, because $\mathrm{Tr}_{q^n/q^{r_2}}(\gamma_2)\ne0$.
Similarly, the poles of $v$ are not $\F_{q^n}$-rational.
Therefore, $P$ is an $\F_{q^n}$-rational affine point of $\cC_{r_1,r_2}$, whence $L_{r_1}\cap L_{r_2}\ne\emptyset$.
\end{proof}

\subsection{Binomial $\alpha X^{q^k}+\beta X$ case}\label{sec:binomial}

Let $g(X)=\alpha X^{q^k}+\beta X\in\tilde{\mathcal{L}}_{n,q}$ with $\alpha \neq 0$ and $k\geq1$, and $f(Y)=\sum_{i=0}^{d} a_i Y^{q^i}\in\tilde{\mathcal{L}}_{n,q}$, with $d\geq 0$ and $a_d\ne 0$.
We investigate the intersection between the $\F_q$-linear sets
\[  L_g=\{\langle (x,g(x)) \rangle_{\F_{q^n}} \colon x \in \F_{q^n}^* \},\quad L_f=\{\langle (y^{q^h},f(y)) \rangle_{\F_{q^n}} \colon y \in \F_{q^n}^* \}. \]

\begin{remark}
By Proposition {\rm \ref{prop:adjoint}}, $L_g=\{\langle(x,\hat{g}(x))\rangle_{\F_{q^n}}\colon x\in\F_{q^n}^*\}$, where $\hat{g}(x)=\alpha^{q^{n-k}}x^{q^{n-k}}+\beta x$ has $q$-degree $n-k$.
Thus, up to replacing $g(x)$ with $\hat{g}(x)$, we may always assume throughout this section that $k\leq n/2$.
\end{remark}

Let $\ell=\min\{i\colon a_i\ne 0\}$ and \[ \ell_2=\begin{cases}\min\{i>\ell\colon a_i\ne0\}&\textrm{if}\;\ell<d, \\ d&\textrm{if}\;\ell=d.\end{cases}\]
If $\ell\geq h$, define
\[
\bar{f}(y)=\sum_{i=\ell}^d a_i y^{q^{i-h}}.
\]
If $\ell\leq h$, define
\[
\tilde{f}(y)=\sum_{i=\ell}^d a_i y^{q^{i-\ell}}.
\]
Note that, if $\ell\geq h$, then
\[ L_f=\left\{\left\langle\left(y,\bar{f}(y))\right)\right\rangle_{\F_{q^n}}\colon z\in\F_{q^n}^*\right\}; \]
if $\ell\leq h$, then
\[ L_f=\left\{\left\langle\left(y,\tilde{f}(y))\right)\right\rangle_{\F_{q^n}}\colon z\in\F_{q^n}^*\right\}. \]

\begin{remark}\label{rem:aggiunto}
By Proposition {\rm \ref{prop:adjoint}},
\[
L_f=\left\{\left\langle\left(z,\sum_{i=\ell}^{d}a_i^{q^{n+h-i}}z^{q^{n+h-i}}\right)\right\rangle_{\F_{q^n}}\colon z\in\F_{q^n}^*\right\}\]
\begin{equation}\label{eq:vedimeglio}
=\left\{\left\langle\left(y^{q^{n-h}},\bar{\bar{f}}(y)\right)\right\rangle_{\F_{q^n}}\colon y\in\F_{q^n}^*\right\}\quad\textrm{where}\quad \bar{\bar{f}}(y)=\sum_{i=\ell}^{d}a_i^{q^{n+h-i}}y^{q^{n-i}}.
\end{equation}
Hence,
\begin{equation}\label{eq:vedimeglio2}
    \sigma(L_f)=\left\{\left\langle\left(\bar{\bar{f}}(y),y^{q^{n-h}}\right)\right\rangle_{\F_{q^n}}\colon y\in\F_{q^n}^*\right\}.
\end{equation}
In the hypothesis of the results of this section, the parameters $h,d,\ell$ play a crucial role since some bounds involving them are assumed.
When considering the shape \eqref{eq:vedimeglio} for $L_f$ or the shape \eqref{eq:vedimeglio2} for $\sigma(L_f)$, the role of $h$ in such bounds is played by $n-h$ or $0$ according to $h>0$ or $h=0$, respectively; the role of $d$ is played by $n-\ell$ or $n-\ell_2$ according to $\ell\ne0$ or $\ell=0$, respectively; the role of $\ell$ is played by $n-d$ or $0$ according to $\ell\ne0$ or $\ell=0$, respectively.
Therefore, the most convenient bound in the hypothesis of the results is taken into account, that is, a comparison can be made between the result obtained from the investigation of $f(y)$ and the corresponding corollary obtained from the investigation of $\bar{\bar{f}}(y)$.
\end{remark}

We first analyze in Proposition \ref{prop:h=dmon} and Theorem \ref{th:mon} the case when $f(y)$ is a monomial.
\begin{proposition}\label{prop:h=dmon}
Let $f(y)=a_d y^{q^d}$ and suppose that $h=d$.
Then $L_{g}\cap L_{f}\ne\emptyset$ if and only if $\N_{q^n/q^e}\left(\frac{a_d-\beta}{\alpha}\right)=1$ where $e=(n,k)$.
\end{proposition}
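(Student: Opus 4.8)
The plan is to reduce the intersection to a single one-variable power equation and then read off its solvability from the cyclic structure of $\F_{q^n}^*$, bypassing the curve/genus machinery used elsewhere in the section. First I would record that $L_g\cap L_f\neq\emptyset$ exactly when there exist $x,y\in\F_{q^n}^*$ for which the vectors $(x,g(x))$ and $(y^{q^h},f(y))$ are proportional, i.e. $g(x)/x=f(y)/y^{q^h}$ (both first coordinates are automatically nonzero, so these ratios determine the respective points). Since $f(y)=a_d y^{q^d}$ and $h=d$, the right-hand side equals the constant $a_d$ for \emph{every} $y\in\F_{q^n}^*$; equivalently, the plane model $\mathcal{C}_{g,f}^h$ of \eqref{eq:curve2} degenerates to $\alpha X^{q^k-1}+\beta-a_d=0$, which does not involve $Y$. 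Thus $Y$ is free, no Hasse--Weil estimate is needed, and the intersection is nonempty if and only if $g(x)/x=a_d$ admits a solution $x\in\F_{q^n}^*$.

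Next I would rewrite this equation. Using $\alpha\neq0$, the relation $g(x)/x=a_d$ becomes $\alpha x^{q^k-1}+\beta=a_d$, that is
\[
x^{q^k-1}=\frac{a_d-\beta}{\alpha}=:c .
\]
Hence the entire question is whether $c$ lies in the image of the power map $x\mapsto x^{q^k-1}$ on the multiplicative group $\F_{q^n}^*$.

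Finally I would characterize this image. Since $\F_{q^n}^*$ is cyclic of order $q^n-1$, an element $c$ is a $(q^k-1)$-th power if and only if $c^{(q^n-1)/\gcd(q^k-1,\,q^n-1)}=1$. Here $\gcd(q^k-1,q^n-1)=q^{(n,k)}-1=q^e-1$, so the exponent $(q^n-1)/(q^e-1)$ is exactly the norm exponent and $c^{(q^n-1)/(q^e-1)}=\N_{q^n/q^e}(c)$. Therefore $c$ is a $(q^k-1)$-th power if and only if $\N_{q^n/q^e}(c)=1$, which is the claimed equivalence. In the degenerate case $a_d=\beta$ one has $c=0$: then $x^{q^k-1}=c$ has no solution in $\F_{q^n}^*$ and $\N_{q^n/q^e}(0)=0\neq1$, so the statement remains correct.

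I expect no serious obstacle. The only two points that require care are the collapse of the $Y$-dependence (which is what makes the two-variable curve arguments of the section unnecessary in this case) and the standard identification of $(q^k-1)$-th powers with the kernel of $\N_{q^n/q^e}$ via the identity $\gcd(q^k-1,q^n-1)=q^e-1$.
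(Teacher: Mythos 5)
Your argument is correct and is essentially the paper's own proof: both reduce the problem to the observation that $f(y)/y^{q^h}=a_d$ is constant (equivalently, $L_f$ is the single point $\langle(1,a_d)\rangle_{\F_{q^n}}$), so the intersection is nonempty iff $x^{q^k-1}=\frac{a_d-\beta}{\alpha}$ is solvable in $\F_{q^n}^*$, which is then settled by the standard identity $\gcd(q^k-1,q^n-1)=q^{(n,k)}-1$ and the norm criterion. Your write-up merely makes explicit two details the paper leaves implicit, namely the cyclic-group computation and the degenerate case $a_d=\beta$.
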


\begin{proof}
Note that $L_{f}=\{P\}$ with $P=\langle(1,a_d)\rangle_{\F_{q^n}}$, and $P\in L_g$ if and only if there exists $\bar{x}\in\F_{q^n}^*$ such that $\bar{x}^{q^k-1}=\frac{a_d-\beta}{\alpha}$. The claim follows.
\end{proof}


\begin{theorem}\label{th:mon}
Let $f(x)=a_d y^{q^d}$ and suppose that $h\ne d$.
\begin{itemize}
    \item If $\beta=0$, then $L_g\cap L_f\ne\emptyset$ if and only if $\mathrm{N}_{q^n/q^e}(a_d/\alpha)=1$, where $e=(n,k,d-h)$.
    \item If $\beta\ne0$ and $k+|d-h|\leq\frac{n}{2}$, then $L_g\cap L_f\ne\emptyset$.
\end{itemize}
\end{theorem}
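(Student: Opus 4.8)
The plan is to use the translation, established in the introduction and at the start of Section \ref{sec:intersections}, of $L_g\cap L_f\ne\emptyset$ into the existence of an $\fqn$-rational point with nonzero coordinates on the curve $\frac{g(X)}{X}-\frac{f(Y)}{Y^{q^h}}=0$. For our $g,f$ this reads
\[
\alpha X^{q^k-1}+\beta = a_d\,Y^{q^d-q^h}.
\]
I would treat the two bullets separately, because for $\beta=0$ the statement is purely arithmetic (a power/norm condition in $\fqn^*$), whereas for $\beta\ne0$ it is genuinely geometric and calls for Hasse--Weil on a curve whose genus I must control.

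For $\beta=0$ the equation is $\alpha\bar x^{q^k-1}=a_d\bar y^{q^d-q^h}$, so $L_g\cap L_f\ne\emptyset$ if and only if $a_d/\alpha$ lies in the subgroup $H\le\fqn^*$ generated by the $(q^k-1)$-th powers and the $(q^d-q^h)$-th powers. Since $\fqn^*$ is cyclic of order $q^n-1$, the subgroup $H$ is exactly the set of $\delta$-th powers, where $\delta=(q^k-1,\,q^d-q^h,\,q^n-1)$. I would then simplify $\delta$: it is coprime to $q$ (as $\delta\mid q^n-1$), and from $\delta\mid q^d-q^h=q^h(q^{|d-h|}-1)$ it follows that $\delta\mid q^{|d-h|}-1$; hence $\delta=(q^k-1,\,q^{|d-h|}-1,\,q^n-1)=q^{(n,k,d-h)}-1=q^e-1$ by the identity $(q^a-1,q^b-1)=q^{(a,b)}-1$. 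Finally $a_d/\alpha$ is a $(q^e-1)$-th power exactly when $(a_d/\alpha)^{(q^n-1)/(q^e-1)}=\N_{q^n/q^e}(a_d/\alpha)=1$, which is the asserted criterion.

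For $\beta\ne0$ I would first reduce to $d>h$. Since $y\mapsto y^{q^h}$ permutes $\fqn^*$, rewriting $L_f$ as in Remark \ref{rem:aggiunto} replaces the pair $(h,d)$ either by $(0,d-h)$ when $d>h$, or, after passing to the adjoint, by parameters whose difference is $h-d$; in both cases the curve acquires the separable form
\[
\cC\colon\quad \alpha X^{q^k-1}+\beta = c\,Y^{q^s-1},\qquad s=|d-h|\ge1,\ c\ne0.
\]
Reading $\cC$ over $\K(Y)$ as the Kummer equation $X^{q^k-1}=u$ with $u=\frac{c\,Y^{q^s-1}-\beta}{\alpha}$, I would invoke Theorem \ref{th:kummer}: since $\beta\ne0$, the element $u$ has $q^s-1$ distinct simple zeros at nonzero $Y$ and a single pole (over $Y=\infty$) of order $q^s-1$, so one of the zeros gives a place $Q$ with $(q^k-1,v_Q(u))=1$; thus $\cC$ is absolutely irreducible, and the Kummer genus formula yields $2g=q^{k+s}-2q^k-2q^s-q^{(k,s)}+6$. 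Theorem \ref{th:hasseweil} then gives $|\cC(\fqn)|\ge q^n+1-2g\sqrt{q^n}$.

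The hard part will be that the hypothesis $k+s\le n/2$ places us exactly at the Hasse--Weil threshold $q^{k+s}\cdot q^{n/2}\le q^n$, so the leading terms cancel and I must extract strict positivity from the lower-order data. Concretely I would rewrite the bound as $|\cC(\fqn)|\ge 1+(2q^k+2q^s+q^{(k,s)}-6)\,q^{n/2}$, where the negative terms of the genus supply the decisive slack (the bracket is at least $4$). I then discard the unusable places: those over $Y=\infty$ (at most $q^{(k,s)}-1$, the only ones with $X=\infty$ or $Y=\infty$), those over $Y=0$ (at most $q^k-1$), and the totally ramified places over the zeros of $c\,Y^{q^s-1}-\beta$ (at most $q^s-1$, the only ones with $X=0$); their total is strictly below $3q^{k+s}$. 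Comparing this with $(2q^k+2q^s+q^{(k,s)}-6)q^{n/2}\ge 4q^{k+s}$, obtained from $q^{n/2}\ge q^{k+s}$, leaves at least one $\fqn$-rational affine point of $\cC$ with nonzero coordinates, whence $L_g\cap L_f\ne\emptyset$.
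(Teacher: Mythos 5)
Your proposal is correct and follows essentially the same route as the paper: for $\beta=0$ a norm/power-subgroup argument in the cyclic group $\F_{q^n}^*$ (the paper phrases the converse via an explicit B\'ezout choice of exponents, you via the subgroup generated by the $(q^k-1)$-th and $(q^d-q^h)$-th powers, with the same reduction $(q^k-1,q^d-q^h,q^n-1)=q^e-1$), and for $\beta\ne0$ the same Kummer extension $X^{q^k-1}=\alpha^{-1}(cY^{q^s-1}-\beta)$ with the same genus $\frac{1}{2}\left(q^{k+s}-2q^k-2q^s-q^{(k,s)}+6\right)$, the same Hasse--Weil count, and the same exclusion of the $q^{(k,s)}-1+q^k-1+q^s-1$ places that are zeros or poles of $x$ or $y$. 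The only cosmetic deviation is that for $d<h$ you normalize via the adjoint of Remark \ref{rem:aggiunto} while the paper applies the projectivity $(X,Y)\mapsto(X,1/Y)$; both land on the same curve.
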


\begin{proof}
Suppose $\beta=0$.
Then $L_g\cap L_f\ne\emptyset$ if and only if there exist $\bar{x},\bar{y}\in\F_{q^n}^*$ such that $\frac{\alpha}{a_d} \frac{\bar{x}^{q^k-1}}{\bar{y}^{q^h(q^{d-h}-1)}}= 1$ or $\frac{\alpha}{a_d}(\bar{x}^{q^k-1})(\bar{y}^{q^d(q^{h-d}-1)})= 1$ according to $d>h$ or $h>d$, respectively.
If such $\bar{x},\bar{y}$ exist, then clearly ${\rm N}_{q^n/q^e}(\alpha/a_d)=1$.
Conversely, suppose ${\rm N}_{q^n/q^e}(\alpha/a_d)=1$, so that $\alpha/a_d=\xi^{q^e-1}$ for some $\xi\in\fqn^*$.
Define $\gamma=\frac{q^d-1}{q^e-1}$ and $\epsilon=\frac{q^k-1}{q^e-1}$.
Since $(\gamma,\epsilon)$ is coprime with $q^n-1$, there exist $u,v\in\mathbb{Z}$ such that $\gamma v-\epsilon u\equiv 1\pmod{q^n-1}$.
Choose $\bar{x}=\xi^u$ and $\bar{y}^{q^h}=\xi^v$ if $d>h$, or $\bar{y}^{q^d}=1/\xi^v$ if $h>d$.
Then $\langle(\bar{x},\bar{y})\rangle_{\fqn}\in L_f\cap L_g$.
The first point follows.

Suppose $\beta\ne0$ and $d>h$.
Then $\bar{f}(y)=a_d y^{q^{d-h}}$ and $\mathcal{C}_{g,\bar{f}}$ has equation
\begin{equation}\label{eq:curveCgfmon} X^{q^k-1}=\alpha^{-1}\left(a_d Y^{q^{d-h}-1}-\beta\right). \end{equation}
Let $F(Y)=\alpha^{-1}\left(a_d Y^{q^{d-h}-1}-\beta\right)$, and consider the rational functional field $\mathbb{K}(y)$. Then the rational function $F(y)$ has valuation $-(q^{d-h}-1)$ at the pole of $y$, and valuation $1$ at the $q^{d-h}-1$ zeros of $y-\lambda$ for $\lambda\in\mathbb{K}$ satisfying $F(\lambda)=0$. Every other place of $\mathbb{K}(y)$ is neither a zero nor a pole of $F(y)$.
By Theorem \ref{th:kummer}, the equation $x^{q^k-1}=F(y)$ defines a Kummer extension $\mathbb{K}(x,y)\colon\mathbb{K}(y)$ of degree $q^k-1$, whose genus is equal to
\[ 1+(q^k-1)(0-1)+\frac{1}{2}\left((q^{d-h}-1)(q^k-2) + q^k-1-(q^{(k,d-h)}-1)\right) \]
\begin{equation}\label{eq:genere} = \frac{(q^k-2)(q^{d-h}-3)+q^k-q^{(k,d-h)}}{2}. \end{equation}
Therefore, $\mathcal{C}_{g,\bar f}$ is absolutely irreducible with genus $g(\mathcal{C}_{g,\bar f})$ as in \eqref{eq:genere}.
The number of poles of $x$ or $y$ on $\mathcal{C}_{g,\bar f}$ is $q^{(k,d-h)}-1$, the number of zeros of $y$ is $q^k-1$, and the number of zeros of $x$ is $q^{d-h}-1$.
By Theorem \ref{th:hasseweil} and the numerical assumption, we have
\[ |\mathcal{C}_{g,\bar f}(\F_{q^n})|\geq q^n+1-2g(\mathcal{C}_{g,\bar f})\sqrt{q^n}>q^{(k,d-h)}-1 + q^k-1 + q^{d-h}-1. \]
Thus, there exists an $\F_{q^n}$-rational place $P$ of $\mathcal{C}_{g,\bar{f}}$ which is neither a zero nor pole of $x$ or $y$.
Therefore, the center of $P$ is an $\F_{q^n}$-rational affine point $(\bar{x},\bar{y})$ of $\mathcal{C}_{g,\bar f}$ with $\bar{x}\ne0$ and $\bar{y}\ne0$; the claim follows.

Suppose $\beta\ne0$ and $d<h$.
Then $L_f=\{\langle (a_d^{-1}y^{q^{h-d}},y)\rangle_{\F_{q^n}} \colon y \in \F_{q^n}^*\}$.
Therefore, if $f'(y)=a_d^{-1}y^{q^{h-d}}$, the curve $\mathcal{X}_{g,f'}$ is given by
\begin{equation}\label{eq:curve1a} X^{q^k-1}=\alpha^{-1} (a_d Y^{1-q^{h-d}}-\beta). \end{equation}
Clearly, the projectivity $\psi:(X,Y)\mapsto(X,1/Y)$ maps $\mathcal{X}_{g,f^\prime}$ to the curve $\mathcal{C}_{g,\bar f}$ in \eqref{eq:curveCgfmon} (where $d-h$ is replaced by $h-d$).
Therefore, arguing as above, the claim follows.
\end{proof}

By the previous results, we can assume from now on that $f(y)$ is not a monomial.
Next proposition considers the case when $f(y)$ is a binomial of a particular shape.

\begin{proposition}\label{prop:sabbassa}
Let $f(y)=a_d y^{q^d}+a_\ell y^{q^\ell}$ where $d\ne\ell$ and one of the following holds: either $d=h$ and $a_d=\beta$, or $\ell=h$ and $a_\ell=\beta$.
Let
\begin{equation}\label{eq:t} t=\begin{cases} \ell & \textrm{if}\quad h=d, \\ d & \textrm{if}\quad h=\ell.  \end{cases}\end{equation}
Then $L_f \cap L_g\neq \emptyset$ if and only if $\N_{q^n/q^e}(a_t/\alpha)= 1$, where $e=(n,k,t-h)$.
\end{proposition}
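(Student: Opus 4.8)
The plan is to avoid the curve machinery entirely: under the stated hypotheses the defining equation degenerates to a purely multiplicative one, so the statement reduces to a counting argument in the cyclic group $\F_{q^n}^*$. First I would record the intersection criterion in its raw form. A point $\langle(\bar x,g(\bar x))\rangle_{\F_{q^n}}$ of $L_g$ coincides with a point $\langle(\bar y^{q^h},f(\bar y))\rangle_{\F_{q^n}}$ of $L_f$ (with $\bar x,\bar y\in\F_{q^n}^*$) exactly when the two vectors are proportional, that is, when $g(\bar x)/\bar x=f(\bar y)/\bar y^{q^h}$. Expanding, $L_g\cap L_f\ne\emptyset$ if and only if there exist $\bar x,\bar y\in\F_{q^n}^*$ with
\[
\alpha\bar x^{q^k-1}+\beta=a_d\,\bar y^{q^d-q^h}+a_\ell\,\bar y^{q^\ell-q^h}.
\]

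Next I would exploit the hypothesis. Since $h\in\{\ell,d\}$ and the corresponding coefficient equals $\beta$, one of the two monomials on the right-hand side reduces to the constant $a_h=\beta$, which cancels the $\beta$ on the left. With $t$ as in \eqref{eq:t} (so that $a_t$ is the surviving coefficient and $t-h=\pm(d-\ell)$), the displayed equation collapses to $\alpha\bar x^{q^k-1}=a_t\,\bar y^{q^t-q^h}$. Factoring $q^t-q^h=\pm q^{\ell}(q^{d-\ell}-1)$ and setting $\bar z=\bar y^{q^{\ell}}$, which is a bijection of $\F_{q^n}^*$, I reduce the whole problem to deciding whether
\[
\bar x^{q^k-1}\,\bar z^{\,\pm(q^{d-\ell}-1)}=\frac{a_t}{\alpha}
\]
has a solution $(\bar x,\bar z)\in(\F_{q^n}^*)^2$, the sign being immaterial since $\bar z$ ranges over all of $\F_{q^n}^*$.

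Finally I would solve this multiplicative equation. As $(\bar x,\bar z)$ runs over $(\F_{q^n}^*)^2$, the left-hand side sweeps out the subgroup of the cyclic group $\F_{q^n}^*$ generated by the $(q^k-1)$-th powers and the $(q^{d-\ell}-1)$-th powers. In a cyclic group of order $q^n-1$ the $m$-th powers form the unique subgroup of index $\gcd(m,q^n-1)$, and the join of two such subgroups is the unique subgroup of index $\gcd\big(\gcd(q^k-1,q^n-1),\gcd(q^{d-\ell}-1,q^n-1)\big)=\gcd(q^k-1,q^{d-\ell}-1,q^n-1)$. By the elementary identity $\gcd(q^a-1,q^b-1)=q^{\gcd(a,b)}-1$ this common index equals $q^e-1$ with $e=(n,k,d-\ell)=(n,k,t-h)$; hence the image is exactly the unique subgroup of order $(q^n-1)/(q^e-1)$. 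This subgroup coincides with $\ker\big(\N_{q^n/q^e}\big)$, which has the same order. Therefore the equation is solvable if and only if $a_t/\alpha\in\ker\big(\N_{q^n/q^e}\big)$, i.e. $\N_{q^n/q^e}(a_t/\alpha)=1$, as claimed.

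The computations are routine; the only point requiring care is the group-theoretic identification of the image with $\ker\big(\N_{q^n/q^e}\big)$, which rests on the gcd identity for $q^a-1$ and on the uniqueness of subgroups of prescribed order in a cyclic group, together with the uniform treatment of the two cases $h=d$ and $h=\ell$. Notably, no appeal to the Hasse--Weil bound or to the function-field extensions of Theorems \ref{th:kummer} and \ref{th:artinschreier} is needed, precisely because the cancellation of the constant term turns the relevant locus into a fully reducible binomial equation whose rational points are governed entirely by the multiplicative structure of $\F_{q^n}^*$.
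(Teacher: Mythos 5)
Your proposal is correct and takes essentially the same route as the paper: the paper likewise cancels the $a_h\bar y^{q^h}$ term against $\beta$ to reduce the intersection condition to $\bar x^{q^k-1}=\frac{a_t}{\alpha}\,\bar y^{q^h(q^{t-h}-1)}$ and then concludes by the same multiplicative norm argument, which it leaves implicit with ``the claim follows.'' Your explicit identification of the image subgroup with $\ker(\N_{q^n/q^e})$ merely supplies the detail the paper omits.
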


\begin{proof}
We have $L_f\cap L_g\ne\emptyset$ if and only if \[ \bar{x}^{q^k-1}= \frac{a_t \bar{y}^{q^t}}{\alpha\bar{y}^{q^h}}=\frac{a_t}{\alpha} \bar{y}^{q^h(q^{t-h}-1)} \]
for some $\bar{x},\bar{y}\in\F_{q^n}^*$.
The claim follows.
\end{proof}

Next results deal with $L_f\cap L_g$ for the remaining shapes of $f(y)$, considering separately the cases $h\leq\ell$ and $h>\ell$.

\begin{theorem}\label{th:h<ell_dritto}
Assume that $h\leq\ell$ and $f(y)$ is not a monomial.
If $f(y)=a_d y^{q^d}+a_\ell y^{q^\ell}$ and $h=\ell$, assume also that $a_h\ne\beta$.
Let
\[
m_h=\begin{cases}
0,&\textrm{if}\quad a_h\ne\beta, \\
\ell-h,&\textrm{if}\quad a_h=\beta=0, \\
\ell_2-h,&\textrm{if}\quad a_h=\beta\ne0. \\
\end{cases}
\]
If
\[
\max\left\{ k+d-h-m_h,\frac{d-h}{2}\right\}\leq\begin{cases} \frac{n}{2} & \textrm{if}\quad m_h\leq\frac{d-h}{2}, \\ \frac{n}{2}-1 & \textrm{if}\quad m_h>\frac{d-h}{2}, \end{cases}
\]
then $L_{g}$ and $L_f$ share at least one point.
\end{theorem}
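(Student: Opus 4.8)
The plan is to translate the intersection condition into the existence of a suitable affine point on a Kummer curve, and then to force such a point to exist by means of the Hasse--Weil bound. Since $h\le\ell$, I would first rewrite $L_f$ in the reduced shape $\{\langle(z,\bar f(z))\rangle_{\F_{q^n}}\colon z\in\F_{q^n}^*\}$, using that $z=y^{q^h}$ is a bijection of $\F_{q^n}^*$ and that $f(y)=\sum_{i\ge\ell}a_i y^{q^i}=\sum_{i\ge\ell}a_i z^{q^{i-h}}=\bar f(z)$. Then, by \eqref{eq:curve}, $L_g\cap L_f\ne\emptyset$ is equivalent to the existence of an affine $\F_{q^n}$-rational point with $XY\ne0$ on the curve $\mathcal{C}_{g,\bar f}$ of equation $X^{q^k-1}=F(Y)$, where $F(Y)=\alpha^{-1}\!\left(\bar f(Y)/Y-\beta\right)$ is a polynomial of degree $q^{d-h}-1$ in $Y$.

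The key structural observation I would exploit is that $\bar f(Y)-\beta Y=\sum_{i=\ell}^d a_i Y^{q^{i-h}}-\beta Y$ is an $\F_q$-linearized polynomial, whence it equals $R(Y)^{q^{m_h}}$ for a separable linearized polynomial $R$ of degree $q^{d-h-m_h}$; here $m_h$ is precisely the least $q$-exponent occurring, which reproduces the three cases of the statement (the coefficient of $Y$ equals $a_h-\beta$ and vanishes exactly when $m_h>0$). Since $\alpha F(Y)=R(Y)^{q^{m_h}}/Y$ with $R$ separable and $R(0)=0$, I can read off the divisor of $F$ on $\mathbb{K}(y)$: a pole of order $q^{d-h}-1$ at $y=\infty$, a zero of order $q^{m_h}-1$ at $y=0$, and a zero of order $q^{m_h}$ at each of the $q^{d-h-m_h}-1$ nonzero roots of $R$. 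The case excluded from the hypothesis (a binomial with $h=\ell$ and $a_h=\beta$, already settled by \emph{Proposition~\ref{prop:sabbassa}}) is exactly the one with no nonzero root; outside it there is a place $Q$ with $(q^k-1,v_Q(F))=1$, which is what is needed below.

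With $m=q^k-1$ coprime to $p$ and $\mathbb{K}$ algebraically closed, \emph{Theorem~\ref{th:kummer}} applies to $X^{q^k-1}=F(Y)$: the curve $\mathcal{C}_{g,\bar f}$ is absolutely irreducible, and its genus is computed from the divisor above via $r_P=(q^k-1,v_P(F))$. I would record $r_P=q^{(k,d-h)}-1$ at $y=\infty$, $r_P=q^{(k,m_h)}-1$ at $y=0$, and $r_P=1$ at each nonzero root of $R$; the Kummer genus formula then gives $g(\mathcal{C}_{g,\bar f})=\tfrac12 q^{k+d-h-m_h}+(\text{lower-order terms})$, specialising to the value \eqref{eq:genere} when $m_h=0$. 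The same data bound the number of places to be discarded, namely the poles of $x,y$ and the zeros of $x$ or $y$, by $q^{(k,d-h)}+q^{(k,m_h)}+q^{d-h-m_h}$ (with the $m_h=0$ variant when $y=0$ is not a branch point).

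Finally I would invoke the Hasse--Weil bound (\emph{Theorem~\ref{th:hasseweil}}): $|\mathcal{C}_{g,\bar f}(\F_{q^n})|\ge q^n+1-2g(\mathcal{C}_{g,\bar f})\sqrt{q^n}$, and verify that under the stated numerical hypothesis this lower bound strictly exceeds the number of discarded places, so that a good point with $XY\ne0$ survives and yields the intersection. The main obstacle is exactly this last bookkeeping. The leading comparison $q^{k+d-h-m_h}\cdot q^{n/2}<q^n$ is what produces the term $k+d-h-m_h$ in the maximum; the secondary term $\tfrac{d-h}{2}$ and the loss of $1$ on the right-hand side when $m_h>\tfrac{d-h}{2}$ must come from bounding the subdominant contributions to the genus (those of size $q^{d-h-m_h}$ from the nonzero roots of $R$ and those governed by $q^{(k,m_h)}$ from the zero at $y=0$) and to the discarded-place count against the Hasse--Weil estimate. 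Phrasing these estimates so as to collapse an exact but unwieldy inequality into the clean two-regime condition of the statement is where the argument is tightest, and is the step I expect to require the most care.
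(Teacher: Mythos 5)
Your proposal is correct and follows essentially the same route as the paper: reduce to the Kummer curve $X^{q^k-1}=F(Y)$ with $F(Y)=(\bar f(Y)-\beta Y)/(\alpha Y)$, read off the divisor of $F$ (pole of order $q^{d-h}-1$ at infinity, zero of order $q^{m_h}-1$ at $0$, zeros of order $q^{m_h}$ at the $q^{d-h-m_h}-1$ nonzero roots), invoke Theorem~\ref{th:kummer} for irreducibility and the genus, discard the places where $x$ or $y$ vanish or blow up, and conclude by Hasse--Weil. Your factorization $\bar f(Y)-\beta Y=R(Y)^{q^{m_h}}$ with $R$ separable, and your explicit check that a place $Q$ with $(q^k-1,v_Q(F))=1$ exists outside the excluded binomial case, make transparent two points the paper leaves implicit; the final numerical bookkeeping you flag as delicate is likewise not spelled out in the paper's proof.
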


\begin{proof}
The curve $\mathcal{C}_{g,\bar{f}}$ has equation $X^{q^k-1}=F(Y)$, where
\[ F(Y)=\frac{\bar{f}(Y)-\beta Y}{\alpha Y}.\]
The valuations of the rational function $F(y)$ at the places of the rational function field $\mathbb{K}(y)$ are as follows.
The number of nonzero roots $\eta$ of $F(Y)$ is $q^{d-h-m_h}-1$; the valuation of $F(y)$ at the zero of $y-\eta$ is $q^{m_h}$.
The valuation of $F(y)$ is $-(q^{d-h}-1)$ at the pole of $y$, and $q^{m_h}-1$ at the zero of $y$.
The function $F(y)$ does not have any other zero or pole.
Thus, by Theorem \ref{th:kummer}, the equation $x^{q^k-1}=F(y)$ defines a Kummer extension $\mathbb{K}(x,y)\colon\mathbb{K}(y)$ of degree $q^k-1$, which is clearly the function field of $\mathcal{C}_{g,\bar{f}}$. Then $\mathcal{C}_{g,\bar{f}}$ is absolutely irreducible, with genus
\[g(\mathcal{C}_{g,\bar{f}})=1+(q^k-1)(0-1)+\frac{1}{2}\big((q^{d-h-m_h}-1)(q^k-2)+q^k-1-(q^{(k,d-h)}-1)\]
\[+q^k-1-(q^{(k,m_h)}-1)\big) = \frac{(q^k-2)(q^{d-h-m_h}-3)+2q^k-q^{(k,d-h)}-q^{(k,m_h)}}{2}. \]
On the curve $\mathcal{C}_{g,\bar{f}}$ there are exactly $q^{(k,d-h)}-1$ poles of $x$ or $y$, $q^{(k,m_h)}-1$ zeros of $y$, and $q^{d-h}-1$ zeros of $x$.
By Theorem \ref{th:hasseweil} and the numerical assumption, we have
\[ |\mathcal{C}_{g,\bar{f}}(\mathbb{F}_{q^n})|\geq q^n+1-2g(\mathcal{C}_{g,\bar{f}})\sqrt{q^n}> q^{(k,d-h)}-1 + q^{(k,m_h)}-1 + q^{d-h}-1. \]
Therefore, there exists an $\F_{q^n}$-rational affine point $(\bar{x},\bar{y})$ of $\mathcal{C}_{g,\bar{f}}$ with $\bar{x}\ne0$ and $\bar{y}\ne0$, whence the claim is proved.
\end{proof}



If $f(y)$ is not a monomial, define $\ell_3=\max\{i<d\colon a_i\ne0\}$.

\begin{theorem}\label{th:h>ell}
Assume that $h>\ell$ and $f(y)$ is not a monomial.
If $f(y)=a_d y^{q^d}+a_\ell y^{q^\ell}$ and $h=d$, assume also that $a_h\ne\beta$.
Let
\[
 m_h=\begin{cases} \max\{d,h\} & \textrm{if}\quad a_d\ne\beta\;\textrm{ or }\;d\ne h, \\ \ell_3 & \textrm{if}\quad a_d=\beta\;\textrm{ and }\; d=h. \end{cases}
\]
If $k+m_h-\ell\leq n/2$, then $L_g\cap L_f\ne\emptyset$.
\end{theorem}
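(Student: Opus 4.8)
The plan is to mirror the strategy of Theorem \ref{th:h<ell_dritto}, reducing the intersection to the rational points of a single Kummer curve. First I would rewrite $L_f$ in a normalized form. Since $h>\ell$, the substitution $\bar z=\bar y^{q^\ell}$ is a bijection of $\F_{q^n}^*$, and under it the defining condition $\bar x f(\bar y)=\bar y^{q^h}g(\bar x)$ becomes $\bar x\,\tilde f(\bar z)=\bar z^{q^{h-\ell}}g(\bar x)$. Hence $L_g\cap L_f\ne\emptyset$ if and only if the curve
\[ \mathcal{C}\colon\quad X^{q^k-1}=F(Z),\qquad F(Z)=\frac{\tilde f(Z)-\beta Z^{q^{h-\ell}}}{\alpha Z^{q^{h-\ell}}} \]
has an $\F_{q^n}$-rational affine point with nonzero coordinates; this is exactly the Kummer-type shape treated before, now with pole exponent $q^{h-\ell}$ in the denominator.

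Next I would analyze the valuations of $F(z)$ in the rational function field $\K(z)$. Writing $N(Z)=\tilde f(Z)-\beta Z^{q^{h-\ell}}$, this is a separable $q$-polynomial whose lowest term is $a_\ell Z$ and whose top $q$-degree $\deg_q N$ is precisely $m_h-\ell$: the branch $m_h=\max\{d,h\}$ records the usual leading exponent, while the branch $m_h=\ell_3$ accounts for the cancellation of the two degree-$q^{h-\ell}$ terms occurring exactly when $d=h$ and $a_d=\beta$ (the binomial exclusion in the hypothesis guarantees $\ell<\ell_3$, so $m_h-\ell\ge1$ here as well). Consequently $N$ has $q^{m_h-\ell}-1$ nonzero roots, each simple, where $v(F)=1$; moreover $v_0(F)=1-q^{h-\ell}$ and $v_\infty(F)=q^{h-\ell}-q^{\deg_q N}$. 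Since $f$ is not a monomial we always have $m_h-\ell\ge1$, so a simple nonzero root of $N$ provides a place $Q$ with $(q^k-1,v_Q(F))=1$; by Theorem \ref{th:kummer} the polynomial $T^{q^k-1}-F(z)$ is irreducible, $\mathcal{C}$ is absolutely irreducible, and its genus is computable from the displayed valuations.

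The third step is the genus estimate and the Hasse-Weil argument. Because each place contributes at most $q^k-1$ to the ramification sum of Theorem \ref{th:kummer}, the dominant term is the one coming from the $q^{m_h-\ell}-1$ nonzero roots of $N$, giving $g(\mathcal{C})$ of order $\tfrac12 q^{k+m_h-\ell}$; the places at $0$ and $\infty$, with $r_P$ equal to $q^{(k,\,h-\ell)}-1$ and $q^{(k,\,|h-\ell-\deg_q N|)}-1$ respectively, contribute lower-order terms. I would then invoke Theorem \ref{th:hasseweil}: under the hypothesis $k+m_h-\ell\le n/2$ the lower bound $|\mathcal{C}(\F_{q^n})|\ge q^n+1-2g(\mathcal{C})\sqrt{q^n}$ strictly exceeds the total number of places that are zeros or poles of $x$ or $z$ (the pole at $0$, the $q^{\deg_q N}-1$ zeros of $x$, and the zeros of $z$). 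This surplus forces an $\F_{q^n}$-rational place centred at an affine point $(\bar x,\bar z)$ with $\bar x\ne0$ and $\bar z\ne0$, which yields a common point of $L_g$ and $L_f$.

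The main obstacle I expect is the uniform bookkeeping in the genus computation and the point count, especially the cancellation case $d=h$, $a_d=\beta$: there the leading terms disappear and the denominator $Z^{q^{h-\ell}}$ creates a pole at $0$ of order $q^{h-\ell}-1$, much larger than $q^{m_h-\ell}=q^{\ell_3-\ell}$, yet—crucially—a single place contributes only $O(q^k)$ to the genus, so $g(\mathcal{C})$ is still governed by the number $q^{\ell_3-\ell}-1$ of nonzero roots of $N$ rather than by the pole order. Checking that the Hasse-Weil surplus is strictly positive in every sub-case (including the non-tight sub-case $\beta=0$, $d<h$, where $\deg_q N=d-\ell$ is smaller than $m_h-\ell$, so the stated bound is merely sufficient) is the only delicate point; the rest is the translation already carried out in Theorem \ref{th:h<ell_dritto}.
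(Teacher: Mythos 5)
Your proposal is correct and follows essentially the same route as the paper: the same curve $X^{q^k-1}=\bigl(\tilde f(Y)-\beta Y^{q^{h-\ell}}\bigr)/\bigl(\alpha Y^{q^{h-\ell}}\bigr)$, the same Kummer-extension valuation analysis (irreducibility from a simple nonzero root of the numerator), the same genus formula, and the same Hasse--Weil count against the zeros and poles of the coordinate functions. Your explicit tracking of $\deg_q N$ in the sub-case $\beta=0$, $h>d$ (where it equals $d-\ell$ rather than $m_h-\ell$) is in fact slightly more careful than the paper's uniform statement, but it does not change the argument or the conclusion.
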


\begin{proof}
The curve $\mathcal{C}^{h-\ell}_{g,\tilde{f}}$ has equation $X^{q^k-1}=F(Y)$ where \[ F(Y)=\frac{\tilde{f}(Y)-\beta Y^{q^{h-\ell}}}{\alpha Y^{q^{h-\ell}}}.\]
Then $\tilde{f}(Y)-\beta Y^{q^{h-\ell}}$ has exactly $q^{m_h-\ell}-1$ nonzero simple roots $\eta\in\mathbb{K}$; the valuation of $F(y)$ at the zero of $y-\eta$ in $\mathbb{K}(y)$ is $1$.
Also, the valuation of $F(y)$ is $-q^{m_h-\ell}+q^{h-\ell}=-q^{h-\ell}(q^{m_h-h}-1)$ at the pole of $y$, and $-(q^{h-\ell}-1)$ at the zero of $y$.
The function $F(y)$ has no other zeros or poles in $\mathbb{K}(y)$.
Then, by Theorem \ref{th:kummer}, the equation $x^{q^k-1}=F(y)$ defines a Kummer extension $\mathbb{K}(x,y)\colon\mathbb{K}(y)$ of degree $q^k-1$, which is clearly the function field of $\mathcal{C}^{h-\ell}_{g,\tilde{f}}$.
Thus, $\mathcal{C}^{h-\ell}_{g,\tilde{f}}$ is absolutely irreducible; by Theorem \ref{th:kummer},
\[ g(\mathcal{C}^{h-\ell}_{g,\tilde{f}})=\frac{(q^k-2)(q^{m_h-\ell}-3)+2q^k-q^{(k,h-\ell)}-q^{(k,m_h-h)}}{2}. \]
The function $y$ has exactly $q^{(k,m_h-h)}-1$ poles and $q^{(k,h-\ell)}-1$ zeros on $\mathcal{C}_{g,\tilde{f}}$; the poles of $x$ are zeros or poles of $y$; the number of zeros of $x$ which are not poles of $y$ is $q^{m_h-\ell}-1$.
By Theorem \ref{th:hasseweil} and the numerical assumption, we have
\[ |\mathcal{C}^{h-\ell}_{g,\tilde{f}}|\geq q^n+1-2g(\mathcal{C}^{h-\ell}_{g,\bar{f}})\sqrt{q^n}> q^{(k,m_h-h)}-1 + q^{(k,h-\ell)}-1 + q^{m_h-\ell}-1. \]
Hence,
there exists an $\F_{q^n}$-rational affine point $(\bar{x},\bar{y})$ of $\mathcal{C}^{h-\ell}_{g,\tilde{f}}$ with $\bar{x}\ne0$ and $\bar{y}\ne0$. The claim is now proved.
\end{proof}

Now we investigate the intersection between the $\fq$-linear sets
\[
L_g=\{\langle(x,g(x))\rangle_{\F_{q^n}}\colon x\in\F_{q^n}^*\},\quad \sigma(L_f)=\{\langle(f(y),y)\rangle_{\F_{q^n}}\colon y\in\F_{q^n}^*\}.
\]

Proposition \ref{prop:pointinv} considers the case when $f(y)$ is a monomial.

\begin{proposition}\label{prop:pointinv}
Let $f(y)=a_d y^{q^d}$.
\begin{itemize}
    \item If $d=h$ or $\beta=0$, then $L_g\cap \sigma(L_f)\ne\emptyset$ if and only if $\mathrm{N}_{q^n/q^e}\left(\frac{1-\beta a_d}{\alpha a_d}\right)=1$, where $e=(n,k,d-h)$.
    \item If $d\ne h$, $\beta\ne0$ and $k+|d-h|\leq\frac{n}{2}$, then $L_g\cap\sigma(L_f)\ne\emptyset$.
\end{itemize}
\end{proposition}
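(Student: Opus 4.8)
The plan is to translate the intersection into a curve and then to mirror the monomial analysis already carried out for $L_g\cap L_f$ in Proposition \ref{prop:h=dmon} and Theorem \ref{th:mon}. First I would write $L_g\cap\sigma(L_f)\ne\emptyset$ as the existence of $\bar x,\bar y\in\F_{q^n}^*$ with $\langle(\bar x,g(\bar x))\rangle=\langle(a_d\bar y^{q^d},\bar y^{q^h})\rangle$, i.e. with
\[ \alpha\bar x^{q^k-1}+\beta=\frac{1}{a_d}\,\bar y^{q^h-q^d}; \]
equivalently, the curve $\mathcal{X}\colon X^{q^k-1}=\frac{1}{\alpha a_d}Y^{q^h-q^d}-\frac{\beta}{\alpha}$ must have an affine $\F_{q^n}$-rational point with nonzero coordinates.

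For the first bullet I would split into $d=h$ and $\beta=0$. If $d=h$ the set $\sigma(L_f)$ degenerates to the single point $\langle(a_d,1)\rangle$, which lies on $L_g$ exactly when $\bar x^{q^k-1}=\frac{1-\beta a_d}{\alpha a_d}$ is solvable in $\F_{q^n}^*$, that is, when $\N_{q^n/q^{(n,k)}}\!\left(\frac{1-\beta a_d}{\alpha a_d}\right)=1$; since $e=(n,k,0)=(n,k)$ this is the assertion, and the argument is identical to Proposition \ref{prop:h=dmon}. If $\beta=0$ the defining equation reduces to $\bar x^{q^k-1}\bar y^{q^d-q^h}=\frac{1}{\alpha a_d}$; the image of the map $(\bar x,\bar y)\mapsto\bar x^{q^k-1}\bar y^{q^d-q^h}$ on $\F_{q^n}^*\times\F_{q^n}^*$ is the subgroup of $(q^e-1)$-th powers of $\F_{q^n}^*$ with $e=(n,k,d-h)$ (using $\gcd(q^a-1,q^b-1)=q^{(a,b)}-1$ and that $q^{\min\{d,h\}}$ is prime to $q^n-1$), so a solution exists precisely when $\N_{q^n/q^e}\!\left(\frac{1}{\alpha a_d}\right)=1$. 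As $\frac{1-\beta a_d}{\alpha a_d}=\frac{1}{\alpha a_d}$ for $\beta=0$, both subcases give the stated norm condition, reproducing the structure of the first bullet of Theorem \ref{th:mon}.

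For the second bullet ($d\ne h$, $\beta\ne0$) the curve $\mathcal{X}$ as written has high degree, so I would first lower the degree of the representation of $\sigma(L_f)$. If $d>h$, replacing the parameter $y$ by $y^{q^{n-h}}$ gives $\sigma(L_f)=\{\langle(a_d y^{q^{d-h}},y)\rangle_{\F_{q^n}}\colon y\in\F_{q^n}^*\}$, and the corresponding curve $X^{q^k-1}=\frac{1}{\alpha a_d}Y^{1-q^{d-h}}-\frac{\beta}{\alpha}$ is carried by the projectivity $(X,Y)\mapsto(X,1/Y)$ onto a curve of the exact shape \eqref{eq:curveCgfmon}, with $a_d$ replaced by $a_d^{-1}$. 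If $d<h$, replacing $y$ by $y^{q^{n-d}}$ gives $\sigma(L_f)=\{\langle(y,a_d^{-1}y^{q^{h-d}})\rangle_{\F_{q^n}}\colon y\in\F_{q^n}^*\}$, a monomial linear set of $q$-degree $h-d$, so that $L_g\cap\sigma(L_f)$ falls directly under the monomial instance of Theorem \ref{th:mon}. In either case the governing curve is absolutely irreducible of the genus \eqref{eq:genere} (with $|d-h|$ in place of $d-h$) by Theorem \ref{th:kummer}, and under $k+|d-h|\le n/2$ the Hasse-Weil bound (Theorem \ref{th:hasseweil}) forces more $\F_{q^n}$-rational points than the bounded number of zeros and poles of $x$ and $y$, producing an affine point with nonzero coordinates.

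The genus estimate and the Hasse-Weil count are not the hard part, since they coincide with the computation already made in Theorem \ref{th:mon}; the delicate step is the bookkeeping of the reduction. I would need to check that the low-degree representations of $\sigma(L_f)$ are correct, that the two projectivities involved (the coordinate swap $\sigma$ and the inversion $(X,Y)\mapsto(X,1/Y)$) preserve absolute irreducibility, genus, and the locus of affine $\F_{q^n}$-rational points with nonzero coordinates, and that $\beta\ne0$ guarantees the place at infinity of $\mathbb{K}(y)$ is neither a zero nor a pole of the right-hand side, so that $\mathcal{X}$ genuinely has the shape \eqref{eq:curveCgfmon}. The remaining subtlety is to confirm that the coefficient $\frac{1-\beta a_d}{\alpha a_d}$ and the index $e=(n,k,d-h)$ match consistently across the $d=h$ and $\beta=0$ regimes, which is what makes the two subcases of the first bullet collapse into a single norm condition.
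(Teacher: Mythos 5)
Your proposal is correct and follows essentially the same route as the paper: the paper's proof is the one-line observation that $\sigma(L_f)=\{\langle(y^{q^d},a_d^{-1}y^{q^h})\rangle_{\F_{q^n}}\colon y\in\F_{q^n}^*\}$, i.e.\ a monomial linear set with $h$ and $d$ interchanged and coefficient $a_d^{-1}$, after which Proposition \ref{prop:h=dmon} and Theorem \ref{th:mon} apply verbatim and yield exactly the norm condition $\mathrm{N}_{q^n/q^e}\bigl(\frac{1-\beta a_d}{\alpha a_d}\bigr)=1$ and the bound $k+|d-h|\leq n/2$. You unpack that reduction case by case (including re-deriving the $\beta=0$ subgroup argument and the inversion $(X,Y)\mapsto(X,1/Y)$ onto the curve \eqref{eq:curveCgfmon}), but the underlying argument is the same.
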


\begin{proof}
As
\[
\sigma(L_f)=\{\langle(y^{q^d},a_d^{-1}y^{q^h})\rangle_{\F_{q^n}}\colon y\in\F_{q^n}^*\},
\]
the claim follows from Proposition \ref{prop:h=dmon} and Theorem \ref{th:mon}.
\end{proof}


We now consider the case when $f(y)$ is not a monomial and $h\leq\ell$.

\begin{theorem}\label{th:h<ell_girato}
Assume that $h\leq \ell$ and $f(y)$ is not a monomial.
Let
\[
m_h=\begin{cases}
\ell_2 & \textrm{if}\quad \beta a_h=1, \\ h & \textrm{if}\quad \beta\ne0\quad \textrm{and}\quad\beta a_h\ne1, \\
d & \textrm{if}\quad \beta=0.
\end{cases}
\]
If $k+d-\min\{m_h,\ell\}+1\leq n/2$, then $L_g\cap\sigma(L_f)\ne\emptyset$.
\end{theorem}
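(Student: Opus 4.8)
The plan is to reduce $L_g\cap\sigma(L_f)\ne\emptyset$ to the existence of a suitable rational point on a Kummer curve, following the scheme of Theorems \ref{th:h<ell_dritto} and \ref{th:h>ell}. Since $h\le\ell$, I write $L_f=\{\langle(y,\bar f(y))\rangle_{\F_{q^n}}\colon y\in\F_{q^n}^*\}$ with $\bar f(y)=\sum_{i=\ell}^d a_iy^{q^{i-h}}$, so that $\sigma(L_f)=\{\langle(\bar f(y),y)\rangle_{\F_{q^n}}\colon y\in\F_{q^n}^*\}$. Then $L_g$ and $\sigma(L_f)$ share a point if and only if $\frac{g(\bar x)}{\bar x}=\frac{\bar y}{\bar f(\bar y)}$ for some $\bar x,\bar y\in\F_{q^n}^*$, that is, if and only if the curve $\mathcal{X}_{g,\bar f}$ with affine model
\[ X^{q^k-1}=F(Y),\qquad F(Y)=\frac{Y-\beta\bar f(Y)}{\alpha\bar f(Y)}, \]
has an $\F_{q^n}$-rational affine point with nonzero coordinates.

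First I would compute the divisor of $F$ in the rational function field $\mathbb{K}(y)$. The denominator $\alpha\bar f(Y)$ has $q$-degree $d-h$ and, being additive, vanishes at $0$ with multiplicity $q^{\ell-h}$ and at $q^{d-\ell}-1$ nonzero points, each with multiplicity $q^{\ell-h}$; since $\bar f(\eta)=0$ forces $N(\eta)=\eta$ for the numerator $N(Y)=Y-\beta\bar f(Y)$, numerator and denominator share no root besides $0$, so each nonzero root of $\bar f$ is a pole of $F$ of order $q^{\ell-h}$. The behaviour of $F$ at $0$, at $\infty$, and at the nonzero zeros of $N$ is exactly what splits into the three cases of the statement, according to the lowest surviving $q$-degree of $N$: it is $0$ when $\beta a_h\ne1$ (the generic case, $m_h=h$), it degenerates to the single monomial $N=Y$ when $\beta=0$ (so the only zero of $F$ is at infinity, of order $q^{d-h}-1$, and $m_h=d$), and it equals $\ell_2-h$ when $\beta a_h=1$ and hence $h=\ell$ (the $Y$-term of $N$ cancels and $m_h=\ell_2$). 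In each case a direct count, together with the identity $(q^k-1,q^m-1)=q^{(k,m)}-1$, yields the complete list of zeros and poles of $F$ with their orders.

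I would then apply Theorem \ref{th:kummer}. Any nonzero root of $\bar f$ gives a place $Q$ with $v_Q(F)=-q^{\ell-h}$, and $(q^k-1,q^{\ell-h})=1$, so $X^{q^k-1}=F(Y)$ defines an irreducible Kummer extension of $\mathbb{K}(y)$ of degree $q^k-1$; hence $\mathcal{X}_{g,\bar f}$ is absolutely irreducible and its genus is supplied by the genus formula in Theorem \ref{th:kummer}. The dominant contribution comes either from the simple zeros of $F$ (generic case) or from the order-$q^{\ell-h}$ poles at the nonzero roots of $\bar f$ (the two degenerate cases), and in all cases it is of order $\tfrac12 q^{k+d-\min\{m_h,\ell\}}$; the remaining ramification, located over $0$ and $\infty$, contributes only lower-order terms.

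Finally I would invoke the Hasse--Weil bound (Theorem \ref{th:hasseweil}). The hypothesis $k+d-\min\{m_h,\ell\}+1\le n/2$ makes $2\,g(\mathcal{X}_{g,\bar f})\sqrt{q^n}$ of size at most about $q^{n-1}$, so that $q^n+1-2g\sqrt{q^n}$ exceeds the number of $\F_{q^n}$-rational places that are zeros or poles of $x$ or of $y$; these lie over $Y\in\{0,\infty\}$ or over the nonzero roots of $\bar f$ and of $N$, and their number is bounded by a quantity of the form $q^{(k,\cdot)}-1+q^{d-\min\{m_h,\ell\}}-1+\cdots$, negligible against $q^{n-1}$. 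Thus $\mathcal{X}_{g,\bar f}$ has an $\F_{q^n}$-rational affine point with $\bar x\ne0$ and $\bar y\ne0$, which yields the desired common point of $L_g$ and $\sigma(L_f)$. The hard part is the bookkeeping across the three cases: one must track the multiplicities at $0$ and $\infty$ (which change according to whether $\beta=0$, whether $h<\ell$ or $h=\ell$, and whether $\beta a_h=1$), evaluate the genus precisely in each case, and verify in each case that the Hasse--Weil estimate strictly beats the explicit count of forbidden points.
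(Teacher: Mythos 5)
Your proposal is correct and follows essentially the same route as the paper: the same curve $X^{q^k-1}=\frac{Y-\beta\bar f(Y)}{\alpha\bar f(Y)}$, the same divisor computation split into the three cases $\beta a_h\ne 1$, $\beta=0$, $\beta a_h=1$ (with the same valuations at $0$, $\infty$, the nonzero roots of the numerator, and the order-$q^{\ell-h}$ poles at the nonzero roots of $\bar f$), the same Kummer-extension irreducibility and genus argument, and the same Hasse--Weil comparison against the count of zeros and poles of the coordinate functions. The only difference is that you leave the final case-by-case bookkeeping indicated rather than written out, but the quantities you identify (genus of order $\tfrac12 q^{k+d-\min\{m_h,\ell\}}$, bad places over $Y\in\{0,\infty\}$ and the roots of $\bar f$ and of the numerator) match the paper's.
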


\begin{proof}
The curve $\mathcal{X}_{g,\bar{f}}^0$ has plane model $X^{q^k-1}=F(Y)$, where
\[ F(Y)=\frac{Y-\beta \bar{f}(Y)}{\alpha\bar{f}(Y)}. \]
The valuation of the rational function $F(y)$ at the places of the rational function field $\mathbb{K}(y)$ is as follows. The valuation of $F(y)$ is $-(q^{\ell-h}-1)$ or $q^{\ell-h}(q^{\ell_2-\ell}-1)$ at the zero $P_0$ of $y$, according to $\beta a_h\ne1$ or $\beta a_h=1$, respectively.
The valuation of $F(y)$ at the pole $P_\infty$ of $y$ is either $0$ or $q^{d-h}-1$ according to $\beta\ne0$ or $\beta=0$, respectively.
The number of nonzero distinct roots $\eta\in\mathbb{K}$ of $Y-\beta\bar{f}(Y)$ is $0$, or $q^{d-h}-1$, or $q^{d-\ell_2}-1$, according to $\beta=0$, or $\beta\ne0$ and $\beta a_h\ne1$, or $\beta a_h=1$, respectively; the valuation of $F(y)$ at the zero $P_\eta$ of $y-\eta$ is $1$ or $q^{\ell_2-h}$ according to $\beta a_h\ne1$ or $\beta a_h=1$, respectively.
The valuation of $F(y)$ at the zero of $y-\xi$ is $-q^{\ell-h}$, when $\xi$ ranges over the $q^{d-\ell}-1$ nonzero distinct roots of $\bar{f}(Y)$.
Then, by Theorem \ref{th:kummer}, the equation $x^{q^k-1}=F(y)$ defines a Kummer extension $\mathbb{K}(x,y)\colon\mathbb{K}(y)$ of degree $q^k-1$, which is clearly the function field of $\mathcal{X}_{g,\bar{f}}^0$.
Thus, $\mathcal{X}_{g,\bar{f}}^0$ is absolutely irreducible; by Theorem \ref{th:kummer},
\[
g(\mathcal{X}_{g,\bar{f}}^0)= \frac{(q^k-2)(q^{d-m_h}+q^{d-\ell}-4)+\varepsilon_{h}}{2},
\]
where $\varepsilon_h=2q^k-2-(q^k-1,v_{P_\infty}(F(y)))-(q^k-1,v_{P_0}(F(y)))$.
The function $y$ has exactly $(q^k-1,v_{P_\infty}(F(y)))$ poles and $(q^k-1,v_{P_0}(F(y)))$ zeros on the curve $\mathcal{X}_{g,\tilde{f}}^0$. The function $x$ has exactly $q^{d-\ell}-1$ poles. The zeros of $x$ are the places over $P_\eta$ for some $\eta$, and, if $\beta=0$, also the places over $P_\infty$; hence, the zeros of $x$ which are not poles of $y$ are $q^{d-m_h}-1$. Altogether, denote by $Z$ the number of zeros and poles of $x$ and $y$ on $\mathcal{X}_{g,\bar{f}}^0$.

By Theorem \ref{th:hasseweil} and the numerical assumption,
\[ |\mathcal{X}_{g,\bar{f}}^0(\mathbb{F}_{q^n})|\geq q^n+1-2g(\mathcal{X}_{g,\bar{f}}^0)>Z. \]
Thus, there exists an $\mathbb{F}_{q^n}$-rational affine point of $\mathcal{X}_{g,\bar{f}}^0$ with nonzero coordinates, and the claim follows.
\end{proof}

\begin{theorem}\label{th:h>lrib}
Assume that $h>\ell$ and $f(y)$ is not a monomial. Let
\[
m_h=\begin{cases}
\ell, & \textrm{if}\quad \beta=0;\\
\max\{d,h\}, & \textrm{if}\quad \beta\ne0\quad \textrm{and either}\quad h\ne d\quad\textrm{or}\quad \beta a_h\ne1;\\
\ell_3,&\textrm{if}\quad h=d\quad\textrm{and}\quad \beta a_h=1.\\
\end{cases}
\]
If $k+\max\{m_h,d\}-\ell+1\leq n/2$, then $L_g\cap\sigma(L_f)\ne\emptyset$.
\end{theorem}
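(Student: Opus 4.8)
The plan is to follow the template of Theorems \ref{th:h>ell} and \ref{th:h<ell_girato}: realize $L_g\cap\sigma(L_f)$ as the set of good rational points of a Kummer cover of the projective line, bound its genus via Theorem \ref{th:kummer}, and apply the Hasse--Weil lower bound of Theorem \ref{th:hasseweil}.

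First I would pass to a low-degree model. Since $h>\ell$ we have $\ell\leq h$, so the bijection $u=y^{q^\ell}$ of $\F_{q^n}^*$ gives $\sigma(L_f)=\{\langle(\tilde f(u),u^{q^{h-\ell}})\rangle_{\F_{q^n}}\colon u\in\F_{q^n}^*\}$. Hence $L_g\cap\sigma(L_f)\neq\emptyset$ if and only if the curve $\mathcal{X}_{g,\tilde f}^{h-\ell}$ with plane model $X^{q^k-1}=F(Y)$, where
\[ F(Y)=\frac{Y^{q^{h-\ell}}-\beta\,\tilde f(Y)}{\alpha\,\tilde f(Y)}, \]
has an $\F_{q^n}$-rational affine point $(\bar x,\bar y)$ with $\bar x,\bar y\neq0$; indeed such a point satisfies $g(\bar x)/\bar x=\bar y^{q^{h-\ell}}/\tilde f(\bar y)$ with $\tilde f(\bar y)\neq0$, giving the common point $\langle(\bar x,g(\bar x))\rangle=\langle(\tilde f(\bar y),\bar y^{q^{h-\ell}})\rangle$.

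Next I would compute the divisor of $F$ in $\K(y)$. The denominator $\tilde f(Y)$ is a separable $q$-polynomial of degree $q^{d-\ell}$ with nonzero linear term, contributing $q^{d-\ell}-1$ simple poles of $F$ at its nonzero roots $\xi$; none cancels against the numerator $N(Y)=Y^{q^{h-\ell}}-\beta\tilde f(Y)$, since $\tilde f(\xi)=0$, $\xi\neq0$ forces $N(\xi)=\xi^{q^{h-\ell}}\neq0$. The polynomial $N$ is where $m_h$ enters: the coefficient of $Y^{q^{h-\ell}}$ is $1-\beta a_h$, so for $\beta\neq0$ the polynomial $N$ is separable with $q^{m_h-\ell}-1$ nonzero roots (the simple zeros of $F$), where $m_h=\max\{d,h\}$ in general and $m_h=\ell_3$ exactly when the leading term cancels, i.e. $h=d$ and $\beta a_h=1$; for $\beta=0$ one has $N=Y^{q^{h-\ell}}$, $m_h=\ell$, and no nonzero zeros. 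I would then record $v_0(F)$ (which is $0$ if $\beta\neq0$ and $q^{h-\ell}-1$ if $\beta=0$) and $v_\infty(F)=q^{d-\ell}-\deg_Y N$, and deduce the ramification from $r_P=(q^k-1,v_P(F))$ together with $(q^k-1,q^a-q^b)=q^{(k,a-b)}-1$.

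I would then apply Theorem \ref{th:kummer}. Irreducibility holds uniformly, because $d>\ell$ makes $\tilde f$ have a nonzero root, hence $F$ a simple pole with $(q^k-1,-1)=1$. The genus formula then gives $g(\mathcal{X}_{g,\tilde f}^{h-\ell})=\tfrac12(q^k-2)(q^{d-\ell}+q^{m_h-\ell})$ up to lower-order terms, so that $2g\sqrt{q^n}=O\!\big(q^{\,k+\max\{m_h,d\}-\ell+n/2}\big)$. By Theorem \ref{th:hasseweil}, the hypothesis $k+\max\{m_h,d\}-\ell+1\leq n/2$ pushes $2g\sqrt{q^n}$ below $q^{n-1}$, so $q^n+1-2g\sqrt{q^n}$ exceeds the number $Z$ of zeros and poles of $x$ and $y$ on the curve — a quantity of order $q^{\max\{m_h,d\}-\ell}+q^k$ — yielding the desired affine point. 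I expect the main obstacle to be the case-by-case tracking of $v_0(F)$ and $v_\infty(F)$: the degenerate collapse at $h=d$, $\beta a_h=1$ (where the top term of $N$ vanishes and $\deg_Y N$ drops to $q^{\ell_3-\ell}$) and the monomial behaviour at $\beta=0$ (where $y=0$ becomes a zero of $F$ of order $q^{h-\ell}-1$ while $y=\infty$ turns into a zero or a pole according to the sign of $d-h$) must each be followed through both the genus and the count $Z$. The summand $+1$ in the hypothesis, absent from the $L_f$ statement of Theorem \ref{th:h>ell}, is exactly the slack absorbing these lower-order terms in the final Hasse--Weil inequality.
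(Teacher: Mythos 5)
Your proposal is correct and follows essentially the same route as the paper's own proof: the same reduction to the curve $\mathcal{X}_{g,\tilde f}^{h-\ell}$ via the substitution $u=y^{q^\ell}$, the same case-by-case computation of the divisor of $F(y)$ (including the degenerate collapse at $h=d$, $\beta a_h=1$ and the $\beta=0$ case), the same Kummer-extension genus bound from Theorem \ref{th:kummer}, and the same Hasse--Weil count against the zeros and poles of the coordinate functions. Your valuations $v_{P_0}(F)$ and $v_{P_\infty}(F)=q^{d-\ell}-\deg_Y N$ and your root counts for $N$ agree with the paper's, so the argument goes through as you describe.
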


\begin{proof}
The curve $\mathcal{X}_{g,\tilde{f}}^{h-\ell}$ has equation $X^{q^k-1}=F(Y)$ where \[ F(Y)=\frac{Y^{q^{h-\ell}}-\beta\tilde{f}(Y)}{\alpha\tilde{f}(Y)}.\]
The rational function $F(y)$ has the following valuations at the places of the rational function field $\mathbb{K}(y)$.
The valuation of $F(y)$ at the zero $P_0$ of $y$ is either $0$ or $q^{h-\ell}-1$ according to $\beta\ne0$ or $\beta=0$, respectively.
The valuation of $F(y)$ at the pole $P_\infty$ of $y$ is
\[
v_{P_\infty}(F(y))=\begin{cases}
0,&\textrm{if}\quad \beta\ne0\quad\textrm{and}\quad h<d; \\
0, & \textrm{if}\quad\beta\ne0\quad\textrm{and}\quad h=d\quad\textrm{and}\quad\beta a_d\ne1; \\
q^{\ell_3 - \ell}(q^{d-\ell_3}-1),&\textrm{if}\quad h=d\quad\textrm{and}\quad \beta a_d=1;\\
-q^{d-\ell}(q^{h-d}-1),&\textrm{if}\quad\beta=0\quad\textrm{or}\quad h>d.\\
\end{cases}
\]
The number of nonzero roots $\eta\in\mathbb{K}$ of $Y^{q^{h-\ell} }-\beta\tilde{f}(Y)$ is: $0$, if $\beta=0$; $q^{\max\{d,h\}-\ell}-1$, if $\beta\ne0$ and either $h\ne d$ or $\beta a_d\ne 1$; $q^{\ell_3-\ell}-1$, if $h=d$ and $\beta a_d=1$. The valuation of $F(y)$ at the zero of $y-\eta$ is $1$.
The number of nonzero roots $\xi\in\mathbb{K}$ of $\tilde{f}(Y)$ is $q^{d-\ell}-1$, and the valuation of $F(y)$ at the zero of $y-\xi$ is $-1$.

Then, by Theorem \ref{th:kummer}, the equation $x^{q^k-1}=F(y)$ defines a Kummer extension $\mathbb{K}(x,y)\colon\mathbb{K}(y)$ of degree $q^k-1$, which is the function field of the curve $\mathcal{X}_{g,\tilde{f}}^{h-\ell}$. Thus, $\mathcal{X}_{g,\tilde{f}}^{h-\ell}$ is absolutely irreducible; by Theorem \ref{th:kummer}, its genus is
\[
g(\mathcal{X}_{g,\tilde{f}}^{h-\ell})=\frac{(q^k-2)(q^{m_h-\ell}+q^{d-\ell}-4)+\varepsilon_h}{2},
\]
where $\varepsilon_h=2q^k-2-(q^k-1,v_{P_{\infty}}(F(y)))-(q^k-1,v_{P_{0}}(F(y)))$.
The number $Z$ of zeros or poles on $\mathcal{X}_{g,\tilde{f}}^{h-\ell}$ of the coordinate functions is given as follows.
The number of zeros and poles of $y$ is respectively  $(q^k-1,v_{P_{0}}(F(y)))$ and $(q^k-1,v_{P_{\infty}}(F(y)))$; the number of poles of $x$ which are not poles of $y$ is $q^{d-\ell}-1$; the number of zeros of $x$ which are neither zeros nor poles of $y$ is $q^{m_h-\ell}-1$.
From Theorem \ref{th:hasseweil} and the numerical assumption follows
\[
|\mathcal{X}_{g,\tilde{f}}^{h-\ell}(\F_{q^n})|\geq q^n+1-2g(\mathcal{X}_{g,\tilde{f}}^{h-\ell})\sqrt{q^n}> Z.
\]
Thus, there exists an affine $\F_{q^n}$-rational point of $\mathcal{X}_{g,\tilde{f}}^{h-\ell}$ with nonzero coordinates. The claim follows.
\end{proof}

Corollary \ref{cor:h<ell_dritto_aggiunto} follows from Theorem \ref{th:h<ell_dritto} and Remark \ref{rem:aggiunto}.

\begin{corollary}\label{cor:h<ell_dritto_aggiunto}
Assume that $f(y)$ is not a monomial.
If $\ell\ne0$, assume $h\geq d$; if $\ell=0$, assume $h=0$.
If $f(y)=a_d y^{q^d}+a_\ell y^{q^\ell}$ and one of the following two cases holds:
\begin{itemize}
    \item $\ell\ne0$ and $h=d$,
    \item $\ell=0$ and $h=0$,
\end{itemize}
then assume also $a_h\ne\beta$.
Let
\[
\hat{m}_{h}=\begin{cases}
0,&\textrm{if}\quad a_{h}\ne\beta, \\
h-d,&\textrm{if}\quad a_{h}=\beta=0, \\
h-\ell_3,&\textrm{if}\quad a_{h}=\beta\ne0\quad\textrm{and}\quad \ell\ne0, \\
n-d,&\textrm{if}\quad a_h=\beta\ne0\quad\textrm{and}\quad \ell=0.
\end{cases}
\]
If either $\ell\ne0$ and
\[
\max\left\{ k+h-\ell-\hat{m}_h ,\frac{h-\ell}{2}\right\}\leq\begin{cases} \frac{n}{2} & \textrm{if}\quad \hat{m}_h\leq\frac{h-\ell}{2}, \\ \frac{n}{2}-1 & \textrm{if}\quad \hat{m}_h>\frac{h-\ell}{2}, \end{cases}
\]
or $\ell=0$ and
\[
\max\left\{ k+n-\ell_2-\hat{m}_h ,\frac{n-\ell_2}{2}\right\}\leq\begin{cases} \frac{n}{2} & \textrm{if}\quad \hat{m}_h\leq\frac{n-\ell_2}{2}, \\ \frac{n}{2}-1 & \textrm{if}\quad \hat{m}_h>\frac{n-\ell_2}{2}, \end{cases}
\]

then $L_{g}$ and $L_f$ share at least one point.
\end{corollary}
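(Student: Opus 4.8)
The plan is to deduce the statement directly from Theorem~\ref{th:h<ell_dritto}, applied not to $f(y)$ but to the adjoint description of $L_f$ supplied by Remark~\ref{rem:aggiunto}. By that remark we may write
\[
L_f=\left\{\left\langle\left(y^{q^{n-h}},\bar{\bar{f}}(y)\right)\right\rangle_{\F_{q^n}}\colon y\in\F_{q^n}^*\right\},\qquad \bar{\bar{f}}(y)=\sum_{i=\ell}^{d}a_i^{q^{n+h-i}}y^{q^{n-i}},
\]
so that $L_f$ is presented by a $q$-polynomial whose exponents are the images of those of $f$ under $i\mapsto n-i$ (reduced modulo $n$), with offset $q^{n-h}$. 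Since $i\mapsto n-i$ is injective on the support modulo $n$, the polynomial $\bar{\bar f}$ has as many nonzero terms as $f$, hence is not a monomial. The entire proof then reduces to reading off the parameters $\ell,\ell_2,\ell_3,d,h$ of Theorem~\ref{th:h<ell_dritto} for $\bar{\bar f}$ and checking that, after translation, its hypotheses and conclusion are exactly those of the present corollary.

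First I would set up the parameter dictionary. When $\ell\ne0$ every exponent of $f$ lies in $\{\ell,\dots,d\}\subseteq\{1,\dots,n-1\}$, so $i\mapsto n-i$ restricts to an order-reversing bijection onto $\{n-d,\dots,n-\ell\}$; thus $\bar{\bar f}$ has minimum degree $n-d$, maximum degree $n-\ell$, second-smallest degree $n-\ell_3$, and offset $n-h$ (note $h\ge d\ge1$ here). When $\ell=0$ the bottom term $a_0y$ wraps to exponent $0$, and the effective minimum, maximum, and second-smallest degrees become $0$, $n-\ell_2$, and $n-d$, with offset $0$ (forced by $h=0$). In either regime the coefficient of $\bar{\bar f}$ at the offset exponent equals $a_h^{q^n}=a_h$, as $a_h\in\F_{q^n}$. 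The hypotheses of Theorem~\ref{th:h<ell_dritto} for $\bar{\bar f}$ require the offset not to exceed the minimum degree, and, in the binomial boundary case of equality, require the offset coefficient to differ from $\beta$. The first condition reads $n-h\le n-d$, i.e.\ $h\ge d$, when $\ell\ne0$, and holds trivially when $\ell=0$; equality occurs exactly for $h=d$ (with $\ell\ne0$) or for $\ell=0,h=0$, which are precisely the two boundary cases in which the corollary assumes $a_h\ne\beta$. Hence every structural hypothesis transfers.

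It remains to match the numerical quantities. Substituting the dictionary into the definition of $m_h$ in Theorem~\ref{th:h<ell_dritto} yields $0$ if $a_h\ne\beta$; $(n-d)-(n-h)=h-d$ if $a_h=\beta=0$ (which forces $\ell\ne0$, since $\ell=0$ implies $a_0\ne0$); $(n-\ell_3)-(n-h)=h-\ell_3$ if $a_h=\beta\ne0$ and $\ell\ne0$; and $(n-d)-0=n-d$ if $a_h=\beta\ne0$ and $\ell=0$. These four branches are exactly the definition of $\hat m_h$. Similarly the quantity ``$d-h$'' of the theorem becomes $(n-\ell)-(n-h)=h-\ell$ when $\ell\ne0$ and $(n-\ell_2)-0=n-\ell_2$ when $\ell=0$, so the two displayed bounds of the corollary are literally the single bound of Theorem~\ref{th:h<ell_dritto} evaluated for $\bar{\bar f}$. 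Invoking that theorem then produces an $\F_{q^n}$-rational affine point with nonzero coordinates on the relevant curve, equivalently a common point of $L_g$ and $L_f$. I expect the only genuine work to be this bookkeeping; the one place demanding care is the wrap-around $y^{q^n}=y$, which moves the effective top degree from $n-\ell$ to $n-\ell_2$ and the offset from $n-h$ to $0$ exactly when $\ell=0$ (respectively $h=0$), and correctly tracking these reductions is what accounts for the split into the two numerical bounds in the statement.
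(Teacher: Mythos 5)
Your proposal is correct and is exactly the argument the paper intends: the paper gives no explicit proof, merely stating that the corollary follows from Theorem \ref{th:h<ell_dritto} applied to the adjoint presentation of $L_f$ in Remark \ref{rem:aggiunto}, and your parameter dictionary (including the wrap-around handling of the $\ell=0$ and $h=0$ cases and the verification that the offset coefficient of $\bar{\bar f}$ equals $a_h$) carries out that translation correctly.
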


From Theorem \ref{th:h>ell} and Remark \ref{rem:aggiunto}, we have the following result.

\begin{corollary}\label{cor:h>elldual}
Assume that $f(y)$ is not a monomial and $h>0$. If $\ell\ne0$, assume $h<d$.
If $f(y)=a_d y^{q^d}+a_\ell y^{q^\ell}$ and one of the following two cases holds:
\begin{itemize}
    \item $\ell\ne0$ and $h=\ell$,
    \item $\ell=0$ and $h=d$,
\end{itemize}
then assume also $a_h\ne\beta$.
Define
\[ \hat{m}_h=\begin{cases} n-\min\{h,\ell\}, & \textrm{if}\; \quad\ell\ne0,\quad\textrm{and}\quad \textrm{either}\quad a_{h}\ne\beta\quad\textrm{ or }\quad\ell\ne h; \\ n-\min\{h,\ell_2\}, &  \textrm{if}\quad \ell=0,\quad\textrm{and}\quad \textrm{either}\quad  a_{h}\ne\beta\quad\textrm{or}\quad\ell_2\ne h; \\
n-\ell_2,&\textrm{if}\quad \ell\ne0\quad \textrm{and}\quad a_h=\beta\quad \textrm{and}\quad h=\ell. \\
\end{cases}
\]
If either $\ell\ne 0$ and $k+\hat{m}_h+d\leq\frac{3n}{2}$, or $\ell=0$ and $k+\hat{m}_h\leq\frac{n}{2}$, then $L_g\cap L_f\ne\emptyset$.
\end{corollary}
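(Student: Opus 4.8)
The plan is to deduce this corollary mechanically from Theorem \ref{th:h>ell}, applied not to $f(y)$ directly but to the alternative representation of $L_f$ provided by the adjoint in Remark \ref{rem:aggiunto}. By that remark, $L_f=\{\langle(y^{q^{n-h}},\bar{\bar f}(y))\rangle_{\F_{q^n}}\colon y\in\F_{q^n}^*\}$ with $\bar{\bar f}(y)=\sum_{i=\ell}^d a_i^{q^{n+h-i}}y^{q^{n-i}}$, so that the roles played by $h,\ell,d$ in the hypotheses of Theorem \ref{th:h>ell} are now taken over by the exponent data of $\bar{\bar f}$. First I would record these: writing $h',\ell',d'$ for the parameters relevant to $\bar{\bar f}$, one has $h'=n-h$ (this uses $h>0$); and, after reducing $y^{q^n}\equiv y$ when $\ell=0$, one gets $\ell'=n-d$, $d'=n-\ell$ if $\ell\neq0$, while $\ell'=0$, $d'=n-\ell_2$ if $\ell=0$.

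Second, I would verify that the standing hypotheses of Theorem \ref{th:h>ell} hold for $\bar{\bar f}$. The inequality $h'>\ell'$ reads $d>h$ when $\ell\neq0$ (matching the assumption $h<d$) and $n-h>0$ when $\ell=0$ (automatic from $h>0$). For the binomial exceptional hypothesis, the equality $d'=h'$ amounts to $h=\ell$ (if $\ell\neq0$) or $h=\ell_2=d$ (if $\ell=0$); in either situation the Frobenius exponent $n+h-\ell$ (resp. $n+h-\ell_2$) is a multiple of $n$, so the top coefficient of $\bar{\bar f}$ equals $a_h$, and the required condition "$a'_{d'}\neq\beta$" is exactly the assumed $a_h\neq\beta$.

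Third, I would compute the quantity "$m_h$" of Theorem \ref{th:h>ell} for $\bar{\bar f}$, call it $m'_{h'}$, and check that it coincides with $\hat m_h$. In the maximal case one obtains $m'_{h'}=\max\{d',h'\}=n-\min\{h,\ell\}$ (if $\ell\neq0$) or $n-\min\{h,\ell_2\}$ (if $\ell=0$), recovering the first two branches of $\hat m_h$; here the coefficient condition "$a'_{d'}\neq\beta$" again collapses to $a_h\neq\beta$ precisely because $d'=h'$ forces the Frobenius twist on the top coefficient to be trivial. In the remaining case $a'_{d'}=\beta$ with $d'=h'$, which forces $\ell\neq0$, $h=\ell$ and $a_h=\beta$, the quantity $\ell_3'$ is the second-largest exponent of $\bar{\bar f}$, namely $n-\ell_2$, giving the third branch $\hat m_h=n-\ell_2$.

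Finally, I would translate the numerical conclusion: Theorem \ref{th:h>ell} requires $k+m'_{h'}-\ell'\leq n/2$, and substituting $m'_{h'}=\hat m_h$ together with $\ell'=n-d$ (case $\ell\neq0$) or $\ell'=0$ (case $\ell=0$) rearranges exactly into $k+\hat m_h+d\leq 3n/2$ or $k+\hat m_h\leq n/2$, respectively, after which the conclusion $L_g\cap L_f\neq\emptyset$ is inherited verbatim. I expect the only delicate point to be this bookkeeping under the adjoint: tracking how the index data $\ell,\ell_2,d$ of $f$ map to the exponent data of $\bar{\bar f}$ (in particular the wrap-around $y^{q^n}\equiv y$ when $\ell=0$), and confirming that the comparisons of coefficients with $\beta$ are unaffected by Frobenius exactly in the boundary cases $h=\ell$ and $h=\ell_2$. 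The residual configuration $\ell=0$, $a_h=\beta$, $h=\ell_2$ lies outside all three branches of $\hat m_h$ and is simply not claimed.
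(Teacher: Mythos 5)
Your proposal is correct and is exactly the argument the paper intends: the paper gives no written proof, stating only that the corollary follows from Theorem \ref{th:h>ell} applied via the adjoint substitutions of Remark \ref{rem:aggiunto} ($h\mapsto n-h$, $d\mapsto n-\ell$ or $n-\ell_2$, $\ell\mapsto n-d$ or $0$), and your bookkeeping of these substitutions, of the trivialized Frobenius twist on the top coefficient in the boundary cases $h=\ell$ and $h=\ell_2$, and of the uncovered configuration $\ell=0$, $h=\ell_2$, $a_h=\beta$ is all accurate.
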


Theorem \ref{th:h<ell_girato} and Remark \ref{rem:aggiunto} yield the following result.

\begin{corollary}\label{cor:rovultimo}
Assume that $f(y)$ is not a monomial.
If $\ell\ne0$, assume $h\geq d$; if $\ell=0$, assume $h=0$.
Let
\[
\hat{m}_h=\begin{cases}
0 & \textrm{if} \quad \beta= 0, \\
h-\ell & \textrm{if}\quad \ell\ne 0 \quad \textrm{and}\quad \beta a_h\ne 1, \\
n-\ell_2 & \textrm{if}\quad \ell= 0 \quad \textrm{and}\quad \beta a_h\ne 1, \\
\ell_3-\ell & \textrm{if}\quad \ell \ne 0 \quad\textrm{and}\quad\beta a_h=1, \\
d-\ell_2 & \textrm{if}\quad \ell = 0 \quad\textrm{and}\quad\beta a_h=1. \\
\end{cases}
\]
If either $\ell\ne0$ and $k+\max\{\hat{m}_h,d-\ell\}+1\leq n/2$, or $\ell=0$ and $k+\max\{\hat{m}_h,n-\ell_2\}+1\leq n/2$, then $L_g\cap \sigma(L_f)\neq \emptyset$.
\end{corollary}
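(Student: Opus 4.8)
The plan is to apply Theorem \ref{th:h<ell_girato} not to $f(y)$ itself but to the adjoint rewriting $\bar{\bar{f}}(y)$ of $f$, using the description of $\sigma(L_f)$ recorded in Remark \ref{rem:aggiunto}. By \eqref{eq:vedimeglio2} we have
\[ \sigma(L_f)=\left\{\left\langle\left(\bar{\bar{f}}(y),y^{q^{n-h}}\right)\right\rangle_{\F_{q^n}}\colon y\in\F_{q^n}^*\right\},\qquad \bar{\bar{f}}(y)=\sum_{i=\ell}^{d}a_i^{q^{n+h-i}}y^{q^{n-i}}, \]
so that $\sigma(L_f)$ has exactly the shape to which Theorem \ref{th:h<ell_girato} applies, now with $f$ replaced by $\bar{\bar{f}}$ and with the shift $h$ replaced by $h^\prime$, where $h^\prime=n-h$ if $h>0$ and $h^\prime=0$ if $h=0$. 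The proof then reduces to reading off the parameters of $\bar{\bar{f}}$, checking the hypotheses of Theorem \ref{th:h<ell_girato}, and transporting its numerical condition back to the present notation.

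First I would record the degree data of $\bar{\bar{f}}$. Since $f$ is not a monomial and $\ell\le d\le n-1$, the exponents $n-i$ appearing in $\bar{\bar{f}}$ all lie in $\{1,\dots,n-1\}$, with the single exception of $i=0$ (possible only when $\ell=0$), which contributes a degree-$0$ term. Hence the lowest and highest $q$-degrees $\ell^\prime,d^\prime$ of $\bar{\bar{f}}$, its second-lowest degree $\ell_2^\prime$, and the coefficient $a^{\prime}_{h^\prime}$ of $\bar{\bar{f}}$ at degree $h^\prime$ are
\[ (\ell^\prime,d^\prime,\ell_2^\prime)=(n-d,\,n-\ell,\,n-\ell_3),\quad a^{\prime}_{h^\prime}=a_h\qquad(\ell\ne0), \]
\[ (\ell^\prime,d^\prime,\ell_2^\prime)=(0,\,n-\ell_2,\,n-d),\quad a^{\prime}_{h^\prime}=a_0=a_h\qquad(\ell=0). \]
The equality $a^{\prime}_{h^\prime}=a_h$ is the key point of the bookkeeping: the degree-$h^\prime$ term of $\bar{\bar{f}}$ comes from $i=h$, with coefficient $a_h^{q^n}=a_h$ when $h\in\{\ell,\dots,d\}$ and with $a_h=0=a^{\prime}_{h^\prime}$ otherwise; in particular $\beta a^{\prime}_{h^\prime}=1$ if and only if $\beta a_h=1$. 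Moreover the hypothesis $h^\prime\le\ell^\prime$ required by Theorem \ref{th:h<ell_girato} reads $h\ge d$ when $\ell\ne0$ and $h=0$ when $\ell=0$, which are precisely the standing assumptions of the corollary, and $\bar{\bar{f}}$ is again not a monomial.

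It then remains to apply Theorem \ref{th:h<ell_girato} to $\bar{\bar{f}}$ and rewrite its conclusion. The theorem yields $L_g\cap\sigma(L_f)\ne\emptyset$ as soon as $k+d^\prime-\min\{m_{h^\prime},\ell^\prime\}+1\le n/2$, where $m_{h^\prime}$ equals $\ell_2^\prime$, $h^\prime$, or $d^\prime$ according to $\beta a^{\prime}_{h^\prime}=1$, to $\beta\ne0$ with $\beta a^{\prime}_{h^\prime}\ne1$, or to $\beta=0$. Writing $d^\prime-\min\{m_{h^\prime},\ell^\prime\}=\max\{d^\prime-m_{h^\prime},\,d^\prime-\ell^\prime\}$, one has $d^\prime-\ell^\prime=d-\ell$ if $\ell\ne0$ and $d^\prime-\ell^\prime=n-\ell_2$ if $\ell=0$, while a branch-by-branch computation using the table above shows that $d^\prime-m_{h^\prime}$ equals exactly the quantity $\hat{m}_h$ of the statement. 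Substituting gives the two displayed inequalities $k+\max\{\hat{m}_h,d-\ell\}+1\le n/2$ and $k+\max\{\hat{m}_h,n-\ell_2\}+1\le n/2$, completing the argument.

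The only genuine obstacle is this final translation: one must verify, in each of the five branches defining $\hat{m}_h$, that the value of $m_{h^\prime}$ produced by Theorem \ref{th:h<ell_girato} for $\bar{\bar{f}}$ transports to $\hat{m}_h=d^\prime-m_{h^\prime}$ under the substitutions $h\mapsto h^\prime$, $\ell\mapsto\ell^\prime$, $d\mapsto d^\prime$, together with $\ell_2^\prime=n-\ell_3$ (resp.\ $\ell_2^\prime=n-d$) prescribed by Remark \ref{rem:aggiunto}. The places where care is needed are the $\ell=0$ wraparound, in which the exponent $n$ must be read as $0$, and the identification $a^{\prime}_{h^\prime}=a_h$, which is what keeps the discriminating condition $\beta a_h=1$ meaningful after passing to the adjoint.
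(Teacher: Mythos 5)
Your proposal is correct and follows exactly the route the paper intends: the paper proves Corollary \ref{cor:rovultimo} precisely by applying Theorem \ref{th:h<ell_girato} to the adjoint form $\bar{\bar{f}}$ of \eqref{eq:vedimeglio2} with the parameter substitutions of Remark \ref{rem:aggiunto}, and your branch-by-branch verification that $d^\prime-\ell^\prime$ and $d^\prime-m_{h^\prime}$ reproduce $d-\ell$ (resp.\ $n-\ell_2$) and $\hat{m}_h$ checks out in all cases. The only detail worth noting is that the bookkeeping $a^{\prime}_{h^\prime}=a_h$, including the degenerate case $h>d$ where both sides vanish, is handled correctly.
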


Finally, the following result is a consequence of Remark \ref{rem:aggiunto} applied to Theorem \ref{th:h>lrib}.

\begin{corollary}\label{cor:ultimissimo}
Assume that $f(y)$ is not a monomial and $h>0$. If $\ell\ne0$, assume $h<d$.
Let
\[
t=\begin{cases}
n,&\textrm{if}\quad f(y)\quad\textrm{is a binomial}, \\
\min\{i>\ell_2\colon a_i\ne0\},&\textrm{otherwise};
\end{cases}\]
\[
\hat{m}_h=\begin{cases}
d,&\textrm{if}\quad\ell\ne0\quad\textrm{and}\quad\beta=0,\\
n,&\textrm{if}\quad\ell=0\quad\textrm{and}\quad\beta=0,\\
\min\{\ell,h\},&\textrm{if}\quad\ell\ne0\quad\textrm{and}\quad\beta\ne0\quad\textrm{and either}\quad h\ne\ell\quad\textrm{or}\quad\beta a_h\ne1,\\
\min\{\ell_2,h\},&\textrm{if}\quad\ell=0\quad\textrm{and}\quad\beta\ne0\quad\textrm{and either}\quad h\ne\ell_2\quad\textrm{or}\quad\beta a_h\ne1,\\
\ell_2,&\textrm{if}\quad\ell\ne0\quad\textrm{and}\quad h=\ell\quad\textrm{and}\quad \beta a_h=1,\\
t,&\textrm{if}\quad\ell=0\quad\textrm{and}\quad h=\ell_2\quad\textrm{and}\quad \beta a_h=1.\\
\end{cases}
\]
If either $\ell\ne0$ and $k+d-\min\{\hat{m}_h,\ell\}+1\leq n/2$, or $\ell=0$ and $k+n-\min\{\hat{m}_h,\ell_2\}+1\leq n/2$, then $L_g\cap\sigma(L_f)\ne\emptyset$.
\end{corollary}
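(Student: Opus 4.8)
The plan is to derive the statement from Theorem \ref{th:h>lrib} by applying that result not to $f$ but to the adjoint representation of $\sigma(L_f)$ furnished by Remark \ref{rem:aggiunto}. By \eqref{eq:vedimeglio2},
\[ \sigma(L_f)=\left\{\left\langle\left(\bar{\bar{f}}(y),y^{q^{n-h}}\right)\right\rangle_{\F_{q^n}}\colon y\in\F_{q^n}^*\right\},\qquad \bar{\bar{f}}(y)=\sum_{i=\ell}^{d}a_i^{q^{n+h-i}}y^{q^{n-i}}, \]
so that $\sigma(L_f)$ has exactly the shape governed by Theorem \ref{th:h>lrib}, once $\bar{\bar{f}}$ plays the role of the linearized polynomial and $n-h$ plays the role of the exponent parameter. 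First I would record how the invariants $\ell,\ell_2,\ell_3,d,h,a_h$ transform. Since $f$ is not a monomial, neither is $\bar{\bar{f}}$, and $f$ is a binomial exactly when $\bar{\bar{f}}$ is. Reducing exponents modulo $n$ (so that the contribution $n-0$ of the term $i=0$ becomes the degree $0$ when $\ell=0$), the $q$-degrees of $\bar{\bar{f}}$ are $\{n-i\colon a_i\ne0\}$; hence its minimal and maximal $q$-degrees are $\ell'=n-d$, $d'=n-\ell$ if $\ell\ne0$, and $\ell'=0$, $d'=n-\ell_2$ if $\ell=0$, while the new exponent parameter is $h'=n-h$. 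The coefficient $a'_{h'}$ of $y^{q^{h'}}$ in $\bar{\bar{f}}$ equals $a_h^{q^n}=a_h$, so $h'=d'$ means $h=\ell$ (if $\ell\ne0$) or $h=\ell_2$ (if $\ell=0$), and the condition $\beta a'_{h'}=1$ is $\beta a_h=1$.

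Next I would verify the hypotheses of Theorem \ref{th:h>lrib} applied to $\bar{\bar{f}}$, namely $h'>\ell'$: when $\ell\ne0$ this is $n-h>n-d$, i.e. $h<d$, which is assumed, and when $\ell=0$ it is $n-h>0$, which holds because $0<h<n$. With the translated invariants, the parameter $m_{h'}$ attached to $\bar{\bar{f}}$ by Theorem \ref{th:h>lrib} satisfies $m_{h'}=n-\hat m_h$ in every case. Indeed, for $\beta=0$ we get $m_{h'}=\ell'$, whence $n-\ell'$ equals $d$ (if $\ell\ne0$) or $n$ (if $\ell=0$); for $\beta\ne0$ with $h'\ne d'$ or $\beta a_h\ne1$ we get $m_{h'}=\max\{d',h'\}$, which equals $n-\min\{\ell,h\}$ if $\ell\ne0$ and $n-\min\{\ell_2,h\}$ if $\ell=0$, so that $n-m_{h'}=\hat m_h$; and for $h'=d'$ with $\beta a_h=1$ we get $m_{h'}=\ell_3'$, the second largest $q$-degree of $\bar{\bar{f}}$, which equals $n-\ell_2$ (if $\ell\ne0$) or $n-t$ (if $\ell=0$), with $t$ as in the statement and the binomial convention $\ell_3'=\ell'$; these yield $\hat m_h=\ell_2$ and $\hat m_h=t$, respectively.

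Finally I would translate the numerical condition. Theorem \ref{th:h>lrib} requires $k+\max\{m_{h'},d'\}-\ell'+1\leq n/2$. Substituting $m_{h'}=n-\hat m_h$ together with the values of $d'$ and $\ell'$ above, a direct computation gives
\[ k+\max\{m_{h'},d'\}-\ell'+1=k+d-\min\{\hat m_h,\ell\}+1 \]
when $\ell\ne0$, and
\[ k+\max\{m_{h'},d'\}-\ell'+1=k+n-\min\{\hat m_h,\ell_2\}+1 \]
when $\ell=0$; these are precisely the two hypotheses of the corollary. Hence the assumed inequality forces the hypothesis of Theorem \ref{th:h>lrib} for $\bar{\bar{f}}$, and $L_g\cap\sigma(L_f)\ne\emptyset$ follows. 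The only real work lies in the bookkeeping of the three defining cases of $m_{h'}$ and $\hat m_h$, and in the reduction modulo $n$ of the degree of the $i=0$ term when $\ell=0$; no fresh curve-theoretic input is needed, since Theorem \ref{th:h>lrib} already carries the Kummer-extension computation and the Hasse--Weil estimate.
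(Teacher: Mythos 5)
Your proposal is correct and follows exactly the route the paper intends: the paper states that Corollary \ref{cor:ultimissimo} is obtained by applying Remark \ref{rem:aggiunto} (i.e.\ the adjoint form \eqref{eq:vedimeglio2} of $\sigma(L_f)$, with $h\mapsto n-h$, $d\mapsto n-\ell$ or $n-\ell_2$, $\ell\mapsto n-d$ or $0$) to Theorem \ref{th:h>lrib}, and your case-by-case verification that $m_{h'}=n-\hat m_h$ and that the numerical bound transforms into the stated one is precisely the bookkeeping the paper leaves implicit.
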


\section{Asymptotic results on semifields}\label{sec:semifields}

A \emph{finite semifield} $(\mathcal{S},+,\circ)$ is a finite division algebra except that associativity of multiplication is not assumed. More precisely,
\begin{itemize}
    \item [(S1)] $(\mathcal{S},+)$ is a group with identity element $0$;
    \item [(S2)] $x\circ (y+z)= x\circ y + x \circ z$ and $(x+y)\circ z=x \circ z+ y \circ z$, for all $x,y,z \in \mathcal{S}$;
    \item [(S3)] $x\circ y=0$ implies $x=0$ or $y=0$;
    \item [(S4)] there exists $1 \in \mathcal{S}$ such that $1\circ x= x \circ 1=x$, for all $x \in \mathcal{S}$.
\end{itemize}
If (S4) does not hold, the structure $(\mathcal{S},+,\circ)$ is known as \emph{presemifield}.
The first examples of proper finite semifields were presented by Dickson in 1906 in \cite{Dickson}.
An essential notion in semifield theory is that of isotopism.
Two semifields $(\mathcal{S},+,\circ)$ and $(\mathcal{S}',+,\circ')$ are called \emph{isotopic} if there exist three non-singular linear transformations $F,G$ and $H$ from $\mathcal{S}$ to $\mathcal{S}'$ such that
\[ F(x) \circ' G(y) = H(x \circ y), \]
for all $x,y \in \mathcal{S}$. The triple $(F,G,H)$ is called an \emph{isotopism}.
The set of semifields isotopic to a semifield $\mathcal{S}$ is called the \emph{isotopism class} of $\mathcal{S}$.
The interest for the theory of semifields has increased because they appear in several areas of mathematics, such as finite geometry, difference sets, coding theory, cryptography and group theory. We refer to \cite{Kantor,LavPol} for further details on finite semifields and their applications.

\smallskip

In \cite{BallEL} Ball, Ebert and Lavrauw present a geometric construction of a finite semifield by considering a certain configuration of two subspaces with respect to a Desarguesian spread in a finite-dimensional vector space over a finite field.
They prove that every finite semifield can be obtained in this way. This construction was further investigated by Lavrauw and Sheekey in \cite{LavSheekey}, where they also give the notion of \emph{BEL-rank} of a semifield.
In particular, we are interested in those semifields having BEL-rank two that can be defined as follows.
We say that a finite semifield $(\mathcal{S},+,\circ)$ has \emph{BEL-rank two} if there exist two linearized polynomials $L_1(x)$ and $L_2(x)$ in $\tilde{\mathcal{L}}_{n,q}$ such that
\[ x \circ y =L_1(x)L_2(y)-xy \]
defines a presemifield $(\mathcal{S}_{L_1,L_2},+,\circ)$ isotopic to $(\mathcal{S},+,\circ)$.

\smallskip

Let $L_1(x)$ be a fixed linearized polynomial of $\tilde{\mathcal{L}}_{n,q}$.
Does there exist a linearized polynomial $L_2(x)$ in $\tilde{\mathcal{L}}_{n,q}$ such that $(\mathcal{S}_{L_1,L_2},+,\circ)$ is a semifield?
If $(\mathcal{S}_{L_1,L_2},+,\circ)$ is a semifield then
\[ L_1(x)L_2(y)-xy\neq 0 \]
for each $x,y \in \F_{q^n}^*$, or equivalently
the curve with equation
\[ \frac{L_2(Y)}{Y}=\frac{X}{L_1(X)} \]
does not have $\F_{q^n}$-rational affine points $(\overline{x},\overline{y})$ with $\overline{x},\overline{y}\neq 0$.
As noted in Section \ref{sec:intersections}, this is equivalent to the fact that the linear sets $\{\langle(L_1(x),x)\rangle_{\F_{q^n}}\colon x\in\F_{q^n}^*\}$ and $\{\langle (y,L_2(y)) \rangle_{\F_{q^n}} \colon y \in \F_{q^n}^* \}$ have a point in common.
Sheekey, Voloch and Van de Voorde in \cite[Corollary 6.5]{ShVVdV} give a condition on the degree of $L_2(x)$ when we choose $L_1(x)$ as the trace function,
ensuring that $(\mathcal{S}_{L_1,L_2},+,\circ)$ is not a semifield.
More precisely, they prove the following result.

\begin{theorem} {\rm \cite[Corollary 6.5]{ShVVdV}}\label{th:4.1}
If $(\mathcal{S}_{\mathrm{Tr}_{q^n/q},L_2},+,\circ)$ is a semifield, then $\deg L_2\geq q^{n/2-1}$.
\end{theorem}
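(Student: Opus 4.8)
The statement is equivalent to its contrapositive, so the plan is to show that if $\deg L_2<q^{n/2-1}$ then the two linear sets
\[
M_1=\{\langle(\mathrm{Tr}_{q^n/q}(x),x)\rangle_{\F_{q^n}}\colon x\in\F_{q^n}^*\},\qquad M_2=\{\langle(y,L_2(y))\rangle_{\F_{q^n}}\colon y\in\F_{q^n}^*\}
\]
meet, forcing $\mathrm{Tr}_{q^n/q}(\bar x)L_2(\bar y)-\bar x\bar y=0$ for some $\bar x,\bar y\in\F_{q^n}^*$, so that $(\mathcal{S}_{\mathrm{Tr}_{q^n/q},L_2},+,\circ)$ is not a semifield. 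Writing $L_2(y)=\sum_{i=0}^{d}a_iy^{q^i}$ with $a_d\ne0$, we have $\deg L_2=q^d$, and identifying the two projective representatives through a scalar shows that a common point with $\mathrm{Tr}_{q^n/q}(\bar x)\ne0$ corresponds exactly to a solution of $\bar x/\mathrm{Tr}_{q^n/q}(\bar x)=L_2(\bar y)/\bar y$; the points with $\mathrm{Tr}_{q^n/q}(\bar x)=0$ give only $\langle(0,1)\rangle\notin M_2$ and may be discarded. I expect the degenerate case $d=0$, in which $L_2$ is a scalar map and the construction collapses to a field, to be handled separately, and from now on I assume $d\geq1$.

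The core of the argument mirrors the proof of Theorem \ref{th:Cr1r2}. By Remark \ref{rk:Hilbert} with $r=1$, the elements $\bar x/\mathrm{Tr}_{q^n/q}(\bar x)$ are precisely the values $w^q-w+\gamma$ with $w\in\F_{q^n}$, for a fixed $\gamma$ with $\mathrm{Tr}_{q^n/q}(\gamma)=1$. Hence $M_1\cap M_2\ne\emptyset$ if and only if the curve
\[
\mathcal{C}\colon\quad w^q-w=\frac{L_2(y)}{y}-\gamma
\]
carries an $\F_{q^n}$-rational affine point with $y\ne0$. In the rational function field $\K(y)$ the function $F(y)=\sum_{i=0}^{d}a_iy^{q^i-1}-\gamma$ has a unique pole, at the place $P_\infty$ (the pole of $y$), of order $q^d-1$, which is coprime to $p$; it is regular at every other place, in particular at the zero of $y$. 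Applying Theorem \ref{th:artinschreier} with the separable $p$-polynomial $L(T)=T^q-T$ of degree $q$, with $m_{P_\infty}=q^d-1$ and $m_P=-1$ elsewhere, I conclude that $\mathcal{C}$ is absolutely irreducible of genus
\[
g(\mathcal{C})=\frac{q-1}{2}\bigl(-2+q^d\bigr)=\frac{(q-1)(q^d-2)}{2}.
\]

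Finally I would count the places that must be excluded. The pole of $y$ lies below a single totally ramified place of $\mathcal{C}$, while the zero of $y$ is unramified and lies below exactly $q$ places; every $\F_{q^n}$-rational place outside these $q+1$ places is centred at an affine point with $\bar y\ne0$, and there $w^q-w+\gamma$ has trace $1$, hence is nonzero, so it yields a genuine common point of $M_1$ and $M_2$ (one checks $\langle(\mathrm{Tr}_{q^n/q}(\bar x),\bar x)\rangle=\langle(\bar y,L_2(\bar y))\rangle$ for the associated $\bar x$). By the Hasse-Weil bound (Theorem \ref{th:hasseweil}),
\[
|\mathcal{C}(\F_{q^n})|\geq q^n+1-(q-1)(q^d-2)\sqrt{q^n},
\]
so it suffices to verify the numerical inequality $q^n-q>(q-1)(q^d-2)q^{n/2}$. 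This last estimate is where the stated exponent is pinned down and is the delicate point: since $(q-1)(q^d-2)<q^{d+1}$, the inequality holds once $q^d<q^{n/2-1}$ (equivalently $q^{d+1}<q^{n/2}$), and it must fail at the boundary $d=n/2-1$, where a semifield can exist; I would keep the lower-order terms $-q$, $-2$ and the factor $q-1$ in play to make the estimate tight exactly at $\deg L_2=q^{n/2-1}$, which gives the claimed bound.
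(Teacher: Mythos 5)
First, a caveat on the comparison itself: the paper does not prove this statement. Theorem \ref{th:4.1} is quoted from \cite[Corollary 6.5]{ShVVdV}, so there is no in-paper proof to measure your argument against. That said, your reconstruction is essentially correct in its main case and uses precisely the toolkit the paper deploys for its own analogous result, Theorem \ref{th:Cr1r2}: Remark \ref{rk:Hilbert} to replace $\bar x/\mathrm{Tr}_{q^n/q}(\bar x)$ by $w^q-w+\gamma$, Theorem \ref{th:artinschreier} applied to $w^q-w=L_2(y)/y-\gamma$ (the only place with $m_P>0$ is the pole of $y$, with $m_{P_\infty}=q^d-1$ coprime to $p$, yielding genus $\tfrac{(q-1)(q^d-2)}{2}$), and then Theorem \ref{th:hasseweil}. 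The count of excluded places is at most $q+1$ (not ``exactly $q$'' over the zero of $y$, but the overestimate only helps you), and the final inequality $q^n-q>(q-1)(q^d-2)q^{n/2}$ does hold whenever $d+1<n/2$, so for $d\geq1$ you correctly produce a common point of $M_1$ and $M_2$ and hence a zero divisor. This is the same contrapositive mechanism the paper itself uses to pass from Theorem \ref{th:Cr1r2} to Corollary \ref{cor:semifield_tracce}.

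The one genuine loose end is $d=0$, and the reason you give for dismissing it (``collapses to a field'') is wrong. For $L_2(y)=a_0y$ with $a_0\ne0$ one gets $x\circ y=(a_0\mathrm{Tr}_{q^n/q}(x)-x)y$, and if $\mathrm{Tr}_{q^n/q}(a_0)\ne1$ this has \emph{no} zero divisors: by Remark \ref{rk:Hilbert} the equation $x=a_0\mathrm{Tr}_{q^n/q}(x)$ has a nonzero solution exactly when $\mathrm{Tr}_{q^n/q}(a_0)=1$, i.e.\ $M_2=\{\langle(1,a_0)\rangle_{\F_{q^n}}\}$ meets $M_1$ exactly in that case. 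So the contrapositive route cannot possibly yield an intersection point here, and a presemifield with $\deg L_2=1<q^{n/2-1}$ really does exist for $n\geq3$. The statement survives only because this $\circ$ never admits a two-sided identity (from $x\circ e=x$ with $\mathrm{Tr}_{q^n/q}(x)=0$ one forces $e=-1$ and then $a_0=0$), so $(\mathcal{S}_{\mathrm{Tr}_{q^n/q},a_0y},+,\circ)$ is never a semifield in the sense of (S1)--(S4). You must either make this argument explicitly or restrict the claim to $\deg L_2>1$; as written, the $d=0$ case is a gap.
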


As a direct consequence of Theorem \ref{th:Cr1r2} we obtain Corollary \ref{cor:semifield_tracce}.

\begin{corollary}\label{cor:semifield_tracce}
Let $r_1,r_2,n$ be three positive integers such that $r_1,r_2 \mid n$.
If $(\mathcal{S}_{\mathrm{Tr}_{q^n/q^{r_1}},\alpha \mathrm{Tr}_{q^n/q^{r_2}}},+,\circ)$ is a semifield, then $r_1+r_2>n/2-1$.
\end{corollary}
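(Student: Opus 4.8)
The plan is to recognize the semifield hypothesis as the emptiness of the intersection $L_{r_1}\cap\sigma(L_{r_2})$ analysed in Theorem \ref{th:Cr1r2}, and then to invoke that theorem contrapositively. Set $L_1(x)=\mathrm{Tr}_{q^n/q^{r_1}}(x)$ and $L_2(y)=\alpha\,\mathrm{Tr}_{q^n/q^{r_2}}(y)$. By the discussion opening Section \ref{sec:semifields}, if $(\mathcal{S}_{L_1,L_2},+,\circ)$ is a semifield then $L_1(x)L_2(y)\ne xy$ for every $x,y\in\F_{q^n}^*$. Any pair $x,y\in\F_{q^n}^*$ with $L_1(x)L_2(y)=xy$ must satisfy $\mathrm{Tr}_{q^n/q^{r_1}}(x)\ne0$ and $\mathrm{Tr}_{q^n/q^{r_2}}(y)\ne0$, and then $\frac{x}{\mathrm{Tr}_{q^n/q^{r_1}}(x)}\cdot\frac{y}{\mathrm{Tr}_{q^n/q^{r_2}}(y)}=\alpha$; by the paragraph preceding Theorem \ref{th:Cr1r2} this is exactly the condition for $L_{r_1}$ and $\sigma(L_{r_2})$ to share a point. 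Hence $(\mathcal{S}_{L_1,L_2},+,\circ)$ being a semifield is equivalent to $L_{r_1}\cap\sigma(L_{r_2})=\emptyset$. (Equivalently, the two linear sets $\{\langle(L_1(x),x)\rangle_{\F_{q^n}}\}$ and $\{\langle(y,L_2(y))\rangle_{\F_{q^n}}\}$ of Section \ref{sec:semifields} are $\sigma(L_{r_1})$ and $L_{r_2}$, and $\sigma$ is an involution.)

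With this dictionary in hand, the conclusion is immediate: Theorem \ref{th:Cr1r2} asserts that $\frac{n}{2}\ge r_1+r_2+1$ forces $L_{r_1}\cap\sigma(L_{r_2})\ne\emptyset$, so its contrapositive turns $L_{r_1}\cap\sigma(L_{r_2})=\emptyset$ into $\frac{n}{2}<r_1+r_2+1$, that is $r_1+r_2>\frac{n}{2}-1$.

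The one point I would verify with care---and the only real content beyond the routine translation above---is that the corollary assumes neither $(r_1,r_2)=1$ nor any reduction, whereas coprimality was a standing convention in Section \ref{sec:clubs} via Remark \ref{rk:d=1}. This causes no trouble, because the proof of Theorem \ref{th:Cr1r2} does not use coprimality at any step: the Artin--Schreier computation giving $g(\mathcal{C}_{r_1,r_2})=(q^{r_1}-1)(q^{r_2}-1)$ and the ensuing Hasse--Weil bound hold verbatim for arbitrary $r_1,r_2\mid n$. Applying the theorem directly to the given $r_1,r_2$ yields the stated threshold $r_1+r_2>\frac{n}{2}-1$; passing instead through the coprime reduction of Remark \ref{rk:d=1} would only produce the weaker inequality $r_1+r_2>\frac{n}{2}-(r_1,r_2)$, so it is essential to use the full strength of Theorem \ref{th:Cr1r2} rather than its coprime specialization.
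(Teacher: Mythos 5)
Your proof is correct and is precisely the argument the paper intends: the paper states Corollary \ref{cor:semifield_tracce} without proof, as a ``direct consequence'' of Theorem \ref{th:Cr1r2}, and your translation of the semifield (no zero divisors) condition into $L_{r_1}\cap\sigma(L_{r_2})=\emptyset$ followed by the contrapositive of that theorem is exactly that consequence. Your further observation that the proof of Theorem \ref{th:Cr1r2} nowhere uses $(r_1,r_2)=1$, so that the theorem may be applied directly to arbitrary $r_1,r_2\mid n$ rather than through the reduction of Remark \ref{rk:d=1} (which would only give the weaker bound $r_1+r_2>\frac{n}{2}-(r_1,r_2)$), is a correct and worthwhile clarification of a point the paper leaves implicit.
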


Let $r$ be a divisor of $n$ and suppose that $\mathcal{S}_{\mathrm{Tr}_{q^n/q},\mathrm{Tr}_{q^n/q^r}}$ is a semifield;
$\mathcal{S}$ is as in the assumption of Theorem \ref{th:4.1} when $L_2(y)=\mathrm{Tr}_{q^n/q^r}(y)$, and as in the assumption of Corollary when $r_1=1$, $r_2=r$, and $\alpha=1$.
Then the claim of Theorem \ref{th:4.1} provides $r\leq n/2+1$, whereas the claim of Corollary \ref{cor:semifield_tracce} provides $r>n/2-2$.
Therefore, if $\mathcal{S}_{\mathrm{Tr}_{q^n/q},\mathrm{Tr}_{q^n/q^r}}$ is a semifield, then either $n\geq8$ is even and $r=n/2$,
or $(n,r)$ is one of the pairs $(2,2)$, $(4,1)$, $(4,2)$, $(6,2)$, $(6,3)$, $(3,1)$, $(5,1)$, $(9,3)$.
We do not know whether these cases define a semifield or not.



\medskip

Let $f(x),g(y)\in\tilde{\mathcal{L}}_{n,q}$, where $f(x)=\sum_{i=0}^d a_i x^{q^i}$ with $a_d\ne0$, $g(y)=\alpha y^{q^k}+\beta y$ with $\alpha\ne0$, $k\geq1$.
As in Section \ref{sec:binomial}, define $\ell=\min\{i\colon a_i\ne0\}$.
If $f(x)$ is not a monomial, define $\ell_2=\{i>\ell\colon a_i\ne0\}$ and $\ell_3=\max\{i<d\colon a_i\ne0\}$;
as in Theorem \ref{th:h<ell_girato} and Corollary \ref{cor:rovultimo}, let
\[
m_0=\begin{cases}
\ell_2, & \textrm{if}\quad \beta a_0=1, \\ h, & \textrm{if}\quad \beta\ne0,\;\beta a_0\ne1, \\
d, & \textrm{if}\quad \beta=0;
\end{cases}
\quad\hat{m}_0=\begin{cases}
0, & \textrm{if} \quad \beta= 0, \\
n-\ell_2, & \textrm{if}\quad \beta a_0\ne 1, \\
d-\ell_2, & \textrm{if}\quad \beta a_0=1. \\
\end{cases}
\]

\begin{corollary}
Suppose that $(\mathcal{S}_{f,g},+,\circ)$ is a semifield.
\begin{itemize}
    \item If $f(x)$ is a monomial, and either $d=0$ or $\beta=0$, then $\N_{q^n/q^e}\left(\frac{1-\beta a_d}{\alpha a_d}\right)\ne1$, where $e=(n,k,d-h)$.
    \item If $f(x)$ is a monomial, $d\ne0$ and $\beta\ne0$, then $k+d>n/2$.
    \item If $f(x)$ is not a monomial, then $k+d-\min\{m_0,\ell\}+1>n/2$.
    \item If $f(x)$ is not a monomial and $\ell=0$, then $k+\max\{\hat{m}_0,n-\ell_2\}+1>n/2$.
\end{itemize}
\end{corollary}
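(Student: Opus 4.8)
The plan is to reduce the statement to the results already established for $L_g\cap\sigma(L_f)$ in Section~\ref{sec:binomial}, read in contrapositive form and specialized to $h=0$. First I would make explicit the dictionary between semifields and linear sets. With $L_1=f$ and $L_2=g$, axiom (S3) for $(\mathcal{S}_{f,g},+,\circ)$ says precisely that $f(\bar{x})g(\bar{y})-\bar{x}\bar{y}\ne0$ for all $\bar{x},\bar{y}\in\F_{q^n}^*$. As recalled at the beginning of Section~\ref{sec:semifields}, such a forbidden pair $(\bar{x},\bar{y})$ exists if and only if the curve $\frac{g(X)}{X}=\frac{Y}{f(Y)}$, that is $\mathcal{X}_{g,f}^{0}$, has an affine $\F_{q^n}$-rational point with nonzero coordinates, equivalently $L_g\cap\sigma(L_f)\ne\emptyset$, where $\sigma(L_f)=\{\langle(f(y),y)\rangle_{\F_{q^n}}\colon y\in\F_{q^n}^*\}$ is taken with $h=0$. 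Since $\ell=\min\{i\colon a_i\ne0\}$, here one has $\bar{f}=f$, so $\mathcal{X}_{g,f}^0=\mathcal{X}_{g,\bar f}^0$. Hence, if $(\mathcal{S}_{f,g},+,\circ)$ is a semifield, then in particular (S3) holds and $L_g\cap\sigma(L_f)=\emptyset$.

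Second, I would invoke the contrapositive of the appropriate result from Section~\ref{sec:binomial} in each of the four cases. If $f$ is a monomial with $d=0$ or $\beta=0$, the first bullet of Proposition~\ref{prop:pointinv} states that $L_g\cap\sigma(L_f)\ne\emptyset$ if and only if $\N_{q^n/q^e}\big(\frac{1-\beta a_d}{\alpha a_d}\big)=1$ with $e=(n,k,d)$, the value $(n,k,d-h)$ at $h=0$; emptiness of the intersection then forces $\N_{q^n/q^e}\big(\frac{1-\beta a_d}{\alpha a_d}\big)\ne1$. If $f$ is a monomial with $d\ne0$ and $\beta\ne0$, the second bullet of Proposition~\ref{prop:pointinv} gives a common point whenever $k+d\le n/2$, so emptiness forces $k+d>n/2$. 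If $f$ is not a monomial, then $h=0\le\ell$ and Theorem~\ref{th:h<ell_girato} applies: its hypothesis $k+d-\min\{m_0,\ell\}+1\le n/2$ would produce a common point, so we must have $k+d-\min\{m_0,\ell\}+1>n/2$. Finally, if $f$ is not a monomial and $\ell=0$, then $h=0$ and the $\ell=0$ branch of Corollary~\ref{cor:rovultimo} applies, yielding $k+\max\{\hat{m}_0,n-\ell_2\}+1>n/2$ by the same contrapositive argument.

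The only point requiring attention is the parameter bookkeeping, and this is where I would be careful rather than where any real difficulty lies. I would check that $m_0$ and $\hat{m}_0$ as defined before the statement coincide with $m_h$ of Theorem~\ref{th:h<ell_girato} and $\hat{m}_h$ of Corollary~\ref{cor:rovultimo} once $h=0$ is substituted (and, for $\hat{m}_0$, also $\ell=0$); in particular the entry written ``$h$'' in the middle line of the definition of $m_0$ is just the value $h=0$. I would also verify the orientation of the curve, namely that $\sigma(L_f)$ is exactly the first of the two semifield linear sets $\{\langle(f(x),x)\rangle_{\F_{q^n}}\}$ while $L_g$ is the second, so that the relevant curve is $\mathcal{X}_{g,f}^0$ and not $\mathcal{C}_{g,f}$. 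With these identifications in place, each of the four assertions is literally the contrapositive of an already proved sufficient condition for a common point, and no further computation is required.
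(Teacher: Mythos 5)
Your proposal is correct and takes essentially the same route as the paper: the paper's proof is a one-line citation of Proposition \ref{prop:pointinv}, Theorem \ref{th:h<ell_girato} and Corollary \ref{cor:rovultimo}, read (as you do) in contrapositive form via the equivalence between axiom (S3) for $\mathcal{S}_{f,g}$ and $L_g\cap\sigma(L_f)=\emptyset$ at $h=0$. Your additional bookkeeping (identifying $m_0$, $\hat{m}_0$ with $m_h$, $\hat{m}_h$ at $h=0$, and checking the orientation of the curve $\mathcal{X}_{g,f}^0$) is accurate and merely makes explicit what the paper leaves implicit.
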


\begin{proof}
The first and the second point of the claim follow from Proposition \ref{prop:pointinv}, the third one follows from Theorem \ref{th:h<ell_girato}, and the fourth one follows from Corollary \ref{cor:rovultimo}.
\end{proof}

Giovanni Zini and Ferdinando Zullo\\
Dipartimento di Matematica e Fisica,\\
Universit\`a degli Studi della Campania ``Luigi Vanvitelli'',\\
I--\,81100 Caserta, Italy\\
{{\em \{giovanni.zini,ferdinando.zullo\}@unicampania.it}}
\end{document}